\numberwithin{equation}{section}
\newtheoremstyle{mytheorem}
{}% measure of space to leave above the theorem. E.g.: 3pt
{}% measure of space to leave below the theorem. E.g.: 3pt
{\it}% name of font to use in the body of the theorem
{\parindent}% measure of space to indent
{\bf}% name of head font
{.}% punctuation between head and body
{ }% space after theorem head; " " = normal interword space
{\thmnumber{#2.~}\thmname{#1}\thmnote{~\rm#3}}% Manually specify head
\newtheoremstyle{myremark}
{}% measure of space to leave above the theorem. E.g.: 3pt
{}% measure of space to leave below the theorem. E.g.: 3ptontinuo dentro e fuori dall'infermeria, poi Barzagli e i due mesi di stop per l'infortunio alla spalla, infine a
{\rm}% name of font to use in the body of the theorem
{\parindent}% measure of space to indent
{\bf}% name of head font
{.}% punctuation between head and body
{ }% space after theorem head; " " = normal interword space
{\thmnumber{#2.~}\thmname{#1}\thmnote{~\rm#3}}% Manually specify head
\newtheoremstyle{myparagraph}
{}% measure of space to leave above the theorem. E.g.: 3pt
{}% measure of space to leave below the theorem. E.g.: 3pt
{\rm}% name of font to use in the body of the theorem
{\parindent}% measure of space to indent
{\bf}% name of head font
{.}% punctuation between head and body
{ }% space after theorem head; " " = normal interword space
{\thmnumber{#2.~}\thmname{#1}\thmnote{#3}}% Manually specify head
\theoremstyle{mytheorem}
\newtheorem{theorem}[subsection]{Theorem}
\newtheorem{definition}[subsection]{Definition}
\newtheorem{lemma}[subsection]{Lemma}
\newtheorem{corollary}[subsection]{Corollary}
\newtheorem{proposition}[subsection]{Proposition}
\theoremstyle{myremark}
\newtheorem{remark}[subsection]{Remark}
\theoremstyle{myparagraph}
\newtheorem*{parag*}{}
\def\@secnumfont{\sc}
\def\section{\@startsection{section}{1}%
\z@{1.5\linespacing\@plus .2\linespacing}{.7\linespacing}%
{\normalfont\sc\centering}}
\def\ps@headings{\ps@empty
 \def\@evenhead{%
  \setTrue{runhead}%
  \normalfont\footnotesize
  \rlap{\thepage}\hfil
  \def\thanks{\protect\thanks@warning}%
  \leftmark{}{}\hfil}%
 \def\@oddhead{%
  \setTrue{runhead}%
  \normalfont\footnotesize\hfil
  \def\thanks{\protect\thanks@warning}%
  \rightmark{}{}\hfil \llap{\thepage}}%
\let\@mkboth\markboth}
\renewenvironment{proof}[1][\proofname]{\par
  \pushQED{\qed}%
  \normalfont \topsep6\p@\@plus6\p@\relax
  \trivlist
  \itemindent\normalparindent
  \item[\hskip\labelsep
    \bfseries
    #1\@addpunct{.}]\ignorespaces
}{%
  \popQED\endtrivlist\@endpefalse
}
\providecommand{\proofname}{Proof}
\newcommand{\Mass}{\mathbb{M}}
\newcommand{\TP}{\textbf{TP}}
\newcommand{\OTP}{\textbf{OTP}}
\newcommand{\weak}{\overset{*}\rightharpoonup}
\newcommand{\R}{\mathbb{R}}
\newcommand{\E}{\mathcal{E}^\alpha}
\newcommand{\MM}{\mathbb{M}^\alpha}
\newcommand{\N}{\mathbb{N}}
\newcommand{\Haus}{\mathscr{H}}
\newcommand{\Leb}{\mathscr{L}}
\newcommand{\M}{\mathscr{M}_+}
\newcommand{\PP}{\mathscr{P}}
\newcommand{\PPP}{\mathbf{P}}
\newcommand{\QQQ}{\mathbf{Q}}
\newcommand{\conc}{{conc}}
\newcommand{\Lip}{\mathrm{Lip}_1}
\newcommand{\supp}{\mathrm{supp}}
\newcommand{\e}{\varepsilon}
\newcommand{\dV}{d_V\kern-1pt}
\newcommand{\trait}[3]{\vrule width #1ex height #2ex depth #3ex}
\newcommand{\trace}{\mathchoice%
  {\mathbin{\trait{.12}{1.2}{.03}\trait{.8}{0.09}{0.03}}}
  {\mathbin{\trait{.12}{1.2}{.03}\trait{.8}{0.09}{0.03}}}
  {\mathbin{\hskip.15ex\trait{.09}{.84}{0.02}\trait{.56}{.07}{.02}}\hskip.15ex}
  {\mathbin{\trait{.07}{.6}{.01}\trait{.4}{.06}{.01}}}}
\begin{document}

	% The following instructions defines the headings
	% for all pages except the first one
	% They can be safely removed
	%
\pagestyle{empty}
\pagestyle{myheadings}
\markboth%
{\underline{\centerline{\hfill\footnotesize%
\textsc{Maria Colombo, Antonio De Rosa and Andrea Marchese}%
\vphantom{,}\hfill}}}%
{\underline{\centerline{\hfill\footnotesize%
\textsc{Stability for the mailing problem}%
\vphantom{,}\hfill}}}

	% We do not use the maketitle command, and
	% in the next lines we define the headline for the
	% first page, then title, authors' names, and so on...
	% acknowledgements are at the end of the introduction
	% affiliations are at the end of the paper
	%
\thispagestyle{empty}

~\vskip -1.1 cm

	% heading of first page
	%

\vspace{1.7 cm}

	% title
	%
{\large\bf\centering
Stability for the mailing problem\\
}

\vspace{.6 cm}

	% authors' names
	%
\centerline{\sc Maria Colombo, Antonio De Rosa and Andrea Marchese}

\vspace{.8 cm}

{\rightskip 1 cm
\leftskip 1 cm
\parindent 0 pt
\footnotesize

	% abstract, keywords and MSC numbers
	%
{\sc Abstract.}
We prove that optimal traffic plans for the mailing problem in $\R^d$ are stable with respect to variations of the given coupling, above the critical exponent $\alpha=1-1/d$, thus solving an open problem stated in the book \emph{Optimal transportation networks}, by Bernot, Caselles and Morel.
We apply our novel result to study some regularity properties of the minimizers of the mailing problem. In particular, we show that only finitely many connected components of an optimal traffic plan meet together at any branching point.
\par
\medskip\noindent
{\sc Keywords: } Transportation network, Branched transportation, Irrigation problem, Mailing problem, Traffic plan, Stability, Regularity.

\par
\medskip\noindent
{\sc MSC :} 49Q20, 49Q10.
\par
}

%\tableofcontents

\section{Introduction}
The optimal branched transportation is a variant of the Monge-Kantorovich optimal transportation of mass, in which the mass particles are assumed to interact (rather than traveling independently) while moving from a source to a target distribution. In particular there is a gain in the cost of the transportation whenever some mass is transported in a grouped way. A consequence of this assumption is that the particles' paths form a one-dimensional network which develops branched structures. 

The interest in branched transportation arises from the observation of common structures in many natural supply-demand systems, such as the nerves of a leaf, river basins, or the nervous, the bronchial, and the cardiovascular system. It has been also used to model several human-designed systems, including power supply, urban planning, and irrigation. 

In order to translate in mathematical terms the convenience of grouping mass during the transportation, one considers a cost functional obtained integrating along the network created by the particles' trajectories a subadditive function of the intensity of the flow. 
The first \emph{discrete} model was introduced by Gilbert in \cite{Gilbert}. Then it was extended by Xia to a \emph{continuous} framework adopting an Eulerian formulation (describing the particles' flow) which uses Radon vector-valued measures, or, equivalently, 1-dimensional currents, called in this framework \emph{traffic paths}, (see \cite{Xia}). At about the same time a Lagrangian formulation (describing the particles' trajectories) of the continuous version of the problem was proposed by Maddalena, Morel, and Solimini \cite{MSM}, using the notion of \emph{traffic plans}, i.e. measures on the set of Lipschitz curves.

A great interest has been devoted to branched transportation problems in the last years, with results concerning existence  \cite{Xia,MSM,BCM1,BeCaMo,brabutsan,Pegon,flat-relax}, regularity \cite{xia2,MR2250166,DevSol,DevSolElementary,morsant,xiaBoundary,BraSol,brawir} and strategies to compute minimizers \cite{OuSan,BCF,BLS,massoubo,BOO,marmass1,marmass,gol,CDRM,MMT}.

In the present paper we restrict our attention to the so called \emph{mailing problem} (also known as \emph{who goes where problem}): a version of the branched transportation problem in which not only the initial and target mass distributions are given, but also a coupling between the two measures: in other words one knows a priori where each mass particle should be moved, and the only unknown is an optimal transportation network realizing such coupling. 

More precisely we address an open problem on the stability of optimal traffic plans for the mailing problem, with respect to variations of the given mass distribution, raised in \cite[Remark 6.13]{BCM}. Let us introduce some preliminary notation in order to state the problem and our main result.\\

Let $X:=\overline{B(0,R)}\subset\R^d$ and fix a probability measure $\pi \in \PP(X\times X)$. A traffic plan is a probability measure $\PPP \in \PP(\Lip)$ on the space of $1$-Lipschitz curves $\gamma:[0,\infty)\to X$, supported on curves which are eventually constant. The topology on $\Lip$ is induced by the uniform convergence on compact subsets of $[0,\infty)$. We say that $\PPP$ is associated to the coupling $\pi$ if it satisfies
$$(e_0,e_\infty)_\sharp \PPP=\pi, \quad \text{where} \qquad e_0(\gamma):= \gamma(0) \, \, \text{ and } \, \, \quad e_\infty(\gamma):= \lim_{t \to \infty}\gamma(t).$$ 
For given $\alpha\in(0,1)$, the \emph{$\alpha$-energy} of $\PPP$ is defined by 
\begin{equation*}
\E(\PPP):= \int_{\Lip} \int_{\R^+} |\gamma( t)|_{\PPP}^{\alpha-1}|\dot \gamma(t)| dt\, d\PPP(\gamma),
\end{equation*}
where the multiplicity at a point $x$ is given by
$$|x|_\PPP := \PPP (\{\gamma \in \Lip :  \, \gamma(t) = x, \quad \mbox{for some $t$}\}).$$
We say that a traffic plan $\PPP$, with a coupling $\pi$, is optimal, and we write $\PPP\in\OTP(\pi)$, if %it holds 
$$\E(\PPP) \leq \E(\PPP'), \qquad \text{for every $\PPP'$ such that $(e_0,e_\infty)_\sharp \PPP'=\pi$} .$$
On optimal traffic plans, the $\alpha$-energy coincides with the \emph{$\alpha$-mass}
$$\Mass^\alpha(\PPP):=\int_{\R^d}|x|_\PPP^\alpha d\Haus^1,$$
recovering the desired notion of cost, i.e. a quantity which is computed integrating a subadditive function of the mass flow.

In the class of traffic plans, we consider the usual notion of weak$^*$ convergence of Radon measures. To get continuity of the map $(e_\infty)_\sharp$ on traffic plans, with respect to the weak$^*$ convergence, we should require a technical assumption on the class of traffic plans that we consider, namely that, denoting $T(\gamma)$ the stopping time of a curve $\gamma$, there exists a constant $C$ such that each traffic plan $\PPP$ in the class satisfies
$$\int_{\Lip}T(\gamma)d\PPP\leq C.$$
In this case we say that $\PPP\in \TP_C$. Notice that this is a tightness condition on a class of probability measures. The main result of the paper is the following:
\begin{theorem}\label{thm:main}
	Let $\alpha>1-\frac{1}{d}$ and $C>0$. Let $\pi \in \PP(X\times X)$ and $\{ \pi_n\}_{n\in\N}\subseteq \PP(X\times X)$ be such that
	\begin{equation}\label{hp:supp-n-convergence}
\pi _n \overset{*}\rightharpoonup \pi, \qquad \mbox{weakly$^*$ in the sense of measures}.
	\end{equation}
	For every $n\in \N$, let $\PPP_n\in \OTP(\pi_n)$ be an optimal traffic plan for the mailing problem and assume that $\PPP_n\in\TP_C$ and that there exists a traffic plan $\PPP\in \PP(\Lip)$ satisfying
	$$
	\PPP_n\overset{*}\rightharpoonup\PPP,\qquad \mbox{ as $n\to\infty$}.
	$$
	Then $\PPP$ is optimal, namely $\PPP\in \OTP(\pi)$ and moreover $\E(\PPP_n)\to \E(\PPP)$. 
\end{theorem}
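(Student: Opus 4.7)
The plan is to combine three ingredients: weak$^*$-continuity of the endpoint map under the $\TP_C$ assumption, lower semicontinuity of $\E$, and an upper bound built by concatenating a competitor for $\pi$ with short branched connections between $\pi$ and $\pi_n$. The hypothesis $\alpha>1-\frac{1}{d}$ enters precisely in the last step, through the classical supercritical estimate guaranteeing that nearby couplings can be joined at arbitrarily small $\alpha$-mass cost.

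\smallskip

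\textbf{Step 1 (coupling of the limit and lower bound).} The map $e_0$ is continuous on $\Lip$, so $(e_0)_\sharp\PPP_n\weak (e_0)_\sharp\PPP$ automatically. The $\TP_C$ hypothesis forces $e_\infty$ to be essentially continuous along $\{\PPP_n\}$, as explicitly remarked in the statement of the theorem, hence $(e_0,e_\infty)_\sharp\PPP_n\weak (e_0,e_\infty)_\sharp\PPP$. Combined with $(e_0,e_\infty)_\sharp\PPP_n=\pi_n\weak \pi$, this identifies $(e_0,e_\infty)_\sharp\PPP=\pi$, so $\PPP$ itself competes for the mailing problem with coupling $\pi$. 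Lower semicontinuity of $\E$ along weakly convergent traffic plans (a standard fact in the theory, e.g.\ in \cite{BCM}) then yields $\E(\PPP)\leq\liminf_n\E(\PPP_n)$.

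\smallskip

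\textbf{Step 2 (recovery sequence and upper bound).} Fix any competitor $\QQQ\in\PP(\Lip)$ for $\pi$. Choose an optimal transport plan $\lambda_n\in\PP(X^2\times X^2)$ between $\pi$ and $\pi_n$ for the Euclidean cost on $X^2$; since $\pi_n\weak \pi$, the total cost $\int(|x-x'|+|y-y'|)\,d\lambda_n(x,y,x',y')$ vanishes. Let $\lambda_n^0,\lambda_n^\infty$ be the source-source and target-target projections of $\lambda_n$, which couple the source (resp.\ target) marginal of $\pi$ with that of $\pi_n$. The supercritical estimate, valid for $\alpha>1-\frac{1}{d}$, provides traffic plans $\mathbf{S}_n^0,\mathbf{S}_n^\infty$ realizing the couplings $\lambda_n^0,\lambda_n^\infty$ with $\Mass^\alpha(\mathbf{S}_n^0)+\Mass^\alpha(\mathbf{S}_n^\infty)\to 0$. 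I then build $\QQQ_n$ as a $\lambda_n$-weighted concatenation: to $\lambda_n$-a.e.\ quadruple $(x,y,x',y')$ I associate a curve from $x'$ to $y'$ obtained by gluing a curve of $\mathbf{S}_n^0$ from $x'$ to $x$, a curve of $\QQQ$ from $x$ to $y$, and a curve of $\mathbf{S}_n^\infty$ from $y$ to $y'$. By construction $(e_0,e_\infty)_\sharp\QQQ_n=\pi_n$, and the subadditivity of $\Mass^\alpha$ under superposition (using $\alpha<1$) yields
\begin{equation*}
\Mass^\alpha(\QQQ_n)\leq\Mass^\alpha(\mathbf{S}_n^0)+\Mass^\alpha(\QQQ)+\Mass^\alpha(\mathbf{S}_n^\infty)\leq\E(\QQQ)+o(1).
\end{equation*}
The optimality of $\PPP_n$ for $\pi_n$ and the inequality $\Mass^\alpha\leq\E$ together give $\E(\PPP_n)=\Mass^\alpha(\PPP_n)\leq\Mass^\alpha(\QQQ_n)\leq\E(\QQQ)+o(1)$, hence $\limsup_n\E(\PPP_n)\leq\E(\QQQ)$.

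\smallskip

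\textbf{Conclusion and main obstacle.} Taking the infimum over $\QQQ$ and combining with Step 1 yields $\E(\PPP)\leq\liminf_n\E(\PPP_n)\leq\limsup_n\E(\PPP_n)\leq\inf_\QQQ\E(\QQQ)$; since $\PPP$ itself is a competitor, $\E(\PPP)=\inf_\QQQ\E(\QQQ)$, so $\PPP\in\OTP(\pi)$ and the pinch forces $\E(\PPP_n)\to\E(\PPP)$. The main technical obstacle is executing Step 2 with the \emph{prescribed} coupling $\pi_n$: this demands a coupling-aware version of the supercritical estimate, bounding the minimum $\alpha$-mass of a traffic plan associated to a given $\nu\in\PP(X\times X)$ in terms of a Wasserstein-type cost of $\nu$, together with a measurable selection of the gluing (naturally phrased by realizing $\QQQ_n$ as a pushforward of an auxiliary measure on $\Lip\times\Lip\times\Lip$ under a concatenation map). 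These are precisely the points where the critical exponent condition $\alpha>1-\frac{1}{d}$ becomes unavoidable.
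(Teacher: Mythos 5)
Your outline reproduces the paper's strategy (lower bound by lower semicontinuity under the $\TP_C$ assumption, upper bound by turning a competitor for $\pi$ into a competitor for $\pi_n$ via short ``connections''), but there is a genuine gap exactly where you place the phrase ``the supercritical estimate \dots provides traffic plans $\mathbf{S}_n^0,\mathbf{S}_n^\infty$ realizing the couplings $\lambda_n^0,\lambda_n^\infty$'' with vanishing energy. The supercritical estimate available in the literature (Theorem \ref{irrigability}) produces \emph{some} traffic plan between two given marginals, with energy controlled by the diameter of their common support; it neither lets you prescribe the coupling nor gives a bound in terms of a Wasserstein-type cost of a coupling concentrated near the diagonal. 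The naive fix fails quantitatively: for $\nu=\frac1N\sum_i\delta_{(x_i,x_i+\delta e_1)}$ the fiberwise transport costs $N^{1-\alpha}\delta$, which blows up for fixed $\delta$, so any valid argument must \emph{group} the transport; and grouping while respecting a prescribed coupling is only possible when the connection is routed through a Dirac delta (Lemmas \ref{lemma:conc} and \ref{lemma:finiamola}). Making this work is the entire content of Theorem \ref{thm:good-version}: one needs the covering Lemma \ref{lemma:meas-cov} (this is where $\alpha>1-\tfrac1d$ actually enters, through $\sum_i r_i\mu(B_i)^\alpha<\e$), the cell-by-cell decomposition of $\PPP_n$ and $\PPP_m$, the rescalings $\alpha^{i,j}_{m,n}$ matching the masses of corresponding cells, the routing through the ball centers $x_j$ so that Lemma \ref{lemma:finiamola} can impose the coupling of $\tilde\PPP_n^{i,j}$, and the residual estimates. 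You explicitly defer all of this as ``the main technical obstacle,'' but it is not a technicality: it is the theorem. As written, the proposal proves nothing beyond what Step 1 gives.

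Two smaller points. First, your chain $\E(\PPP_n)=\Mass^\alpha(\PPP_n)\leq\Mass^\alpha(\QQQ_n)$ is not justified: optimality of $\PPP_n$ in $\TP(\pi_n)$ gives $\E(\PPP_n)\leq\E(\QQQ_n)$, and you should then bound $\E(\QQQ_n)$ (not $\Mass^\alpha(\QQQ_n)$) by the sum of the energies of the three pieces, which is exactly Lemma \ref{lemma:all-you-want-on-conc}(3); this is easily repaired. Second, your one-pass conclusion (taking the infimum over $\QQQ$) is a legitimate and slightly cleaner packaging than the paper's two applications of Theorem \ref{thm:good-version} (first with $\PPP_\infty$ optimal, then with $\PPP_\infty=\PPP$), and your triple-gluing via an auxiliary measure on $\Lip\times\Lip\times\Lip$ is sound in principle; but neither of these rescues the missing construction of the connections.
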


\begin{remark}
A comment about the $\TP_C$ assumption in Theorem \ref{thm:main} is necessary. In \cite[Proposition 6.12]{BCM} the assumption $\PPP_n\in\TP_C$ is not explicitly stated, but it is tacitly used. In \cite{CDRM2}, we show that without such assumption the stability as well as the convergence of $(
{e_\infty})_\sharp\PPP_n$ to $({e_\infty})_\sharp\PPP$ could fail, both for the mailing problem and for the ``standard'' branched transportation problem.
\end{remark}
 
Besides the obvious relevance for numerical applications, Theorem \ref{thm:main} can be used to carry on part of the regularity program for optimal traffic plans for the mailing problem, which is another open question in \cite[Section 15.3]{BCM}.
In Theorem \ref{rego2} we prove that any branching point of an optimal traffic plan splits the support in only finitely many connected components and that the number of connected components of an optimal traffic plan is finite.

\section{Notation and preliminaries}\label{s:notation}
We denote by $\R^{d}$ the $d$-dimensional Euclidean space and by $B(x,r)$ the open ball ${\{y\in\R^{d}:|x-y|<r\}}$. For a set $A\subset \R^d$, we denote by $\overline{A}$ its closure and by $A^c:=\R^d\setminus A$ its complementary set.
\subsection{Measures and rectifiable sets}
For a locally compact, separable metric space $Y$, we denote by $\M(Y)$ the set of positive finite Radon measures in $Y$, namely the set of positive measures on the $\sigma$-algebra of Borel sets of $Y$ that are finite and inner regular. 
The set of probability measures, i.e. those measures $\mu$ satisfying $\mu(Y)=1$, is denoted $\PP(Y)$.

%We denote by $|\mu|$ the total variation measure associated to $\mu$. The negative and positive part of $\mu$ are the positive measures defined respectively by
%$$\frac{|\mu|-\mu}{2} \quad\mbox{and}\quad \frac{|\mu|+\mu}{2}.$$
 For $\mu,\nu\in\M(Y)$, we write $\mu\leq \nu$ if $\mu(A)\leq \nu(A)$ for every Borel set $A$.
For a measure \(\mu\) we denote by 
$$\supp (\mu):= \bigcap\{C\subset Y:C \mbox{ is closed and } \mu(Y \setminus C)=0\}$$
its \emph{support}. We say that $\mu$ is \emph{supported} on a Borel set $E$ if $\mu(Y\setminus E)=0$.
For a  Borel set \(E\),  \(\mu\trace E\) is the  restriction of \(\mu\) to \(E\), i.e. the measure defined by 
$$[\mu\trace E](A):=\mu(E\cap A), \qquad \mbox{for every Borel set $A$.}$$ 
We denote by $|\mu|$ the total variation (or mass) of $\mu$, i.e. $|\mu|=\mu(Y)$.
%We say that two measures $\mu$ and $\nu$ are mutually singular if there exists a Borel set $E$ such that $\mu=\mu\trace E$ and $\nu=\nu\trace E^c$.

For a measure $\mu\in\M(Y)$, a metric space $Z$, and a Borel map \(\eta :Y\to Z\), we let \({\eta_\sharp \mu\in\M(Z)}\) be the push-forward measure, namely 
$$\eta_\sharp \mu(A):=\mu(\eta^{-1}(A)), \qquad \mbox{for every Borel set }A\subset Z.$$

We use $\Leb^d$ and $\Haus^k$ to denote respectively the $d$-dimensional Lebesgue measure on $\R^d$ and the $k$-dimensional Hausdorff measure (see \cite{SimonLN}).

%Given two Radon measures $\mu_1$ and $\mu_2$, we denote with $\mu_1\wedge \mu_2$ the maximal element in the set $\{\nu \in \mathcal M\, : \, \nu\leq \mu_1, \nu \leq \mu_2\}$, which is partially ordered by $\leq$. 

A  set \(K\subset \R^d\) is said \emph{countably \(k\)-rectifiable} (or simply \emph{\(k\)-rectifiable}) if it can be covered, up to an \(\Haus^k\)-negligible set, by countably many $k$-dimensional submanifolds of class \(C^1\).

\subsection{Lipschitz curves}
Let $X \subseteq \R^d$ be a compact, convex %$d$-dimensional 
set. We denote by $\Lip$ the space of $1$-Lipschitz curves $\gamma: \R^+ \to X$, endowed with the metric of uniform convergence on compact subsets of $\R^+:=\{t\in \R: t\geq 0\}$.

For every $\gamma \in \Lip$, we denote its (possibly infinite) {\emph{stopping time}} by
$$T(\gamma):=\inf\{t\in \R^+ : \dot \gamma(s)=0, \quad \mbox{for a.e. \, $s\geq t$}\}.$$
On the set $\{ \gamma \in \Lip(X) : T(\gamma)<\infty\}$, we define the evaluations at time $0$ and ``at time $\infty$''
$$e_0(\gamma):= \gamma(0), \qquad 
e_\infty(\gamma):= \gamma(T(\gamma)).
$$
%Moreover, given $\gamma \in \Lip$ and $t_1,t_2 \in \R^+$, we define $\gamma_{|[t_1,t_2]} \in \Lip$ as:                                                                                                                                                                                                                                                                                                                                                                                                                                                                              
%\begin{equation}\label{restriction}
%\gamma_{|[t_1,t_2]}(t):=\begin{cases} \gamma(t_1),  &\forall t\in [0,t_1]\\
%\gamma(t), & \forall t\in [t_1,t_2]\\
%\gamma(t_2), & \forall t\in [t_2,+\infty).
%\end{cases}
%\end{equation}
%We can give the following definitions:

\subsection{Traffic plans}
In this section, we recall the basic definitions concerning traffic plans (see \cite{BCM} for an exhaustive description). 
%{\color{red}
%Let $\Omega$ be a compact interval. A \emph{parametrized traffic plan} with values in $X$ is a measurable map $\chi :\Omega \times \R^+ \to X$ such that for almost all $\omega\in \Omega$, the function $t \to \chi(\omega, t)$ is $1$-Lipschitz, constant for $t$ sufficiently large and satisfies
%$$\int_{\Omega} T(\chi(\omega, \cdot))\, d\omega <\infty.$$
%%This definition is relevant since both the lower semicontinuity of the energy defined below and the compactness of the space of traffic plans with uniformly bounded energy depend also on a bound as \eqref{eqn:TPC}.
%
%%Realistic
%
%
%%If we call $\Psi: \Omega \ni \omega \to \chi(\omega,\cdot) \in \Lip$, w
%Given a parametrized traffic plan $\chi$, we denote by $\PPP_\chi\in \mathscr P(\Lip)$ the probability measure 
%$$\PPP_\chi (E) := \Leb^1 \big(\{\omega: \chi(\omega, \cdot) \in E\}), \quad \mbox{  for every Borel set $E \subset \Lip$}.$$
%
%\begin{remark}\label{rmk}
%	The map $\chi \mapsto \PPP_\chi$ is obviously not injective. On the other hand, to any $\PPP \in \mathscr P(\Lip)$ supported on $\{ \gamma \in \Lip(X) : T(\gamma)<\infty\}$, we can associate a parametrized traffic plan $\chi$ such that $\PPP = \PPP_\chi$, see \cite[Theorem A.3]{BCM}.
%	\end{remark}
%}

We denote with $\TP(X)$ the space of \emph{traffic plans} with values in $X$, i.e.
$$\TP(X):=\{\PPP\in \M(\Lip(X)): \PPP \mbox{ is supported on } \{ \gamma: T(\gamma)<\infty\}\}.$$

Every traffic plan $\PPP$ naturally identifies a \emph{coupling} between an ``irrigating'' and an ``irrigated'' probability measure on $\R^d$. We consider the measure in $\M(\R^d\times \R^d)$:
$$\pi_\PPP:=(e_0,e_\infty)_\sharp \PPP,$$
called the \emph{coupling} of $\PPP$ and, denoting $e_1$, $e_2$ the projections from $\R^d\times \R^d$ onto the first and the second components, we call the \emph{irrigating} and \emph{irrigated measure} of  $\PPP\in  \TP(X)$ respectively \begin{equation}\label{def:bordo}
	\mu^-_\PPP := (e_1)_\sharp \pi_\PPP = (e_0)_\sharp \PPP, \qquad  \mu^+_\PPP := (e_2)_\sharp \pi_\PPP = (e_\infty)_\sharp \PPP.
\end{equation}
  For every $\pi \in \M(X\times X)$, we denote the set of traffic plans with coupling $\pi$ by 
$$\TP (\pi):=\{\PPP \in \TP(X)\mbox{: $\pi_\PPP=\pi$} \}.$$
Sometimes we will not fix a coupling of a traffic plan, but just the pair of the initial and the final measure of the transportation process. Given two measures $\mu^-,\mu^+ \in \M(X)$, we denote the set of traffic plans with marginals $\mu^-,\mu^+$ by 
$$\TP(\mu^-,\mu^+):=\{\PPP \in \TP(X)\mbox{: $\mu^-_\PPP=\mu^-$ and $\mu^+_\PPP=\mu^+$} \}.$$

\subsection{Energies of traffic plans}
% $\bold{Multiplicity}$.
%Define the path class of $x \in \R^d$ in $\chi$ as the set
%$$\Omega_\chi^x := \{\omega \in \Omega: \exists t \in \R^+ \mbox{ s.t. }x = \chi(\omega,t)\},$$
For a traffic plan $\PPP$ and a point $x\in \R^d$, we define the \emph{multiplicity} of $\PPP\in  \TP(X)$ at $x$ by 
$$|x|_\PPP := \PPP (\{\gamma \in \Lip(X) :  \, \gamma(t) = x, \quad \mbox{for some $t\in\R^+$}\}).$$
Roughly speaking, the multiplicity measures the amount of paths passing through a given point $x$, without counting repetitions or orientations.
% $\bold{\alpha-Energy}$.
For any $\alpha \in [0, 1)$, we define a functional $\E: \TP(X) \to [0,\infty]$, that we call \emph{$\alpha$-energy}, defined by 
\begin{equation}\label{energy}
\E(\PPP):= \int_{\Lip} \int_{\R^+} |\gamma( t)|_{\PPP}^{\alpha-1}|\dot \gamma(t)| dt\, d\PPP(\gamma),
\end{equation}
where we used the convention that $0^{\alpha-1} = +\infty$ for $\alpha \in [0,1).$

%
%
%\begin{remark}[{\bf (Existence of a traffic plan with finite energy)}]
%	Given $\mu^-, \mu^+ \in \mathscr P(X)$, the existence of a traffic plan with finite energy irrigating $\mu^+$ from $\mu^-$ is not guaranteed. 
%	More precisely, ... (LAVORO MADDA SOLO) (see also Theorem ... below) for any $\alpha \in (0, 1)$ if the probability $\mu$ is $\alpha$-irrigable, then $\mu$ is concentrated on a $\Haus^\beta$-negligible set, with $\beta=\frac 1{1-\alpha}$. In particular, if $\alpha \leq 1-\frac 1{d}$, the measure $\mathcal{L}^d\trace B(0,1)$ is not $\alpha$-irrigable.
%	
%	On the other hand, for $\alpha $ strictly above the critical threshold $\alpha>1-1/d$ every probability $\mu$ is $\alpha$-irrigable.
%\end{remark}

% $\bold{Useful \, spaces \, of \, traffic \, plans}$.
We denote the set of optimal traffic plans with coupling $\pi \in \M(\R^d\times \R^d)$ as 
$$\OTP (\pi):=\{\PPP \in \TP (\pi) : \E(\PPP)\leq \E(\PPP') \quad \forall \,  \PPP' \in \TP (\pi) \}.$$

The existence of an optimal traffic plan follows from the lower semicontinuity of the $\alpha$-energy, which can be found in \cite{BCM} (see also \cite{Pegon}). The lower semicontinuity holds only in a certain subclass of $\TP(X)$. For every $C>0$ we define the class $\TP_C(X)$ as the traffic plans $\PPP\in\TP(X)$ such that
\begin{equation}\label{eqn:TPC}
	\int_{\Lip} T(\gamma)\, d\PPP \leq C.
\end{equation}

\begin{lemma}\label{TPC}
Let $C>0$ and let $\{\PPP_n\}_{n \in \N}$ be a sequence of traffic plans in $\TP_C(\R^d)$ weakly$^{*}$ converging to a traffic plan $\PPP$. Then 
$$\liminf_{n\to \infty} \E(\PPP_n) \geq \E(\PPP).$$
\end{lemma}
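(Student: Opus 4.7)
My plan is to combine an upper semicontinuity of the multiplicity with a lower semicontinuity of an integral representation of $\E$, in the spirit of Reshetnyak.

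First, I establish that for every $x\in\R^d$,
\begin{equation*}
\limsup_{n\to\infty}|x|_{\PPP_n}\leq |x|_{\PPP}.
\end{equation*}
For each $N>0$, the set $F_N(x):=\{\gamma\in\Lip: x\in\gamma([0,N])\}$ is closed in $\Lip$: if $\gamma_k\to\gamma$ uniformly on $[0,N]$ and $\gamma_k(t_k)=x$ with $t_k\in[0,N]$, compactness yields $t_k\to t$ and $\gamma(t)=x$. The Portmanteau theorem applied to $\PPP_n\weak\PPP$ gives $\limsup_n\PPP_n(F_N(x))\leq \PPP(F_N(x))\leq |x|_\PPP$. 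The $\TP_C$ bound and Chebyshev yield $\PPP_n(\{T(\gamma)>N\})\leq C/N$, hence $|x|_{\PPP_n}\leq \PPP_n(F_N(x))+C/N$, and letting $N\to\infty$ gives the claim. Since $s\mapsto s^{\alpha-1}$ is continuous and decreasing on $(0,\infty)$, this yields the appropriate lower semicontinuity of $\PPP\mapsto|x|_\PPP^{\alpha-1}$.

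Next, I use Fubini's theorem to rewrite
\begin{equation*}
\E(\PPP)=\int_{\R^d}|x|_{\PPP}^{\alpha-1}\,d\sigma_\PPP(x),\qquad \sigma_\PPP(A):=\int_{\Lip}\int_0^\infty \mathbf{1}_A(\gamma(t))|\dot\gamma(t)|\,dt\,d\PPP(\gamma),
\end{equation*}
where $\sigma_\PPP$ is a finite measure on $\R^d$ because $|\dot\gamma|\leq 1$ and $\PPP\in\TP_C$ give $\sigma_\PPP(\R^d)\leq C$. Since the weighted-length functional $\gamma\mapsto\int\phi(\gamma(t))|\dot\gamma(t)|\,dt$ is lower semicontinuous in $\gamma$ (under uniform convergence on compact sets) for every continuous $\phi\geq 0$, we get $\int\phi\,d\sigma_\PPP\leq \liminf_n\int\phi\,d\sigma_{\PPP_n}$; equivalently, $\sigma_\PPP$ is dominated by any weak-$*$ subsequential limit of $\sigma_{\PPP_n}$. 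Combining with the lower semicontinuity of the integrand from the first step, a Reshetnyak-type lower semicontinuity theorem delivers
\begin{equation*}
\liminf_{n\to\infty}\int_{\R^d}|x|_{\PPP_n}^{\alpha-1}\,d\sigma_{\PPP_n}\geq \int_{\R^d}|x|_{\PPP}^{\alpha-1}\,d\sigma_\PPP,
\end{equation*}
which is the desired inequality.

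The main obstacle is the singularity of $s\mapsto s^{\alpha-1}$ at the origin, which requires ensuring that the integrand is $\sigma_\PPP$-a.e.\ finite and that the semicontinuity step carries over despite the unboundedness. I would handle this by a truncation argument: replace $s^{\alpha-1}$ by $\min(s^{\alpha-1},M)$ for $M$ large, apply the lower semicontinuity to the resulting bounded lower semicontinuous integrand, and send $M\to\infty$ via monotone convergence; the singular set $\{x:|x|_\PPP=0\}$ plays no role because $\sigma_\PPP$ is supported on the geometric support of the network, where the multiplicity is strictly positive.
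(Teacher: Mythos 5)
The paper does not prove this lemma at all: it is quoted from the book of Bernot--Caselles--Morel (and Pegon), where the standard proof rests on the fact that the multiplicity is upper semicontinuous \emph{jointly} in the point and in the traffic plan, i.e. $\limsup_n|x_n|_{\PPP_n}\leq|x|_\PPP$ whenever $x_n\to x$ and $\PPP_n\weak\PPP$, combined with a Fatou-type argument along curves. This joint statement is exactly what is missing in your proposal, and its absence is a genuine gap. Your Step 1 only yields the pointwise bound $\limsup_n|x|_{\PPP_n}\leq|x|_\PPP$ for each \emph{fixed} $x$, and your Step 2 only yields $\sigma_\PPP\leq\sigma$ for any weak$^*$ limit $\sigma$ of $\sigma_{\PPP_n}$. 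These two facts do not imply $\liminf_n\int|x|_{\PPP_n}^{\alpha-1}\,d\sigma_{\PPP_n}\geq\int|x|_\PPP^{\alpha-1}\,d\sigma_\PPP$, and no Reshetnyak-type theorem covers this situation: Reshetnyak's theorem concerns a \emph{fixed} integrand evaluated on weakly converging (vector) measures, whereas here the integrand itself changes with $n$. The abstract implication you invoke is in fact false: take $\sigma_n=\delta_{x_n}$ with $x_n\to x$, $x_n\neq x$, and integrands $g_n$ that are large at the fixed point $x$ (so the pointwise semicontinuity holds) but tiny at $x_n$; then $\int g_n\,d\sigma_n$ stays small while $\int g\,d\sigma$ is large. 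In your setting the danger is precisely that the mass of $\sigma_{\PPP_n}$ sits at points $x_n$ close to, but different from, the points where you control the multiplicities.

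The repair stays within your strategy but needs two additions. First, upgrade Step 1 to the diagonal bound: for $x_n\to x$, use that $\{\gamma:\gamma([0,N])\cap\overline{B}(x,r)\neq\emptyset\}$ is closed, contains $\{\gamma:x_n\in \mathrm{Im}(\gamma)\}$ up to the set $\{T>N\}$ once $|x_n-x|\leq r$, apply Portmanteau and the Chebyshev bound $C/N$, then let $r\to0$ (continuity from above of $\PPP$) and $N\to\infty$; the same argument with a single measure shows that $x\mapsto|x|_\PPP$ is upper semicontinuous. Second, replace the appeal to Reshetnyak by an Ioffe-type lemma for varying integrands: if $\mu_n\weak\mu$ with uniformly bounded masses on the compact set $X$ and $\liminf_n f_n(x_n)\geq f(x)$ for every $x_n\to x$, with $f_n,f\geq0$, then $\liminf_n\int f_n\,d\mu_n\geq\int f\,d\mu$ (provable via the Lipschitz inf-convolutions $\phi_k(x):=\inf\{f_n(y)+k|x-y|:n\geq k,\ y\in X\}$, which are continuous, satisfy $\phi_k\leq f_n$ for $n\geq k$ and $\sup_k\phi_k\geq f$); your truncation at level $M$ then handles the unboundedness of $s\mapsto s^{\alpha-1}$. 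Two smaller remarks: the hypothesis does not state $\PPP\in\TP_C$, but it follows from the lower semicontinuity of $T$, which you should say when bounding $\sigma_\PPP$; and your closing claim that $\sigma_\PPP$ is carried by the set where the multiplicity is positive is false in general (e.g. a traffic plan diffused over a continuum of disjoint curves has $|x|_\PPP\equiv0$), though harmless, since with the convention $0^{\alpha-1}=+\infty$ the Fubini representation holds in $[0,\infty]$ and the truncation only requires monotone convergence on the limit side.
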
 

Let us denote $\E(\pi):=\E(\PPP)$ for some $\PPP \in \OTP (\pi)$. We recall the main result on the structure of traffic plans with finite $\alpha$-energy:
\begin{theorem}\emph{(see \cite[Theorem 4.10]{BCM}).}\label{Theoret}
Let $\alpha\in [0,1)$ and $\PPP$ be a traffic plan with $\E(\PPP)<\infty$.  Then there exists a rectifiable set $E$ such that
	$$\gamma(t)\in E, \qquad \mbox{for $\PPP$-a.e. $\gamma$ and for a.e. $t$.}$$
\end{theorem}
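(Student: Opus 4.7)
The candidate rectifiable set is $E := \{x \in \R^d : |x|_\PPP > 0\}$. The plan is to first establish $\Haus^1$-$\sigma$-finiteness of $E$ by bounding the $\alpha$-mass from above by the $\alpha$-energy, and then upgrade $\sigma$-finiteness to rectifiability via a Fubini argument exploiting that every curve in $\supp \PPP$ has $1$-rectifiable range.

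For the first step, the area formula applied to each $1$-Lipschitz curve $\gamma$ gives
$$\int_{\R^+} |\gamma(t)|_\PPP^{\alpha-1}\,|\dot\gamma(t)|\, dt \;=\; \int_{\R^d} N(x,\gamma)\, |x|_\PPP^{\alpha-1}\, d\Haus^1(x),$$
where $N(x,\gamma) := \#\gamma^{-1}(x)$. Integrating in $\PPP$ and swapping the order of integration by Fubini, together with the inequality $\int N(x,\gamma)\,d\PPP(\gamma) \geq |x|_\PPP$ (Markov's inequality applied to the integer-valued function $N(\cdot,\gamma)$), one obtains
$$\E(\PPP) \;\geq\; \int_{\R^d} |x|_\PPP^{\alpha}\, d\Haus^1(x) \;=\; \Mass^\alpha(\PPP).$$
A Chebyshev-type estimate then yields $\Haus^1(\{|x|_\PPP \geq \lambda\}) \leq \lambda^{-\alpha}\E(\PPP)$ for every $\lambda > 0$, so that $E = \bigcup_k \{|x|_\PPP \geq 1/k\}$ is $\Haus^1$-$\sigma$-finite.

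For the rectifiability, decompose $E = E_r \sqcup E_{ur}$ into its $1$-rectifiable and purely $1$-unrectifiable parts. Since the range of any $1$-Lipschitz curve is a Lipschitz image of $\R^+$ and hence $1$-rectifiable, one has $\Haus^1(\gamma(\R^+) \cap E_{ur}) = 0$ for every $\gamma \in \Lip$. Applying Fubini to the identity $|x|_\PPP = \PPP(\{\gamma : x \in \gamma(\R^+)\})$,
$$\int_{E_{ur}} |x|_\PPP\, d\Haus^1(x) \;=\; \int_{\Lip} \Haus^1(\gamma(\R^+) \cap E_{ur})\, d\PPP(\gamma) \;=\; 0,$$
which, combined with $|x|_\PPP > 0$ on $E_{ur}$, forces $\Haus^1(E_{ur}) = 0$. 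Hence $E$ coincides with a $1$-rectifiable set up to an $\Haus^1$-null modification. The pointwise inclusion $\gamma(t) \in E$ for $\PPP$-a.e. $\gamma$ and a.e. $t$ then follows from finite energy, since $|\gamma(t)|_\PPP^{\alpha-1} = +\infty$ whenever $\gamma(t) \notin E$ and $\alpha<1$: any moving portion of the curve must lie in $E$, while stationary portions (including the endpoint $\gamma(\infty)$) can be absorbed by enlarging $E$ with a $\Haus^1$-null set.

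The key and somewhat delicate step is the inequality $\E(\PPP) \geq \Mass^\alpha(\PPP)$, which becomes an equality only when $\PPP$ is optimal: it packages in a single estimate the interplay between the multiplicity function, the pointwise counting $N(x,\gamma)$, and the area formula, and is what turns the finite-energy hypothesis into the quantitative $\Haus^1$-bound needed to conclude rectifiability.
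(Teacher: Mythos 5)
The paper itself gives no proof of this statement --- it is quoted from [BCM, Theorem 4.10] --- so your argument can only be measured against the standard proof of that result, and your overall strategy (Chebyshev on the level sets of the multiplicity, then killing the purely unrectifiable part by integrating against the rectifiable curve images) is indeed the right one. Two steps, however, do not work as written.

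First, the derivation of $\E(\PPP)\geq \Mass^\alpha(\PPP)$ is circular. After the area formula you ``swap the order of integration by Fubini'' between $d\PPP$ and $d\Haus^1$ on $\R^d$; but $\Haus^1$ is not $\sigma$-finite on $\R^d$ for $d\geq 2$, Tonelli does not apply, and iterated integrals of nonnegative functions can genuinely disagree in that setting. The exchange is legitimate only once one knows that the set carrying the integrand is $\Haus^1$-$\sigma$-finite --- which is precisely what your Chebyshev bound is supposed to produce. The repair is to argue level set by level set: on $E_\lambda:=\{|x|_\PPP\geq\lambda\}$ one has $1\leq \lambda^{1-\alpha}|x|_\PPP^{\alpha-1}$, hence $\Haus^1(\mathrm{Im}(\gamma)\cap E_\lambda)\leq \lambda^{1-\alpha}\int_0^\infty|\gamma(t)|_\PPP^{\alpha-1}|\dot\gamma(t)|\,dt$ for each curve, so the finite measure $\nu:=\int_{\Lip}\Haus^1\trace\mathrm{Im}(\gamma)\,d\PPP(\gamma)$ satisfies $\nu(E_\lambda)\leq\lambda^{1-\alpha}\E(\PPP)$; then one tests against compact subsets $K\subseteq E_\lambda$ of finite $\Haus^1$-measure (Besicovitch--Davies), where Fubini is available and yields $\nu(K)=\int_K|x|_\PPP\,d\Haus^1\geq\lambda\Haus^1(K)$. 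This gives $\Haus^1(E_\lambda)\leq\lambda^{-\alpha}\E(\PPP)$, hence the $\sigma$-finiteness of $E$, and only then do your remaining Fubini exchanges (including the one on $E_{ur}$, which is otherwise fine) become valid.

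Second, the stationary portions cannot be ``absorbed by enlarging $E$ with an $\Haus^1$-null set.'' For a single curve the constancy intervals contribute only countably many points, but over $\PPP$-a.e.\ $\gamma$ these points may fill a set of infinite $\Haus^1$-measure: take $\PPP$ to be the push-forward of $\Leb^2\trace[0,1]^2$ under the map sending $x$ to the constant curve at $x$. Then $\E(\PPP)=0$ and $|x|_\PPP\equiv 0$, yet no $1$-rectifiable set contains $\gamma(0)$ for $\PPP$-a.e.\ $\gamma$. What your argument actually establishes is that $\gamma(t)\in E$ for a.e.\ $t$ with $\dot\gamma(t)\neq 0$; this is the form in which the result is used in the paper (to define $\MM$ in \eqref{mass}), and it coincides with the stated conclusion only under a normalization of the parametrization. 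A minor further point: equality $\E(\PPP)=\MM(\PPP)$ holds already under the simple path property (Remark \ref{simple}), not only at optima.
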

In the case $\E(\PPP)<\infty$, we can define an alternative notion of ``energy'', called $\alpha$-\emph{mass}. Let $E$ be the set associated to $\PPP$ as in Theorem \ref{Theoret}, we denote
\begin{equation}\label{mass}
\MM(\PPP):= \int_{E} |x|_\PPP^{\alpha} d\Haus^1(x).
\end{equation}
%
%\begin{remark}
%	Nevertheless, we have the following property (see \cite[Remark 3.36]{BCM}):
%	$$\E(\chi_1)=\E(\chi_2) \qquad \mbox{for every $\chi_1,\chi_2$ such that $\PP_{\chi_1}=\PP_{\chi_2}$}.$$
%	Thus, we can define
%	$$\E(\PP):=\E(\chi) \qquad \mbox{for any  $\chi$ such that $\PP=\PP_\chi$.}$$
%\end{remark}
Moreover we recall the relationship between the functionals $\E$ and $\Mass^\alpha$.
%We need the following preliminary definitions.
%For a curve $\gamma \in \Lip$, we denote 
%$$\D(\gamma):=\{x \in \R^d : \exists t_1\neq t_2\in \R^+ \mbox{ s.t. } \gamma(t_1)=\gamma(t_2)=x\}.$$
%We say that a traffic plan $\PPP$ is \emph{simple path} if 
%$$\Haus^1(\D(\gamma) = 0 \qquad \mbox{ for $\PPP$-a.e. $\gamma \in \Lip$}.$$
%

%\begin{proposition}\emph{(see \cite[Proposition 4.9]{BCM}).}
%	Every $\PPP \in \OTP (\pi_{\PPP})$  is a loop-free traffic plan, i.e. for $\PPP$-almost all $\gamma \in \Lip$, $\gamma( \cdot)$ is injective on $[0, T(\gamma)]$.
%\end{proposition}

\begin{proposition}\label{massaenergia}\emph{(see \cite[Proposition 4.8]{BCM}).}
	For every $\PPP \in \TP (\pi_{\PPP})$ it holds 
	\begin{equation}\label{energynonopt}
		\E(\pi_{\PPP})\geq \Mass^\alpha(\PPP).
	\end{equation}
	If moreover $\PPP\in \OTP (\pi_{\PPP})$, then 
	\begin{equation}\label{energyopt}
		\E(\PPP)=\Mass^\alpha(\PPP).
	\end{equation}
\end{proposition}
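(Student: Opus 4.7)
The plan is to express both $\E(\PPP)$ and $\Mass^\alpha(\PPP)$ as integrals over the rectifiable set $E$ supplied by Theorem~\ref{Theoret}, and to compare them via a pointwise estimate on the integrands. Both functionals feature the same weight $|x|_\PPP^{\alpha-1}$ integrated against $\Haus^1\trace E$, but $\E$ is parametrisation-sensitive — each \emph{passage} of a curve through $x$ is counted — while $\Mass^\alpha$ depends only on \emph{occupancy}. They therefore coincide exactly when $\PPP$-a.e.\ curve does not revisit points.

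To make this quantitative, I would apply the area formula (Banach indicatrix) curve by curve: for every $1$-Lipschitz $\gamma:\R^+\to X$ and every Borel $f:E\to[0,\infty]$,
\[
\int_{\R^+} f(\gamma(t))\,|\dot\gamma(t)|\,dt=\int_E f(x)\,N_\gamma(x)\,d\Haus^1(x),\qquad N_\gamma(x):=\#\{t\in\R^+:\gamma(t)=x\}.
\]
Choosing $f(x)=|x|_\PPP^{\alpha-1}$ (well-defined $\Haus^1$-a.e.\ on $E$ whenever $\E(\PPP)<\infty$), integrating against $\PPP$ and using Fubini, one obtains
\[
\E(\PPP)=\int_E |x|_\PPP^{\alpha-1}\Bigl(\int_{\Lip} N_\gamma(x)\,d\PPP(\gamma)\Bigr)\,d\Haus^1(x).
\]
Since $N_\gamma(x)\geq\mathbbm{1}_{\{x\in\gamma(\R^+)\}}$, the inner integral is bounded below by $|x|_\PPP$, yielding
\[
\E(\PPP)\geq\int_E|x|_\PPP^{\alpha}\,d\Haus^1=\Mass^\alpha(\PPP),
\]
which (applied to any representative of $\OTP(\pi_\PPP)$) gives~\eqref{energynonopt}.

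For the equality~\eqref{energyopt}, I would invoke the \emph{single-path} (no-loop) property of optimal traffic plans: for $\PPP$-a.e.\ $\gamma$, the restriction $\gamma|_{[0,T(\gamma)]}$ is injective, so $N_\gamma(x)=\mathbbm{1}_{\{x\in\gamma(\R^+)\}}$ holds $\Haus^1$-a.e. Reinserting this into the Fubini identity turns the above inequality into an equality. The main obstacle is proving this single-path property. The classical approach is a loop-erasure argument: if a set of positive $\PPP$-measure of curves carried a non-trivial loop, a measurable selection of first/last revisit times (via a standard Polish-space selection theorem) would produce a competitor $\widetilde\PPP\in\TP(\pi_\PPP)$ by excising the loop on each such $\gamma$. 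Since multiplicities can only decrease and $\alpha\in(0,1)$, the integrand $|x|_\PPP^\alpha$ drops pointwise on a set of positive $\Haus^1$-measure, giving $\Mass^\alpha(\widetilde\PPP)<\Mass^\alpha(\PPP)\leq\E(\PPP)$; iterating the construction (or arguing that the shortened plan already satisfies $\E=\Mass^\alpha$ on loop-free components) produces a competitor with energy strictly below $\E(\PPP)$, contradicting optimality.
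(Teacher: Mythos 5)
The paper gives no proof of this proposition (it is quoted from [BCM, Proposition 4.8]), so your argument must stand on its own. Your first half does, and it is the standard route: the area formula along each curve, Tonelli, and the bound $\int_{\Lip} N_\gamma(x)\,d\PPP(\gamma)\ge |x|_\PPP$ give $\E(\PPP)\ge\MM(\PPP)$, with equality exactly when for $\Haus^1$-a.e.\ $x$ almost every curve meets $x$ at most once, i.e.\ under the paper's \emph{simple path property} (what you call the ``single-path property'' is, in this paper's terminology, a different statement about coincidence of fibers). Two caveats: what this yields, and what the paper actually uses later (e.g.\ in \eqref{seconda}), is $\E(\PPP)\ge\MM(\PPP)$ for the given plan; applying it ``to a representative of $\OTP(\pi_\PPP)$'' bounds $\E(\pi_\PPP)$ by the $\alpha$-mass of that optimizer, not of the arbitrary $\PPP$ appearing in \eqref{energynonopt}. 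Also, Theorem \ref{Theoret} and the measurability of $x\mapsto|x|_\PPP$ and of $(x,\gamma)\mapsto N_\gamma(x)$ are needed and glossed over, though standard.

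The genuine gap is in the loop-erasure step. First, the claimed strict drop $\MM(\widetilde\PPP)<\MM(\PPP)$ is unjustified and false in general: excising a loop removes \emph{parameter values}, not necessarily \emph{points}; the erased points may still lie on the image of the truncated curve or of other curves, and $|x|_\PPP$ records only whether a curve visits $x$, not how often. For instance, if every curve traverses a fixed subsegment forward--backward--forward, then $\Haus^1(D(\gamma))>0$ for every curve, yet loop erasure changes no multiplicity and $\MM(\widetilde\PPP)=\MM(\PPP)$. Second, even granting such a drop, a contradiction with optimality needs $\E(\widetilde\PPP)<\E(\PPP)$, and the only relation you have for the competitor, $\E(\widetilde\PPP)\ge\MM(\widetilde\PPP)$, points the wrong way; ``iterating the construction'' does not repair this. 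The fix is precisely the parenthetical you mention but do not carry out: perform a \emph{complete} loop erasure, replacing ($\PPP$-a.e., via your measurable selection) each curve by an injective curve with the same endpoints contained in its image. Then $\widetilde\PPP\in\TP(\pi_\PPP)$ is supported on simple curves, so the equality case of your first step gives $\E(\widetilde\PPP)=\MM(\widetilde\PPP)$, while $|x|_{\widetilde\PPP}\le|x|_\PPP$ gives $\MM(\widetilde\PPP)\le\MM(\PPP)$; hence
$$\E(\PPP)\le\E(\widetilde\PPP)=\MM(\widetilde\PPP)\le\MM(\PPP)\le\E(\PPP),$$
the first inequality by optimality, and equality throughout is \eqref{energyopt} — no strict inequality is needed. (If you prefer the contradiction form, the strict inequality to use is $\MM(\PPP)<\E(\PPP)$, which follows from the failure of the simple path property through the equality analysis of step one, not $\MM(\widetilde\PPP)<\MM(\PPP)$.)
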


Optimal traffic plans satisfy a structural condition, usually called \emph{simple path property}, see \cite[Definition 4.7]{BCM}: 
\begin{definition}
For each $\gamma \in \Lip$, we define
$$D(\gamma) = \{x \in \R^d | \exists s,t \in [0,+\infty): s\neq t, \,\gamma(s)=\gamma(t)=x\}.$$
We say that a traffic plan $\PPP$ satisfies the simple path property if 
$$\int_{\Lip} \Haus^1(D(\gamma))\, d\PPP(\gamma) = 0.$$
\end{definition}
\begin{remark}\label{simple}
In \cite[Proposition 4.8]{BCM} it is proved that the conclusion \eqref{energyopt} holds even if $\PPP$ satisfies the simple path property (but it is not necessarily an optimum).
\end{remark}

Optimal traffic plans enjoy an important regularity property, usually called \emph{single path property}. Roughly speaking all fibers between two given points coincide (but they may have opposite orientations). See \cite[Proposition 7.4]{BCM} and the discussion before, for a formal definition.
%\begin{remark}\label{lemma:sottoplan-cheap}MARIA: VIENE MAI USATO QUESTO?
%Let  $\mu^-,\mu^+ \in \PP(\R^d)$, let $\PPP$ be an optimal traffic plan between $\mu^-$ and $\mu^+$, and let $\PPP' \in \M(\Lip)$ be such that $\PPP' \leq \PPP$.
%Then $\E(\PPP') \leq \E(\PPP)$.
%
%Indeed, the density of $\PPP'$ is controlled at any $x$ through
%\begin{equation}
%\label{eqn:dens-ineq}
%\theta_{\PPP'}(x) = \PPP'(\{ \gamma: x\in \gamma([0,\infty))\}) \leq \PPP(\{ \gamma: x\in \gamma([0,\infty))\}) = \theta_{\PPP}(x).
%\end{equation}
%By Theorem ..., $\PPP$ is single path, and consequently the same holds for $\PPP'$. As a consequence, by Theorem ... we have 
%$$\E(\PPP')= \MM(\PPP')\leq  \MM(\PPP)=\E(\PPP),$$
%where the inequality follows from \eqref{eqn:dens-ineq}.
%\end{remark}

The last prerequisite, crucial for our proof, is the fact that, when $\alpha>1-\frac{1}{d}$, the minimal transportation cost between two probability measures supported on a compact set is bounded by a constant which depends only on the diameter of the set.

\begin{theorem}\label{irrigability}\emph{(see \cite[Corollary 6.9]{BCM}).}
Let $\alpha > 1-\frac 1d$, and let $\mu^-, \mu^+ \in \M(X)$.
Then there exists $\PPP \in \TP(\mu^-,\mu^+)$ satisfying
$$\E(\PPP)\leq C_{\alpha,d}\,{\rm{diam}(X)}|\mu^-|,$$
where $C_{\alpha,d}$ is a constant depending only on $\alpha$ and $d$. 
\end{theorem}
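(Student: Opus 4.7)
The plan is a dyadic ``hub'' construction exploiting the fact that $\alpha > 1-\frac1d$ makes a certain geometric series converge. First I would reduce the problem to a one-sided lemma: fix any $x_0 \in X$ and prove that for each $\nu \in \M(X)$ there exists $\QQQ_\nu \in \TP(\nu, |\nu|\delta_{x_0})$ with $\E(\QQQ_\nu)\leq C_{\alpha,d}\,\mathrm{diam}(X)|\nu|^\alpha$. Once this is available, concatenating $\QQQ_{\mu^-}$ with the time-reversal of $\QQQ_{\mu^+}$ (which have the same total mass $M:=|\mu^-|=|\mu^+|$) yields the desired traffic plan with energy $\leq 2C_{\alpha,d}\,\mathrm{diam}(X)M^\alpha$; the stated form of the bound then follows, since for probability marginals $M^\alpha=M$, and for general marginals a linear rescaling exploiting the $\alpha$-homogeneity of $\E$ (and, if needed, a split-and-reassemble argument when $M\le 1$) converts the bound into $C_{\alpha,d}\,\mathrm{diam}(X)|\mu^-|$.

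For the one-sided lemma, enclose $X$ in a cube of side $L\lesssim\mathrm{diam}(X)$ and decompose it dyadically. For each scale $k\geq 0$ label the nonempty sub-cubes $\{Q_k^i\}_i$ (there are at most $2^{kd}$ of them), write $c_k^i$ for the center of $Q_k^i$, and set $m_k^i:=\nu(Q_k^i)$. For a finite truncation level $K$, build an approximate traffic plan $\QQQ_\nu^K$ that sends each point $x \in Q_K^i$ along the polygonal curve $x\to c_K^i\to c_{K-1}^{p(i)}\to\cdots \to c_0=x_0$, where $p(\cdot)$ denotes the parent cube at the previous scale. The resulting $\QQQ_\nu^K$ is a Borel probability measure on $\Lip$ supported on piecewise linear curves whose total length is uniformly bounded by $\sum_k 2\cdot 2^{-k} L\sqrt{d}\lesssim L$, hence of uniformly bounded stopping time, and it satisfies the simple path property by construction.

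Next I would compute the cost scale by scale: the segments at scale $k$ have length $\lesssim 2^{-k}L$ and carry masses $\{m_k^i\}_i$ summing to $|\nu|$. By concavity of $t\mapsto t^\alpha$ with at most $2^{kd}$ nonzero summands,
\begin{equation*}
\sum_i (m_k^i)^\alpha \;\leq\; (2^{kd})^{1-\alpha}\,|\nu|^\alpha\,,
\end{equation*}
so the scale-$k$ contribution is $\lesssim L\,|\nu|^\alpha\, 2^{k(d(1-\alpha)-1)}$. The assumption $\alpha>1-\frac1d$ makes the exponent strictly negative, so $\sum_{k\geq 0} 2^{k(d(1-\alpha)-1)}$ converges to a constant $C_{\alpha,d}$, and summing yields $\MM(\QQQ_\nu^K)\leq C_{\alpha,d}\,L\,|\nu|^\alpha$. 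Since $\QQQ_\nu^K$ enjoys the simple path property, Remark \ref{simple} upgrades this to $\E(\QQQ_\nu^K)\leq C_{\alpha,d}\,L\,|\nu|^\alpha$, uniformly in $K$.

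Finally I would pass to the limit $K\to\infty$. The uniform bound on stopping times places $\{\QQQ_\nu^K\}$ in $\TP_C$ for a suitable $C$, and tightness on $\Lip$ (the curves live in the compact set $X$ with uniformly bounded Lipschitz constant) gives a weak$^*$ subsequential limit $\QQQ_\nu$. Lemma \ref{TPC} propagates the energy bound, and continuity of $(e_0,e_\infty)_\sharp$ on $\TP_C$ identifies the marginals as $\nu$ and $|\nu|\delta_{x_0}$. The main obstacle is ensuring that the successive polygonal trajectories concatenate in a way that produces a well-defined Borel measure on $\Lip$ (the subtle point being that as $K$ grows, the ``tail'' of each curve is extended by an additional short segment, and one needs a Cauchy-in-$\Lip$ argument based on the geometric decay $\sum_k 2^{-k}L<\infty$); once this is set up the existence of the limit and the verification of its marginals follow from standard weak$^*$ compactness and the continuity properties of $e_0,e_\infty$ on $\TP_C$.
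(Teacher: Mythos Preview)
The paper does not prove this statement; it is quoted as \cite[Corollary~6.9]{BCM}. Your dyadic hub construction is precisely the standard argument used there (and in Xia's original work), so the overall route is the right one and there is nothing to compare against in the paper itself.

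There is, however, a real gap in your execution. For diffuse $\nu$ (in dimension $d\ge 2$) and any finite truncation level $K$, the leaf segments $[x,c_K^i]$ of $\QQQ_\nu^K$ have zero multiplicity at $\Haus^1$-a.e.\ interior point $z$: only curves starting on the ray from $c_K^i$ through $z$ traverse $z$, and that ray has $\nu$-measure zero. By the convention $0^{\alpha-1}=+\infty$ this forces $\E(\QQQ_\nu^K)=+\infty$. Remark~\ref{simple} does not rescue the situation, since in this paper $\MM$ is only defined under the hypothesis $\E<\infty$; and with $\E(\QQQ_\nu^K)=+\infty$ the lower-semicontinuity Lemma~\ref{TPC} gives nothing in the limit $K\to\infty$. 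The standard repair is to drop the leaves: replace $\nu$ by the atomic approximation $\nu_K:=\sum_i \nu(Q_K^i)\,\delta_{c_K^i}$, build the finite-tree plan $\QQQ_K\in\TP(\nu_K,|\nu|\delta_{x_0})$ along the center-to-center segments only, for which every multiplicity is strictly positive and the identity $\E(\QQQ_K)=\MM(\QQQ_K)\le C_{\alpha,d}\,L\,|\nu|^\alpha$ is immediate, and then pass to the limit using $\nu_K\rightharpoonup\nu$ together with Lemma~\ref{TPC}. A secondary remark: the construction naturally produces the bound $C_{\alpha,d}\,\mathrm{diam}(X)\,|\mu^-|^\alpha$, and your ``split-and-reassemble'' does not convert this to the linear power $|\mu^-|$ when $|\mu^-|<1$. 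Since the paper itself invokes the $\alpha$-power version in the proof of Corollary~\ref{cor:small-balls-en-con-assegn}, the exponent in the displayed inequality is most likely a typo, and you should simply prove the $|\mu^-|^\alpha$ bound.
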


%Optimality condition

%We moreover remark that if we associate to the traffic plan $\chi$ the $1$-rectifiable measure $R_\chi=\theta_\chi\Haus^1\trace E_\chi$, where $E_\chi:=\{\theta_\chi>0\}$ is a $1$-rectifiable set, then
%\begin{equation}\label{measuretraffic}
%\Mass^\alpha(\chi)=\Mass^\alpha(R_\chi).
%\end{equation}

\section{Concatenation and compatibility with the mailing problem}
\subsection{Disintegration}
%We recall that, {\color{red}QQ: vogliamo tenere questa sezione? poi il teorema l'abbiamo gia' usato, quindi se mai questa va spostata prima} by the disintegration theorem, given $Y,Z$ complete, separable metric spaces, a Borel function $f:Y\to Z$,  $\mu \in \mathcal \M(Y)$, then there exists a family $\mu_x \in \PP(Y)$ (unique up to $ (f_\#\mu)$-null sets) such that $\mu_x$ is supported on $f^{-1}(x)$ and
%$$\mu = (f_\#\mu)(x) \otimes \mu_x,$$
%namely
%$$ \int_Y g(y)\, d\mu(y) = \int_Z\int_Y g(y)\, d\mu_x(y)\, d (f_\#\mu)(x)  \qquad \mbox{for any $g:Y\to \R$ Borel}.$$
%

Let $\PPP$ be any traffic plan, with coupling $\pi$. By the disintegration theorem for Radon measures, see \cite[Theorem 2.28]{AFP}, applied to $\PPP$, with respect to the map ${(e_0,e_\infty):\Lip\to X\times X}$ (which evaluates each curve at its initial and final point), we deduce that $\PPP$ can be written as 
$$
\PPP =\pi \otimes \PPP^{x,y},
$$
for a suitable family of probability measures $\PPP^{x,y}$ supported on the set of curves which begin at $x$ and end at $y$.
The latter means that 
$$\PPP(A):=\int_{X\times X}\PPP^{x,y}(\{\gamma\in A:(e_0,e_\infty)(\gamma)=(x,y)\})d\pi(x,y), \quad\mbox{ for every Borel set $A\subset\Lip$}.$$
This is useful to prove the following lemma:
%QUESTO LEMMA SI PUO' VEDERE NEL NOSTRO CASO SPECIFICO ANCHE CON LE PARAMETRIZZAZIONI
\begin{lemma}\label{lemma:somma-plan}
Let $N \in \N$ and for every $i=1,..., N$ let $\mu^-_i,\mu^+_i \in \M(X)$ such that ${\mu^-_i(X) =\mu^+_i(X)}$. Let $\PPP \in \TP\big(\sum_{i=1}^N \mu^-_i \times \mu^+_i\big)$. Then for every $i=1,..., N$ there exists $\PPP^i \in \TP\big(\mu^-_i \times \mu^+_i\big)$ such that
\begin{equation}\label{eqn:chi-come-somma}
\PPP = \sum_{i=1}^N \PPP^i.
\end{equation}
\end{lemma}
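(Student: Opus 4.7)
The plan is to split $\PPP$ by assigning each curve a weight determined by which summand of the coupling its endpoints belong to, and this is cleanest via the disintegration $\PPP = \pi \otimes \PPP^{x,y}$ just recalled, where $\pi = \sum_{i=1}^N \mu_i^- \times \mu_i^+$. I would begin by splitting the base measure: since each summand $\pi^i := \mu_i^- \times \mu_i^+$ satisfies $\pi^i \leq \pi$, the Radon--Nikodym theorem yields Borel densities $f_i \colon X \times X \to [0,1]$ with $\pi^i = f_i\,\pi$, and the identity $\sum_i \pi^i = \pi$ forces $\sum_{i=1}^N f_i = 1$ at $\pi$-almost every point of $X \times X$.

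Next I would define
\[
\PPP^i := \pi^i \otimes \PPP^{x,y}, \qquad \text{equivalently} \qquad d\PPP^i(\gamma) := f_i\bigl(e_0(\gamma), e_\infty(\gamma)\bigr)\, d\PPP(\gamma).
\]
Because $0 \leq f_i \leq 1$, each $\PPP^i$ is a positive measure on $\Lip(X)$ with $\PPP^i \leq \PPP$; in particular it inherits the support property $T(\gamma) < \infty$, so $\PPP^i \in \TP(X)$.

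To finish I would verify the two required properties. Pushing forward by $(e_0, e_\infty)$,
\[
(e_0, e_\infty)_\sharp \PPP^i \;=\; f_i \cdot (e_0, e_\infty)_\sharp \PPP \;=\; f_i\,\pi \;=\; \pi^i,
\]
so $\PPP^i \in \TP(\mu_i^- \times \mu_i^+)$. For the decomposition, the identity $\sum_i f_i = 1$ holds $\pi$-a.e., and since $(e_0, e_\infty)_\sharp \PPP = \pi$, the pullback $\sum_i f_i(e_0(\gamma), e_\infty(\gamma)) = 1$ holds for $\PPP$-a.e.\ $\gamma$, whence $\sum_{i=1}^N \PPP^i = \PPP$.

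There is no real obstacle here: the proof is entirely a matter of Radon--Nikodym on the base $X \times X$, transported to $\Lip(X)$ via the disintegration. The only cosmetic point worth care is the measurability of $\gamma \mapsto f_i(e_0(\gamma), e_\infty(\gamma))$, which follows from the continuity (hence Borel measurability) of $(e_0, e_\infty)$ on the subset $\{T(\gamma) < \infty\}$ that carries $\PPP$, together with the Borel regularity of $f_i$.
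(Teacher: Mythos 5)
Your proof is correct and follows essentially the same route as the paper: both disintegrate $\PPP$ with respect to $(e_0,e_\infty)$ and set $\PPP^i:=(\mu_i^-\times\mu_i^+)\otimes\PPP^{x,y}$, with the sum identity and the coupling $(e_0,e_\infty)_\sharp\PPP^i=\mu_i^-\times\mu_i^+$ verified by the same push-forward computations. The Radon--Nikodym densities $f_i$ you introduce give an equivalent (and perfectly valid) description of the same measures $\PPP^i$, but are not needed; the paper obtains the decomposition directly from the linearity of the disintegration in the base measure.
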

\begin{proof}
We disintegrate  the traffic plan $\PPP$ with respect to $(e_0, e_\infty)$ to get a family of probability measures $\{\PPP^{x,y}\}_{(x,y)\in X \times X}$ such that
$$
\PPP = \Big(\sum_{i=1}^N \mu^-_i \times \mu^+_i\Big) \otimes \PPP^{x,y},
$$
namely for every test function $\varphi:\Lip(X)\to\R$ %{\color{red} perche' c'e' una tilde in $\tilde\PPP^{x,y}$ e cosa e' $\pi$?}
\begin{equation}
\label{eqn:defn-chi-conc}
\int \varphi(\gamma) \, d \PPP(\gamma) = \int \varphi( \gamma) \, d \PPP^{x,y}(\gamma)\, d\Big(\sum_{i=1}^N \mu^-_i \times \mu^+_i\Big)(x,y)=\sum_{i=1}^N \int \varphi( \gamma) \, d \PPP^{x,y}(\gamma)\, d(\mu^-_i \times \mu^+_i)(x,y).
\end{equation}
For every $i=1,..., N$ we define
$$
\PPP^i := \big(\mu^-_i \times \mu^+_i\big)\otimes \PPP^{x,y}.
$$
Then \eqref{eqn:chi-come-somma} is directly implied by \eqref{eqn:defn-chi-conc}.
Recalling that for every $(x,y) \in X \times X$ and for $\PPP^{x,y}$-a.e $\gamma$, it holds $(e_0(\gamma),e_\infty(\gamma))=(x,y)$, for every  test function $\psi:X\times X\to\R$ we compute
\begin{equation}
\begin{split}
\int \psi(x,y)\, d(e_0,e_\infty)_\# \PPP^i(x,y)&= \int \psi(e_0(\gamma),e_\infty(\gamma))\, d \PPP^i(\gamma)\\ 
&= \int \psi(e_0(\gamma),e_\infty(\gamma))\, d\PPP^{x,y}(\gamma)d\big(\mu^-_i \times \mu^+_i\big)(x,y)\\
&= \int \psi(x,y)\, d\big(\mu^-_i \times \mu^+_i\big)(x,y),
\end{split}
\end{equation} 
which reads $(e_0,e_\infty)_\# \PPP^i = \mu^-_i \times \mu^+_i$, i.e. $\PPP^i \in \TP\big(\mu^-_i \times \mu^+_i\big)$.% This concludes the proof.
\end{proof}

\subsection{Concatenation}
In this paper we will often need to perform the following operation: given two traffic plans $\PPP_1\in\TP(\mu^-_1,\mu^+_1)$ and $\PPP_2\in\TP(\mu^-_2,\mu^+_2)$, with $\mu^+_1=\mu^-_2$, we want to define a traffic plan $\PPP\in\TP(\mu^-_1,\mu^+_2)$, obtained as a ``concatenation'' of the curves in $\PPP_1$ and those in $\PPP_2$. We begin by defining the concatenation of two curves.
\begin{definition}[(Concatenation of two curves)]
Given two curves $\gamma_1,\gamma_2\in \Lip$ such that $T(\gamma_1)<\infty$ and $e_\infty(\gamma_1)= e_0(\gamma_2)$, we define their \emph{concatenation} $c(\gamma_1, \gamma_2) \in \Lip$ as
\begin{equation*}
c(\gamma_1, \gamma_2)(t) := 
\begin{cases}
\gamma_1(t) \qquad &\mbox{if } t\leq T(\gamma_1)
\\
\gamma_2(t-T(\gamma_1)) \qquad &\mbox{if }t> T(\gamma_1).
\end{cases}
\end{equation*} 
We now extend the map $c$ to any pair $(\gamma_1, \gamma_2)$ of Lipschitz curves defining the concatenation map $conc: \Lip \times \Lip \to \Lip$ as follows:
\begin{equation*}
conc(\gamma_1, \gamma_2) := 
\begin{cases}
c(\gamma_1, \gamma_2) \qquad &\mbox{if  $T(\gamma_1)<\infty$ and $e_\infty(\gamma_1)= \gamma_2(0)$}
\\
0 \qquad &\mbox{otherwise},
\end{cases}
\end{equation*} 
where $0$ denotes the constant curve at the point $0$.
\end{definition}

The concatenation induces an analogous operation on traffic plans, which is in general a multi-map.
\begin{definition}[(Set of concatenations of two given traffic plans)]\label{defn:set-of-conc}
	Let $\mu^-, \nu, \mu^+ \in \M(X)$ with the same total mass. Let $\PPP_1 \in \TP(\mu^-, \nu)$ and $\PPP_2 \in \TP(\nu, \mu^+)$. A concatenation of $\PPP_1$ and $\PPP_2$ is any traffic plan of the form $\PPP = \conc_\# P$ which belongs to $\TP(\mu^-,\mu^+)$ and where $P \in \M(\Lip\times \Lip )$ satisfies
	
	\begin{enumerate}
	\item for $P$-a.e. $(\gamma_1, \gamma_2)$, we have that $\gamma_1$ is eventually constant and $e_\infty(\gamma_1)= \gamma_2(0)$.
	\item $(\rho_i)_\# P = \PPP_i$ for $i=1,2$, where $\rho_1, \rho_2 : \Lip \times \Lip \to \Lip$ denote the projections on the first and second  component, respectively.
\end{enumerate}
	
\end{definition}

We notice that the set of $P$'s that generate a concatenation of $\PPP_1$ and $\PPP_2$ as in Definition~\ref{defn:set-of-conc} is convex, since (1) and (2) are stable under convex combinations.

One may wonder why we give the definition of the set of concatenations instead of choosing a more canonical representative in this set (for instance, the one described in Lemma~\ref{lemma:all-you-want-on-conc}
(4) below). The reason is hidden in Lemma~\ref{lemma:all-you-want-on-conc}
(5) below: the concatenation, seen as a multi-map is stable by addition. This is not the case, in general, if one fixes a selection of the multi-map. 

%Definire una concatenation come un TP che soddisfa le due

The existence of a concatenation satisfying Definition~\ref{defn:set-of-conc} is trivial in some simple cases: for instance, if we disintegrate $\PPP_1$ and $\PPP_2$ as 
$$\PPP_i = \nu \otimes \PPP^x_i \qquad \mbox{ for } i=1,2,$$
with respect to $e_\infty$ and $e_0$ respectively, and if we assume that, for $\nu$-a.e. $x$, $\PPP^x_1$ (resp $\PPP^x_2$) is supported on a single curve ending in (resp. starting at) the point $x$, then the (unique) concatenation of $\PPP_1$ and $\PPP_2$ is obtained by concatenating for $\nu$-a.e. $x$ the unique curve in the support of $\PPP_1^x$ (that ends in $x$) with the unique curve in the support of $\PPP_2^x$ (that starts at $x$). 

When we drop the assumption that $\PPP^x_1 $ and $\PPP^x_2$ are Dirac deltas for $\nu$-a.e. $x$, the operation is more involved, and the set of all concatenations has infinitely many elements. In the following lemma, we summarize the properties of the set of concatenations. 
\begin{lemma}[(Properties of the set of concatenations)]\label{lemma:all-you-want-on-conc}
Let $\mu^-, \nu, \mu^+ \in \PP(X)$. Let $\PPP_1 \in \TP(\mu^-, \nu)$, $\PPP_2 \in \TP(\nu, \mu^+)$ and let $\PPP$ be any concatenation of $\PPP_1$ and $\PPP_2$. Then
\begin{enumerate}
	\item we have the following inequalities regarding the multiplicities
	\begin{equation}
	\label{ts:density-conc}
	\max\{ |z|_{\PPP_1}, |z|_{\PPP_2}\} \leq |z|_\PPP \leq |z|_{\PPP_1}+ |z|_{\PPP_2} \qquad \mbox{for every }z \in \R^d;
	\end{equation}
	\item for every $\phi:\R^d \mapsto [0,\infty)$ Borel function, setting $
	A_{\phi}(\gamma):= \int_{0}^{\infty} \phi(\gamma(t)) |\dot \gamma(t)|\, dt
	$, we have
	$$\int_{\Lip}A_{\phi}(\gamma) \, d \PPP(\gamma) = \int_{\Lip}A_{\phi}(\gamma) \, d \PPP_1(\gamma) + \int_{\Lip}A_{\phi}(\gamma) \, d \PPP_2(\gamma);
	$$
	\item we have the energy bounds
	\begin{equation}
	\label{ts:ts-conc}
	\E(\PPP) \leq \E(\PPP_1)+\E(\PPP_2), \qquad \MM(\PPP) \leq \MM(\PPP_1)+\MM(\PPP_2);
	\end{equation}
	\item the set of concatenations of $\PPP_1$ and $\PPP_2$ is nonempty;
	\item the set of concatenations is stable under addition: if $\tilde\PPP_1 \in \TP(\tilde\mu^-, \tilde\nu)$, $\tilde\PPP_2 \in \TP(\tilde\nu, \tilde\mu^+)$ and $\tilde\PPP$ is a concatenation of $\tilde\PPP_1$ and $\tilde\PPP_2$, then $\PPP+\tilde \PPP$ is a concatenation of $\PPP_1+\tilde\PPP_1$ and $\PPP_2+\tilde\PPP_2$.
\end{enumerate}
\end{lemma}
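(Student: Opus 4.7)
The plan is to prove the five points essentially in the order (4), (2), (1), (3), (5), since (4) gives a concrete object to work with and (2)--(3) rely on the identities one develops while constructing concatenations.

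\medskip

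\textbf{Step 1 (Key pointwise identities for concatenations of curves).} I would first record two elementary identities that hold whenever $T(\gamma_1)<\infty$ and $e_\infty(\gamma_1)=\gamma_2(0)$, so that $c(\gamma_1,\gamma_2)$ is well-defined: (i) the image satisfies $c(\gamma_1,\gamma_2)(\R^+)=\gamma_1(\R^+)\cup \gamma_2(\R^+)$; and (ii) for every nonnegative Borel $\phi$,
\[
A_\phi(c(\gamma_1,\gamma_2))=A_\phi(\gamma_1)+A_\phi(\gamma_2),
\]
which is simply the change of variable $t\mapsto t-T(\gamma_1)$ applied to the integral defining $A_\phi$ on $[T(\gamma_1),\infty)$.

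\medskip

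\textbf{Step 2 (Existence, point (4)).} I would disintegrate $\PPP_1$ with respect to $e_\infty$ and $\PPP_2$ with respect to $e_0$, writing
\[
\PPP_1=\nu\otimes \PPP_1^x,\qquad \PPP_2=\nu\otimes \PPP_2^x,
\]
where for $\nu$-a.e.\ $x$ the probability $\PPP_1^x$ is concentrated on curves $\gamma_1$ with $e_\infty(\gamma_1)=x$ and $\PPP_2^x$ on curves $\gamma_2$ with $\gamma_2(0)=x$. Define
\[
P:=\int_X (\PPP_1^x\otimes \PPP_2^x)\,d\nu(x)\in \M(\Lip\times\Lip).
\]
Property (1) in Definition~\ref{defn:set-of-conc} holds by construction, since for $P$-a.e.\ $(\gamma_1,\gamma_2)$ one has $e_\infty(\gamma_1)=\gamma_2(0)=x$ for some $x$, and $\gamma_1$ is eventually constant because $\PPP_1$ is a traffic plan. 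Property (2) follows from the marginalizations of the product structure: $(\rho_1)_\# P=\nu\otimes \PPP_1^x=\PPP_1$ and similarly for $\rho_2$. The resulting $\PPP:=\conc_\# P$ has marginals $\mu^-$ and $\mu^+$ by Step~1(i) applied to $e_0,e_\infty$, so $\PPP\in\TP(\mu^-,\mu^+)$.

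\medskip

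\textbf{Step 3 (Additivity of $A_\phi$, point (2), and multiplicity bounds, point (1)).} For any concatenation $\PPP=\conc_\# P$ with $P$ as in Definition~\ref{defn:set-of-conc}, the change of variables yields
\[
\int_{\Lip}A_\phi(\gamma)\,d\PPP(\gamma)=\int A_\phi(\conc(\gamma_1,\gamma_2))\,dP,
\]
and on the $P$-full-measure set where $\conc=c$, Step~1(ii) together with $(\rho_i)_\# P=\PPP_i$ gives (2). For the multiplicities, fix $z\in\R^d$ and set $E_z:=\{\gamma\in\Lip:z\in\gamma(\R^+)\}$. By Step~1(i), up to a $P$-null set,
\[
\conc^{-1}(E_z)=\{(\gamma_1,\gamma_2):\gamma_1\in E_z\}\cup\{(\gamma_1,\gamma_2):\gamma_2\in E_z\}=\rho_1^{-1}(E_z)\cup\rho_2^{-1}(E_z).
\]
Applying $P$ and using $(\rho_i)_\# P=\PPP_i$, inclusion-exclusion gives
\[
\max\{|z|_{\PPP_1},|z|_{\PPP_2}\}\le |z|_\PPP=P(\rho_1^{-1}(E_z)\cup\rho_2^{-1}(E_z))\le |z|_{\PPP_1}+|z|_{\PPP_2},
\]
which is (1).

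\medskip

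\textbf{Step 4 (Energy bounds, point (3), and stability by sums, point (5)).} For (3), apply (2) with $\phi(x)=|x|_\PPP^{\alpha-1}$ (with the convention $0^{\alpha-1}=+\infty$) to get
\[
\E(\PPP)=\int A_{|\cdot|_\PPP^{\alpha-1}}d\PPP_1+\int A_{|\cdot|_\PPP^{\alpha-1}}d\PPP_2,
\]
and then use $|z|_\PPP\ge |z|_{\PPP_i}$ from (1) together with $\alpha-1<0$ to replace $|\cdot|_\PPP^{\alpha-1}$ by $|\cdot|_{\PPP_i}^{\alpha-1}$ in each of the two terms, obtaining $\E(\PPP)\le \E(\PPP_1)+\E(\PPP_2)$. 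For $\MM$, use the upper bound in (1) and the subadditivity $(a+b)^\alpha\le a^\alpha+b^\alpha$ for $\alpha\in[0,1)$, integrating on a rectifiable set $E$ containing the supports produced by Theorem~\ref{Theoret}. Finally, (5) is immediate: if $P$ and $\tilde P$ witness the concatenations $\PPP$ and $\tilde \PPP$, then $P+\tilde P$ satisfies Definition~\ref{defn:set-of-conc}(1) $P$-a.e.\ and $\tilde P$-a.e., and its projections are $\PPP_i+\tilde\PPP_i$, so $\conc_\#(P+\tilde P)=\PPP+\tilde\PPP$ is a concatenation of the sums.

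\medskip

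The only mildly delicate step is (4): one has to be sure that the product disintegration $\PPP_1^x\otimes \PPP_2^x$ can be integrated against $\nu$ in a measurable way, which is standard from the disintegration theorem. Everything else is a bookkeeping application of Step~1.
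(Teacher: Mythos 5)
Your proposal is correct and follows essentially the same route as the paper: the product disintegration $P=\nu\otimes(\PPP_1^x\times\PPP_2^x)$ for existence, the indicator/test-function identity $1_{z\in Im(\conc(\gamma_1,\gamma_2))}=\max\{1_{z\in Im(\gamma_1)},1_{z\in Im(\gamma_2)}\}$ for the multiplicity bounds, the change of variables for the additivity of $A_\phi$, the choice $\phi=|\cdot|_{\PPP}^{\alpha-1}$ combined with (1) for the energy bound, and $P+\tilde P$ for stability under addition. The only cosmetic difference is the order of the steps and your appeal to the image identity where the paper verifies the marginals of $\conc_\# P$ by a direct test-function computation with $e_0(\conc(\gamma_1,\gamma_2))=e_0(\gamma_1)$ and $e_\infty(\conc(\gamma_1,\gamma_2))=e_\infty(\gamma_2)$.
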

\begin{proof}
By Definition \ref{defn:set-of-conc}, there exists $P \in \PP(\Lip\times \Lip )$ such that
\begin{equation}
\label{eqn:defn-chi-P}
\int \varphi(\gamma) \, d \PPP(\gamma) = \int \varphi(conc(\gamma_1, \gamma_2)) \, dP(\gamma_1,\gamma_2).
\end{equation}
for every Borel function $\varphi: \Lip \to \R$.

We claim that $\PPP$ satisfies \eqref{ts:density-conc}.
Indeed, given $z \in \R^d$ we apply \eqref{eqn:defn-chi-P} with $\varphi(\gamma) = 1_{z\in Im(\gamma)}$ and we notice that $1_{z\in Im(conc(\gamma_1, \gamma_2))} = \max\{ 1_{z\in Im(\gamma_1)} , 1_{z\in Im(\gamma_2)} \}$ to get
$$
|z|_\PPP 
=
\int 1_{x\in Im(\gamma)} \, d \PPP(\gamma) = \int \max\{ 1_{x\in Im(\gamma_1)} , 1_{x\in Im(\gamma_2)} \} \, d P(\gamma_1,\gamma_2).
$$
We estimate
\begin{equation}
\begin{split}
|z|_\PPP 
&\geq 
\int  1_{x\in Im(\gamma_1)}  \, d P(\gamma_1,\gamma_2)=\int  1_{x\in Im(\gamma_1)}  \, d (\rho_1)_\#P(\gamma_1)=\int  1_{x\in Im(\gamma_1)}  \, d \PPP_1(\gamma_1)
= |z|_{\PPP_1} 
\end{split}
\end{equation}
The same argument shows that $|z|_\PPP \geq |z|_{\PPP_2}$. In order to prove the second inequality in \eqref{ts:density-conc} we employ the fact that $\max\{ 1_{z\in Im(\gamma_1)} , 1_{z\in Im(\gamma_2)}\}  \leq  1_{z\in Im(\gamma_1)} +1_{z\in Im(\gamma_2)} $ and the same computations as above. This concludes the proof of (1). 
\bigskip

In order to prove (2), we observe that for every $\gamma_1$ and $\gamma_2$ with $\gamma_1(\infty)=\gamma_2(0)$, we can write
\begin{equation}\label{sommabuona}
\begin{split}
A_{\phi}(conc(\gamma_1,\gamma_2))&= \int_{0}^{\infty} \phi(conc(\gamma_1,\gamma_2)(t))\Big| {\frac{d}{dt}conc(\gamma_1,\gamma_2)}( t)\Big| dt\\
&=\int_{0}^{T(\gamma_1)} \phi(\gamma_1( t))|\dot {\gamma_1}( t)| dt+ \int_{T(\gamma_1)}^\infty \phi(\gamma_2(t-T(\gamma_1)))|\dot {\gamma_2}(t-T(\gamma_1))| dt\\
&=\int_0^\infty \phi(\gamma_1( t)) |\dot\gamma_1(t) | \, dt + \int_0^\infty \phi(\gamma_2( t)) |\dot\gamma_2(t) | \, dt=A_{\phi}(\gamma_1)+A_{\phi}(\gamma_2).
\end{split}
\end{equation}
We plug the map $\varphi:=A_{\phi}$ in equation \eqref{eqn:defn-chi-P} to deduce
\begin{equation*}
\begin{split}
\int A_{\phi}&(\gamma) \, d \PPP(\gamma) = \int A_{\phi}(conc(\gamma_1, \gamma_2)) \, dP(\gamma_1,\gamma_2)\overset{\eqref{sommabuona}}{=} \int A_{\phi}(\gamma_1)+A_{\phi}( \gamma_2) \, dP(\gamma_1,\gamma_2)\\
&=\int A_{\phi}(\gamma_1)\, dP(\gamma_1,\gamma_2)+\int A_{\phi}( \gamma_2) \, dP(\gamma_1,\gamma_2)=\int A_{\phi}(\gamma_1)\, d\PPP_1(\gamma_1)+\int A_{\phi}( \gamma_2) \, d\PPP_2(\gamma_2),
\end{split}
\end{equation*}
which gives (2).

\bigskip

Integrating the second inequality in \eqref{ts:density-conc} with respect to $\Haus^1$ along the set $\{|z|_{\PPP_1}>0\} \cup \{|z|_{\PPP_2}>0\}$ (which is $1$-rectifiable as long as $\PPP_1$ and $\PPP_2$ have finite $\alpha$-mass), we deduce the second inequality in \eqref{ts:ts-conc}. 
For the first inequality, for every $\gamma_1$ and $\gamma_2$ with $\gamma_1(\infty)=\gamma_2(0)$, we use \eqref{sommabuona} with $\phi:=|\cdot|_{\PPP}^{\alpha-1}$ 
%$$\int_{\R^+} |c(\gamma_1,\gamma_2)( t)|_{\PPP}^{\alpha-1}\Big| {\frac{d}{dt}c(\gamma_1,\gamma_2)}( t)\Big| dt$$
%$$=\int_{0}^{T(\gamma_1)} |\gamma_1( t)|_{\PPP}^{\alpha-1}|\dot {\gamma_1}( t)| dt+ \int_{T(\gamma_1)}^\infty |\gamma_2(t-T(\gamma_1))|_{\PPP}^{\alpha-1}|\dot {\gamma_2}(t-T(\gamma_1))| dt,$$
and by \eqref{ts:density-conc} we deduce
\begin{equation*}%\label{ineq}
\begin{split}
\int_0^\infty {|\conc(\gamma_1,\gamma_2)(t) |_\PPP^{\alpha-1}} \Big|\frac{d}{dt}\conc(\gamma_1,\gamma_2)(t) \Big| \, dt &{=}\int_0^\infty {|\gamma_1(t) |_\PPP^{\alpha-1}} |\dot\gamma_1(t) | \, dt
+
\int_0^\infty {|\gamma_2(t) |_\PPP^{\alpha-1}} |\dot\gamma_2(t) | \, dt
\\&{\leq}\int_0^\infty {|\gamma_1(t) |_{\PPP_1}^{\alpha-1}} |\dot\gamma_1(t) | \, dt
+
\int_0^\infty {|\gamma_2(t) |_{\PPP_2}^{\alpha-1}} |\dot\gamma_2(t) | \, dt
.
\end{split}
\end{equation*}
Integrating the previous inequality with respect to $P$, we get that 
\begin{equation*}
\begin{split}
\E(\PPP) &= \int\int_0^\infty {|\gamma(t) |_\PPP^{\alpha-1}} |\dot\gamma(t)| \, dt \, d \PPP(\gamma)= \int\int_0^\infty {|\conc(\gamma_1,\gamma_2) |_\PPP^{\alpha-1}} \Big|\frac{d}{dt}\conc(\gamma_1,\gamma_2) \Big| \, dt \, d P(\gamma_1,\gamma_2)\\
&%\overset{\eqref{ineq}}
{\leq} \int \int_0^\infty {|\gamma_1(t) |_{\PPP_1}^{\alpha-1}} |\dot\gamma_1(t) | \, dt  \, d P(\gamma_1,\gamma_2) +
\int \int_0^\infty {|\gamma_2(t) |_{\PPP_2}^{\alpha-1}} |\dot\gamma_2(t) | \, dt \, d P(\gamma_1,\gamma_2) 
\\
&= \int \int_0^\infty {|\gamma_1(t) |_{\PPP_1}^{\alpha-1}} |\dot\gamma_1(t) | \, dt  \, d \PPP_1(\gamma_1) +
\int \int_0^\infty {|\gamma_2(t) |_{\PPP_2}^{\alpha-1}} |\dot\gamma_2(t) | \, dt \, d \PPP_2(\gamma_2) 
\\
&= \E(\PPP_1)+ \E(\PPP_2),
\end{split}
\end{equation*}
which yields (3).

\bigskip

To show (4), we consider the disintegration of $\PPP_1$ and $\PPP_2$ with respect to the common marginal $\nu$. Let $\{\PPP^x_1\}_{x\in X}$ and $\{\PPP^x_2\}_{x\in X}$ be families of probability measures representing the disintegration of $\PPP_1$ and $\PPP_2$ with respect to $\nu$. In other words, for every $x\in \R^d$, $\PPP^x_1 \in \PP(\Lip)$ is supported on curves that end at $x$ and $\PPP^x_2$ is supported on curves that begin at $x$, and
$$\PPP_1 = \nu \otimes \PPP_1^x, \qquad \PPP_2 = \nu \otimes \PPP_2^x.$$

We define
$$P:=\nu \otimes (\PPP_1^x \times \PPP_2^x),$$ 
namely for every $C^0$ test function $\varphi: \Lip \times \Lip \to \R$
\begin{equation}
\label{eqn:defn-chi-c}
\int \varphi(\gamma_1,\gamma_2) \, d P(\gamma_1,\gamma_2) = \int \varphi(\gamma_1, \gamma_2) \, d \PPP_1^x(\gamma_1)\, d \PPP_2^x(\gamma_2) \, d\nu(x).
\end{equation}
The measure $P$ satisfies all the properties in Definition~\ref{defn:set-of-conc}; indeed $(\rho_i)_\# P = \nu \otimes \PPP_i^x = \PPP_i$ and for $\nu$-a.e. $x$, for $\PPP_1^x \times \PPP_2^x$-a.e. $(\gamma_1,\gamma_2)$ we have that $\gamma_1(\infty) = x = \gamma_2(0)$. We define 
$$\qquad \PPP := \conc_\# P.$$
For every  test function $\psi:X\to\R$ we can compute
\begin{equation}
\begin{split}
\int \psi(x)\, d(e_0)_\# \PPP(x)&= \int \psi(e_0(\gamma))\, d \PPP(\gamma)= \int \psi(e_0(conc(\gamma_1,\gamma_2)))\, d P(\gamma_1,\gamma_2)\\
&\overset{\eqref{eqn:defn-chi-c}}{=} \int \psi(e_0(conc(\gamma_1,\gamma_2)))\, d \PPP_1^x(\gamma_1)\, d \PPP_2^x(\gamma_2) \, d\nu(x)\\
&= \int \psi(e_0(\gamma_1))\, d \PPP_1^x(\gamma_1) \, d\nu(x){=} \int \psi(e_0(\gamma_1))\, d \PPP_1(\gamma_1) \\
&= \int \psi(x)\,d(e_0)_\# \PPP_1(x)= \int \psi(x)\,d\mu^-(x)
\end{split}
\end{equation} 
and analogously
\begin{equation}
\begin{split}
\int \psi(x)\, d(e_\infty)_\# \PPP(x)&\overset{\eqref{eqn:defn-chi-c}}{=} \int \psi(e_\infty(conc(\gamma_1,\gamma_2)))\, d \PPP_1^x(\gamma_1)\, d \PPP_2^x(\gamma_2) \, d\nu(x)%\\&= \int \psi(e_\infty(\gamma_2))\, d \PPP_2^x(\gamma_2) \, d\nu(x)
\\ &{=} \int \psi(e_\infty(\gamma_2))\, d \PPP_2(\gamma_2) = \int \psi(x)\,d(e_\infty)_\# \PPP_2(x)= \int \psi(x)\,d\mu^+(x).
\end{split}
\end{equation}
 We deduce that $\PPP \in \TP(\mu^-,\mu^+)$ and we conclude that $\PPP$ is a concatenation of $\PPP_1$ and $\PPP_2$.
%{\color{green} (non capisco cosa vuol dire: non dovresti definirmi un test? cioe' una funzione $\phi:\Lip\times\Lip\to\R$). ERO STATO TROPPO VELOCE, HO AGGIUNTO LE DUE EQUAZIONI SOPRA 3.11 E 3.12 PER SPIEGARE BENE MARGINALE SINISTRO E DESTRO. TI TORNA?}

\bigskip

To prove (5), let $P, \tilde P \in \M(\Lip\times \Lip )$ be associated to $\PPP, \tilde \PPP$ as in Definition~\ref{defn:set-of-conc}. Then $P+ \tilde P$ verifies the properties of Definition~\ref{defn:set-of-conc} (where in property (2) we replace $\PPP_i$ with $\PPP_i+ \tilde \PPP_i$) and therefore $\PPP+\tilde{\PPP}=  \conc_\# (P+\tilde P) \in \TP(\mu^-+ \tilde\mu^-,\mu^+ + \tilde\mu^+)$ is a concatenation of $\PPP_1+\tilde\PPP_1$ and $\PPP_2+\tilde\PPP_2$.
\end{proof}

%Questo lemma sara' poi un caso particolare del successivo

When dealing with the mailing problem, the concatenation of two traffic plans needs to take into account the coupling between the initial and the final measures. In general it is not true that, given two traffic plans $\PPP_1$ and $\PPP_2$ with a common marginal and given a coupling $\pi$ between the initial measure of $\PPP_1$ and the final measure of $\PPP_2$, there exists a concatenation of $\PPP_1$ and $\PPP_2$ with coupling $\pi$. On the other hand, in the special case when the common marginal is a Dirac delta, we are allowed to prescribe any coupling $\pi$, as shown in the next lemma.

\begin{lemma}[(Concatenation with prescribed coupling through a Dirac delta)]\label{lemma:conc}
Let $\PPP_1 \in \TP(\mu^-, \delta_{x_0})$, $\PPP_2 \in \TP(\delta_{x_0}, \mu^+)$, and let $\pi \in \PP(X \times X)$ with $(e_1)_\# \pi = \mu^-$ and $(e_2)_\# \pi = \mu^+$. Then there exists a concatenation $\PPP$  of $\PPP_1$ and $\PPP_2$ such that
$$\PPP \in \TP(\pi).$$
\end{lemma}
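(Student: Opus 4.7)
My plan is to construct the desired concatenation by performing independent disintegrations of $\PPP_1$ and $\PPP_2$ with respect to the endpoints not pinned at $x_0$, and then coupling them via $\pi$ at the level of these free endpoints. Concretely, I would disintegrate $\PPP_1$ with respect to $e_0$ to obtain a family $\{\PPP_1^x\}_{x \in X}$ of probability measures, each supported on curves that start at $x$ and end at $x_0$, so that $\PPP_1 = \mu^-\otimes \PPP_1^x$; similarly, disintegrate $\PPP_2$ with respect to $e_\infty$ to obtain $\{\PPP_2^y\}_{y \in X}$, each supported on curves starting at $x_0$ and ending at $y$, so that $\PPP_2 = \mu^+\otimes \PPP_2^y$.

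Next I would define the coupling measure on $\Lip \times \Lip$ by
\[
P := \pi \otimes (\PPP_1^x \times \PPP_2^y),
\]
meaning that for every test function $\varphi:\Lip\times\Lip\to\R$,
\[
\int \varphi(\gamma_1,\gamma_2)\, dP(\gamma_1,\gamma_2) = \int\!\!\int \varphi(\gamma_1,\gamma_2)\, d\PPP_1^x(\gamma_1)\, d\PPP_2^y(\gamma_2)\, d\pi(x,y).
\]
I would then set $\PPP := \conc_\# P$. To verify that $P$ realizes a concatenation in the sense of Definition~\ref{defn:set-of-conc}, I note that for $P$-a.e.\ $(\gamma_1,\gamma_2)$, $\gamma_1$ is eventually constant (as it lies in $\supp \PPP_1$) and both $e_\infty(\gamma_1)=x_0$ and $\gamma_2(0)=x_0$ by construction, so property (1) holds. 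For (2), $(\rho_1)_\#P = \PPP_1$ follows by integrating out the $\gamma_2$ variable and using $(e_1)_\#\pi = \mu^-$: $\int \phi(\gamma_1)\, dP = \int \phi(\gamma_1)\, d\PPP_1^x(\gamma_1)\, d\mu^-(x) = \int\phi\, d\PPP_1$; symmetrically $(\rho_2)_\#P=\PPP_2$ using $(e_2)_\#\pi = \mu^+$.

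Finally, to check that $\PPP \in \TP(\pi)$, I compute, for every test function $\psi:X\times X \to \R$,
\[
\int \psi\, d(e_0,e_\infty)_\#\PPP = \int \psi\bigl(e_0(\conc(\gamma_1,\gamma_2)),\, e_\infty(\conc(\gamma_1,\gamma_2))\bigr)\, dP(\gamma_1,\gamma_2).
\]
Since $e_\infty(\gamma_1)=\gamma_2(0)=x_0$ for $P$-a.e.\ pair, the concatenation is well-defined and $e_0(\conc(\gamma_1,\gamma_2))=e_0(\gamma_1)$ and $e_\infty(\conc(\gamma_1,\gamma_2))=e_\infty(\gamma_2)$. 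Using that $\PPP_1^x$ is supported on curves with $e_0=x$ and $\PPP_2^y$ on curves with $e_\infty=y$, the integral reduces to $\int \psi(x,y)\, d\pi(x,y)$, so $(e_0,e_\infty)_\#\PPP=\pi$.

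The proof is essentially a straightforward computation with disintegrations; there is no significant obstacle because the common marginal being a Dirac delta $\delta_{x_0}$ trivializes the compatibility condition $e_\infty(\gamma_1)=\gamma_2(0)$ — it holds for every pair in the supports of the two disintegrations, which is exactly what makes the product $\PPP_1^x\times\PPP_2^y$ admissible for arbitrary couplings $\pi$. The only care needed is writing the two disintegrations with respect to the correct evaluation maps (not the common marginal) so that the free endpoints become independently controllable.
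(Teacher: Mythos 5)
Your proof is correct and follows essentially the same route as the paper's: both disintegrate $\PPP_1$ with respect to $e_0$ and $\PPP_2$ with respect to $e_\infty$, form $P := \pi \otimes (\PPP_1^x \times \PPP_2^y)$, and push forward by $\conc$. If anything, you verify the two defining properties of a concatenation (in particular $(\rho_i)_\# P = \PPP_i$) more explicitly than the paper, which simply asserts them.
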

\begin{proof}
Let $\{ \PPP^x_1\}_{x\in X}$ be a $1$-parameter family of probability measures representing the disintegration of $\PPP_1$ with respect to the first marginal. In particular, for $\mu^-$-a.e. $x$, $\PPP^x_1$ is supported on curves that begin at $x$. Similarly, let $\{\PPP_2(x)\}_{x\in X}$ be a $1$-parameter family of probability measures representing the disintegration of $\PPP_2$ with respect to the second marginal
$$\PPP_1 = \mu^-(x) \otimes \PPP^x_1 \qquad \PPP_2 = \mu^+(x) \otimes \PPP^x_2.$$

We define the traffic plan $\PPP$ through the disintegration
$$P := \pi(x,y) \otimes(\PPP_1^x \times \PPP_2^y), \qquad \PPP:=  conc_\# P
,$$
recalling that this means that the measure $\PPP$ is the unique measure that satisfies for every bounded Borel test function $\varphi: \Lip \to \R$
\begin{equation}
\label{eqn:defn-chi-conc1}
\int \varphi(\gamma) \, d \PPP(\gamma) = \int \varphi(conc(\gamma_1, \gamma_2)) \, d \PPP_1^x(\gamma_1)\, d \PPP_2^y(\gamma_2) \, d\pi(x,y).
\end{equation}
We notice that $P$ satisfies the properties in Definition~\ref{defn:set-of-conc}.

We observe that $\PPP \in \TP(\pi)$. To verify this, we test \eqref{eqn:defn-chi-conc1} with $\varphi \circ (e_0,e_\infty):\Lip \to \R$ 
\begin{equation*}
\begin{split}
\int \varphi(x,y) \, d(e_0,e_\infty )_\# \PPP(x,y) &= \int \varphi(e_0(\gamma), e_\infty(\gamma)) \, d \PPP(\gamma) \\&= \int \varphi(\gamma_1(0), e_\infty(\gamma_2)) \, d\PPP_1^x(\gamma_1)\, d\ \PPP_2^y(\gamma_2) \, d\pi(x,y)
\\&= \int \varphi(x, y) \, d \PPP_1^x(\gamma_1)\, d\ \PPP_2^y(\gamma_2) \, d\pi(x,y)
=\int \varphi(x,y) \, d\pi(x,y).
\end{split}
\end{equation*}
This shows that $\PPP$ is a concatenation of $\PPP_1$ and $\PPP_2$ with $\PPP \in \TP(\pi)$.% and concludes the proof of the Lemma.
\end{proof}
With the previous lemma we prove the equivalent of Theorem \ref{irrigability} for the mailing problem.
\begin{corollary}[(Upper bound on the energy for the mailing problem)]\label{cor:small-balls-en-con-assegn} Let $\alpha>1-\frac{1}{d}$. Then there exists constant $c(d,\alpha)$ such that if $\pi \in \M(X \times X)$ is supported on a set of diameter less than or equal than $L$, then 
	$$\E(\pi) \leq  c(d,\alpha) \pi(X\times X)^\alpha L.$$
\end{corollary}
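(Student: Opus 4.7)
The plan is to reduce to Theorem~\ref{irrigability} by factoring the mailing problem through a single point: one transports $\mu^-:=(e_1)_\sharp\pi$ to a Dirac mass $\delta_{x_0}$ at some suitably chosen $x_0$, then $\delta_{x_0}$ to $\mu^+:=(e_2)_\sharp\pi$, and finally concatenates the two pieces so that the resulting coupling is exactly $\pi$. The crucial feature that makes this work is that the intermediate marginal is a single point mass, a situation in which Lemma~\ref{lemma:conc} allows us to prescribe the coupling freely.

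Concretely, let $m:=\pi(X\times X)$; the case $m=0$ is trivial, so assume $m>0$. Set $\tilde\pi:=\pi/m$ and $\tilde\mu^\pm:=\mu^\pm/m\in\PP(X)$. Using the assumption that $\supp(\pi)$ has diameter at most $L$, one picks a point $x_0\in X$ and a convex subset $K\subseteq X$ of diameter bounded by a constant multiple of $L$ containing $\supp(\tilde\mu^-)\cup\supp(\tilde\mu^+)\cup\{x_0\}$. Applying Theorem~\ref{irrigability} inside $K$ to the probability marginals $(\tilde\mu^-,\delta_{x_0})$ and $(\delta_{x_0},\tilde\mu^+)$ produces traffic plans $\tilde\PPP_1\in\TP(\tilde\mu^-,\delta_{x_0})$ and $\tilde\PPP_2\in\TP(\delta_{x_0},\tilde\mu^+)$ with $\E(\tilde\PPP_i)\le c_1(d,\alpha)\,L$. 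Lemma~\ref{lemma:conc} then yields a concatenation $\tilde\PPP\in\TP(\tilde\pi)$ of $\tilde\PPP_1$ and $\tilde\PPP_2$, for which Lemma~\ref{lemma:all-you-want-on-conc}(3) gives
$$\E(\tilde\PPP)\le \E(\tilde\PPP_1)+\E(\tilde\PPP_2)\le 2c_1(d,\alpha)\,L.$$
To finish, one rescales back: the measure $\PPP:=m\tilde\PPP$ lies in $\TP(\pi)$, and the $\alpha$-energy is $\alpha$-homogeneous in $\PPP$, since the multiplicities scale linearly with $\PPP$ (contributing $m^{\alpha-1}$) and the outer integration against $\PPP$ contributes a further $m$. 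Therefore $\E(\pi)\le\E(\PPP)=m^\alpha\E(\tilde\PPP)\le c(d,\alpha)\,L\,m^\alpha$, which is the desired bound.

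The only delicate point, and the reason why Lemma~\ref{lemma:conc} is needed rather than the more general Lemma~\ref{lemma:all-you-want-on-conc}(4), is that an arbitrary concatenation of $\tilde\PPP_1$ and $\tilde\PPP_2$ produces a traffic plan in $\TP(\tilde\mu^-,\tilde\mu^+)$ but provides no control on its coupling: a mass particle that should travel from a given $x$ to a given $y$ under $\tilde\pi$ might, after concatenation, end up at a different point with the same second marginal. The fact that the intermediate marginal is a Dirac mass at $x_0$ decouples the two halves of the transport and lets one reassemble them according to any prescribed coupling, which is precisely what distinguishes the mailing problem from the standard branched transportation problem.
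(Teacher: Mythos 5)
Your argument is correct and takes essentially the same route as the paper's proof: fix a point $x_0$, apply Theorem~\ref{irrigability} to produce the two halves $\TP(\mu^-,\delta_{x_0})$ and $\TP(\delta_{x_0},\mu^+)$, concatenate them with the prescribed coupling via Lemma~\ref{lemma:conc}, and conclude by the energy subadditivity of Lemma~\ref{lemma:all-you-want-on-conc}(3). Your normalization to probability measures followed by rescaling with the $\alpha$-homogeneity $\E(m\tilde\PPP)=m^{\alpha}\E(\tilde\PPP)$ is a slightly more careful way of producing the factor $\pi(X\times X)^{\alpha}$ than the paper's direct invocation of Theorem~\ref{irrigability}, but the substance is identical.
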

\begin{proof}
	Let $x_0 \in X $ be any point in the support of $(e_1)_\#\pi$. By Theorem~\ref{irrigability} there exist two a traffic plans $\PPP_1\in \TP((e_1)_\#\pi, \delta_{x_0})$ and $\PPP_2\in \TP(\delta_{x_0}, (e_2)_\#\pi)$ satisfying 
	$$\E(\PPP_i)\leq C_{\alpha,d} [(e_1)_\#\pi](X)^\alpha L= C_{\alpha,d} \pi(X\times X)^\alpha L.$$
	Applying Lemma \ref{lemma:conc} to $\PPP_1$ and $\PPP_2$ and Lemma \ref{lemma:all-you-want-on-conc} (3), we find that 
	$$\E(\pi) \leq C_{\alpha,d} \big(\mu^-(X)^\alpha+\mu^+( X)^\alpha\big) L = 2 C_{\alpha,d}\pi(X\times X)^\alpha L.\qedhere$$
\end{proof}

\begin{remark}\label{semtpc}
A consequence of Corollary \ref{cor:small-balls-en-con-assegn} is that, above the critical threshold, there exists a constant $C=C(X,d, \alpha)$ such that if an optimal traffic plan $\PPP \in \OTP(\pi_{\PPP})$ of unit mass is supported on curves which are parametrized by arc-length, then $\PPP\in\TP_C$. Indeed we have%can compute
$$c(d,\alpha) \text{diam}(X) \geq \E(\PPP)=\int_{\Lip} \int_{\R^+} |\gamma( t)|_{\PPP}^{\alpha-1}|\dot \gamma(t)| dt\, d\PPP(\gamma)\geq \int_{\Lip} T(\gamma)\, d\PPP(\gamma).$$
In particular, the energy $\E$ is always lower semicontinuous along a converging sequence of such optimal traffic plans (compare with Lemma \ref{TPC}).
\end{remark}

We conclude this section on concatenations with a generalization of Lemma \ref{lemma:conc}. Roughly speaking, it says that, if we have three traffic plans $\PPP_1, \PPP_2, \PPP_3$, which are ``admissible'' for a concatenation, i.e. $(e_\infty)_\#\PPP_i= (e_0)_\#\PPP_{i+1}$, for $i=1,2$, and moreover ${(e_\infty)}_\sharp\PPP_2$ is a Dirac delta at a point $x_0$, then we can realize any coupling between the initial measure $(e_0)_\#\PPP_{1}$ and the final measure $(e_\infty)_\#\PPP_{3}$, via a suitable concatenation of the three traffic plans. Generalizing this construction, one could prove that whenever two or more traffic plan which can be concatenated have a Dirac delta in one of the intermediate marginals, then every coupling between the first and the last marginal can be realized by a concatenation.

\begin{lemma}\label{lemma:finiamola}
	Let $x_0\in \R^d$, $\mu_1,\mu_2, \mu_3$ be measures on $\R^d$ with the same mass. Let $\PPP_0$ be a traffic plan between $\mu_1$ and $\mu_3$, $\PPP_1$ between $\mu_1$ and $\mu_2$, $\PPP_2$ between $\mu_2$ and $\delta_{x_0}$, $\PPP_3$ between $\delta_{x_0}$ and $\mu_3$.
	
	Then there exists a concatenation $\PPP_4$ between $\PPP_2$ and $\PPP_3$ and a concatenation $\PPP_5$ between $\PPP_1$ and $\PPP_4$ such that $\PPP_5$ has the same coupling as $\PPP_0$.	
\end{lemma}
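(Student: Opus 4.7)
The plan is to construct a single ``triple concatenation'' measure $Q \in \M(\Lip^3)$ realizing all three given traffic plans simultaneously, and then to read off $\PPP_4$ and $\PPP_5$ from it. More precisely, I aim to produce $Q$ whose marginal on the $i$-th factor of $\Lip^3$ is $\PPP_i$, which satisfies the matchings $e_\infty(\gamma_1) = \gamma_2(0)$ and $e_\infty(\gamma_2) = \gamma_3(0) = x_0$ for $Q$-a.e.\ $(\gamma_1,\gamma_2,\gamma_3)$, and whose endpoint coupling $(e_0(\gamma_1), e_\infty(\gamma_3))_\sharp Q$ equals $\pi_0 := (e_0, e_\infty)_\sharp \PPP_0$. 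The essential structural feature that makes this possible is that the middle intermediate marginal is the Dirac mass $\delta_{x_0}$: this ``hub'' lets me freely match any $\PPP_2$-curve ending at $x_0$ with any $\PPP_3$-curve starting at $x_0$, decoupling the intermediate pairing at $y \in \supp \mu_2$ from the pairing with the final endpoint $z \in \supp \mu_3$.

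Once $Q$ is available, I would let $P_{23}$ be the $(\gamma_2,\gamma_3)$-marginal of $Q$, so that $\PPP_4 := \conc_\sharp P_{23}$ is a concatenation of $\PPP_2$ and $\PPP_3$ in the sense of Definition~\ref{defn:set-of-conc}. Then, defining $\Phi: \Lip^3 \to \Lip \times \Lip$ by $\Phi(\gamma_1, \gamma_2, \gamma_3) := (\gamma_1, \conc(\gamma_2, \gamma_3))$, the measure $P_{15} := \Phi_\sharp Q$ has first marginal $\PPP_1$, second marginal $\conc_\sharp P_{23} = \PPP_4$, and satisfies the matching $e_\infty(\gamma_1) = \gamma_2(0) = (\conc(\gamma_2,\gamma_3))(0)$; hence $\PPP_5 := \conc_\sharp P_{15}$ is a concatenation of $\PPP_1$ and $\PPP_4$. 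By construction, $(e_0, e_\infty)_\sharp \PPP_5 = (e_0(\gamma_1), e_\infty(\gamma_3))_\sharp Q = \pi_0$, so $\PPP_5$ has the same coupling as $\PPP_0$.

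To build $Q$, I would apply the disintegration theorem (\cite[Theorem~2.28]{AFP}) to write $\pi_0 = \mu_1 \otimes \pi_0^x$ and $\pi_1 := (e_0, e_\infty)_\sharp \PPP_1 = \mu_1 \otimes \pi_1^x$ over the first marginal, $\PPP_1 = \pi_1 \otimes \PPP_1^{x,y}$ over $(e_0, e_\infty)$, $\PPP_2 = \mu_2 \otimes \PPP_2^y$ over $e_0$, and $\PPP_3 = \mu_3 \otimes \PPP_3^z$ over $e_\infty$; the conditional probabilities $\PPP_1^{x,y}, \PPP_2^y, \PPP_3^z$ are concentrated, respectively, on curves from $x$ to $y$, from $y$ to $x_0$, and from $x_0$ to $z$. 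Then I would define $Q$ by the iterated integral
\begin{equation*}
\int \varphi \, dQ := \int \varphi(\gamma_1, \gamma_2, \gamma_3) \, d\PPP_1^{x,y}(\gamma_1) \, d\PPP_2^y(\gamma_2) \, d\PPP_3^z(\gamma_3) \, d\pi_1^x(y) \, d\pi_0^x(z) \, d\mu_1(x),
\end{equation*}
which morally amounts to sampling $x \sim \mu_1$ and then, conditionally and independently, $(y, \gamma_1, \gamma_2)$ and $(z, \gamma_3)$ from their natural laws. Verification of the three marginal identities is then a routine Fubini-style computation using the consistency relations $\int \pi_1^x \, d\mu_1 = \mu_2$ and $\int \pi_0^x \, d\mu_1 = \mu_3$; the matching conditions hold by the supports of the fibers; and the endpoint coupling equals $\pi_0$ because $\gamma_1(0) = x$ and $e_\infty(\gamma_3) = z$ for $Q$-a.e.\ triple. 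I expect the only real difficulty to be purely notational---careful bookkeeping of the six nested integrals---with no new analytic input needed beyond the standard Borel disintegration on the Polish space $\Lip$.
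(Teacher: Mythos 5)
Your proposal is correct and follows essentially the same route as the paper: both proofs disintegrate everything in sight, exploit the Dirac hub at $x_0$ to decouple the pairing of $y$ with $x$ from the pairing of $z$ with $x$, and then glue by making $y$ and $z$ conditionally independent given $x$. The only difference is presentational — your explicit six-fold iterated integral for $Q$ is exactly the conditional-independence construction that the paper obtains by invoking the gluing lemma (Lemma~\ref{gluing}) twice at the level of point couplings before lifting to curves.
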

%CONFUSIONE DI NOTAZIONE CON $e_1$
Only in the proof of this lemma, we will use the following notation. For $1\leq j\leq i$, we use 
$$e_j: \underbrace{\R^d \times\dots\times \R^d}_{\mbox{$i$ times}} \to \R^d$$
to denote the projection on the $j$-th copy of $\R^d$.

The proof will rely on the following lemma, known as gluing lemma, which can be found in \cite[Lemma 5.5]{santambrogiobook}. This time $e_1,e_2,e_3$ are the projections on $X,Y,Z$ respectively.
\begin{lemma}\label{gluing}
Let $X, Y, Z$ be locally compact, separable metric spaces. Let $\pi_{1}\in \PP(X \times Y)$ and $\pi_{2}\in \PP(Y \times Z)$. Then there exists $\pi \in  \PP(X \times Y\times Z)$ such that $(e_1,e_2)_\#\pi = \pi_1$ and $(e_2,e_3)_\#\pi = \pi_2$. 
\end{lemma}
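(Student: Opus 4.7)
The plan is to use the gluing lemma (Lemma~\ref{gluing}) to build a three-marginal coupling $\pi\in\PP(X\times X\times X)$ with marginals $\mu_1,\mu_2,\mu_3$ that simultaneously encodes the target coupling $\pi_{\PPP_0}=(e_0,e_\infty)_\#\PPP_0$ as its $(1,3)$-marginal and $\pi_{\PPP_1}=(e_0,e_\infty)_\#\PPP_1$ as its $(1,2)$-marginal. The guiding idea is that a concatenation through a non-Dirac intermediate marginal ($\mu_2$ in our setting) does not in general allow us to prescribe the coupling between the extreme marginals, whereas Lemma~\ref{lemma:conc} does allow such prescription across the Dirac junction $\delta_{x_0}$. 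We will therefore exploit that Dirac freedom to fabricate a coupling $\sigma$ between $\mu_2$ and $\mu_3$ which, once composed through $\pi_{\PPP_1}$, yields exactly $\pi_{\PPP_0}$.

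More concretely, first I would apply Lemma~\ref{gluing} to the pair $(e_2,e_1)_\#\pi_{\PPP_1}\in\PP(X\times X)$ (with marginals $\mu_2,\mu_1$) and $\pi_{\PPP_0}\in\PP(X\times X)$ (with marginals $\mu_1,\mu_3$). These share the common marginal $\mu_1$; after gluing and permuting coordinates, I obtain $\pi\in\PP(X\times X\times X)$ with marginals $\mu_1,\mu_2,\mu_3$ satisfying
\begin{equation*}
(e_1,e_2)_\#\pi=\pi_{\PPP_1},\qquad (e_1,e_3)_\#\pi=\pi_{\PPP_0}.
\end{equation*}
Setting $\sigma:=(e_2,e_3)_\#\pi$, Lemma~\ref{lemma:conc} applied to $\PPP_2,\PPP_3$ (which share the Dirac marginal $\delta_{x_0}$) produces a concatenation $\PPP_4$ of $\PPP_2$ and $\PPP_3$ with $(e_0,e_\infty)_\#\PPP_4=\sigma$. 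Then I disintegrate
\begin{equation*}
\PPP_1=\pi_{\PPP_1}\otimes\PPP_1^{x,y},\qquad \PPP_4=\sigma\otimes\PPP_4^{y,z}
\end{equation*}
with respect to $(e_0,e_\infty)$, define $P:=\pi\otimes(\PPP_1^{x,y}\times\PPP_4^{y,z})\in\M(\Lip\times\Lip)$, and set $\PPP_5:=\conc_\#P$.

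Checking that $P$ fulfils Definition~\ref{defn:set-of-conc} will be routine: the $P$-a.s.\ compatibility $e_\infty(\gamma_1)=y=e_0(\gamma_2)$ is built into the supports of the disintegrated slices, and the marginal identities $(\rho_1)_\#P=\PPP_1$, $(\rho_2)_\#P=\PPP_4$ follow at once from the fact that the $(1,2)$- and $(2,3)$-marginals of $\pi$ are precisely $\pi_{\PPP_1}$ and $\sigma$. The coupling of $\PPP_5$ is then obtained by testing against $\varphi\circ(e_0,e_\infty)$: for $P$-a.e.\ $(\gamma_1,\gamma_2)$ one has $e_0(\conc(\gamma_1,\gamma_2))=x$ and $e_\infty(\conc(\gamma_1,\gamma_2))=z$, hence $(e_0,e_\infty)_\#\PPP_5=(e_1,e_3)_\#\pi=\pi_{\PPP_0}$, as required.

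The main conceptual obstacle is contained in the first step: one must recognize that the Dirac junction inside $(\PPP_2,\PPP_3)$ is exactly the degree of freedom that lets us prescribe an \emph{arbitrary} coupling $\sigma$ between $\mu_2$ and $\mu_3$ on $\PPP_4$, and that $\sigma$ must then be chosen as the $(2,3)$-marginal of a three-way coupling whose $(1,2)$- and $(1,3)$-marginals are fixed (to $\pi_{\PPP_1}$ and $\pi_{\PPP_0}$, respectively). All the remaining steps are standard disintegration bookkeeping, closely modelled on the proof of Lemma~\ref{lemma:all-you-want-on-conc}(4).
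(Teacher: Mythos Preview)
Your proposal does not address the stated lemma. Lemma~\ref{gluing} is the classical gluing lemma for probability measures; the paper does not prove it but merely cites \cite[Lemma 5.5]{santambrogiobook}. What you have written is instead a proof of Lemma~\ref{lemma:finiamola} (the statement immediately following the gluing lemma in the paper), which \emph{uses} Lemma~\ref{gluing} as a tool.

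If your intended target was Lemma~\ref{lemma:finiamola}, then your argument is correct and follows essentially the same strategy as the paper, with one economy. The paper applies the gluing lemma twice to build a four-marginal coupling $\pi\in\PP((\R^d)^4)$ that keeps the Dirac point $x_0$ explicitly as the third coordinate, and then observes a posteriori that the $(2,3)$- and $(3,4)$-marginals are forced because the third marginal is $\delta_{x_0}$. You short-circuit this by noting from the outset that the Dirac junction makes the coupling $\sigma$ between $\mu_2$ and $\mu_3$ freely prescribable via Lemma~\ref{lemma:conc}; hence a single application of the gluing lemma, producing only a three-marginal coupling with $(1,2)$-marginal $\pi_{\PPP_1}$ and $(1,3)$-marginal $\pi_{\PPP_0}$, suffices. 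The subsequent disintegration $\PPP_1=\pi_{\PPP_1}\otimes\PPP_1^{x,y}$, $\PPP_4=\sigma\otimes\PPP_4^{y,z}$, the definition $P:=\pi\otimes(\PPP_1^{x,y}\times\PPP_4^{y,z})$, and the verification of Definition~\ref{defn:set-of-conc} and of the final coupling are then identical to the paper's computations. Your version is slightly leaner; the paper's version makes the role of $\delta_{x_0}$ more explicit in the bookkeeping.
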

\begin{proof}[Proof of Lemma~\ref{lemma:finiamola}] For every $i=0,\dots,3$ we set $\pi_i = (e_0, e_\infty)_\# \PPP_i$.
Applying Lemma \ref{gluing}, we build $ \tilde \pi \in \PP(\R^d_x \times \R^d_y \times \R^d_z)$ such that $(e_1,e_2)_\#\tilde \pi = \pi_1$ and $(e_2,e_3)_\#\tilde \pi = \pi_2$. Next, we apply again Lemma \ref{gluing} between $\tilde \pi$ (commuting the spaces in the product in the following order: $(\R^d_y \times \R^d_z) \times \R^d_x$) and $\pi_0$ to obtain, after commuting the spaces back, $\pi \in \PP(\R^d_x \times \R^d_y \times \R^d_z \times \R^d_w)$ such that
$$(e_1,e_2)_\# \pi =\pi_1, \qquad (e_1,e_4)_\# \pi =\pi_0.$$
We notice moreover that  
$$(e_2,e_3)_\# \pi =\pi_2, \qquad(e_3,e_4)_\# \pi =\pi_3,  $$
because $(e_3)_\#\pi = \delta_{x_0}$  and there exists a unique coupling $\pi_2$ with marginals $\mu_2$ and $\delta_{x_0}$ and a unique coupling $\pi_3$ with marginals $\delta_{x_0}$ and $\mu_3$.

Denote by $\PPP_4$ any concatenation of $\PPP_2$ and $\PPP_3$ which realizes the coupling $\pi_4:=(e_2,e_4)_\#\pi$ (it can be obtained by Lemma~\ref{lemma:conc}). 
To construct $\PPP_5$, we disintegrate $\PPP_1$ and $\PPP_4$ with respect to $(e_0,e_\infty)$ to get 
$$\PPP_1 = \pi_1(x,y) \otimes \PPP^{x,y}_1 = \pi(x,y,z,w) \otimes \PPP^{x,y}_1$$
$$\PPP_4 = \pi_4(x,y)\otimes \PPP^{y,w}_4 = \pi(x,y,z,w) \otimes  \PPP^{y,w}_4.$$
We can now define 
$$\PPP_5 := {\rm conc}_\#\big( \pi(x,y,z,w) \otimes  (\PPP^{x,y}_1 \times \PPP^{y,w}_4) \big).$$
Since for every $x,y,z,w$ the measures $\PPP_1^{x,y}$ and $\PPP^{y,w}_4$ are supported on two sets of curves with a common extreme in $y$, their concatenation is well defined. Moreover, we need to verify property (2) of Definition \eqref{defn:set-of-conc}
$$(\rho_1)_\# \big(  \pi(x,y,z,w)\otimes  (\PPP^{x,y}_1 \times \PPP^{y,w}_4)\big) = 
\pi(x,y,z,w) \otimes (\pi_1)_\#(\PPP^{x,y}_1 \times \PPP^{y,w}_4) 
=
 \pi(x,y,z,w) \otimes \PPP^{x,y}_1 = \PPP_1
$$
and similarly 
$(\rho_2)_\# \big(  \pi(x,y,z,w) \otimes (\PPP^{x,y}_2 \times \PPP^{y,w}_4)\big) = \PPP_2
$,
so that $\PPP_5$ is a well defined concatenation between $\PPP_1$ and $\PPP_4$. 
Finally we check that $\PPP_0$ and $\PPP_5$ have the same coupling
\begin{equation*}
\begin{split}
(e_0,e_\infty)_\# \PPP_5 &= {\big((e_0,e_\infty) \circ{\rm conc}\big)}_\#\big(  \pi(x,y,z,w) \otimes  (\PPP^{x,y}_1 \times \PPP^{y,w}_4) \big)
\\&= \pi(x,y,z,w) \otimes  \big( {\big((e_0,e_\infty) \circ{\rm conc}\big)}_\#(\PPP^{x,y}_1 \times \PPP^{y,w}_4)  \big)
\\& = \pi(x,y,z,w) \otimes \delta_{x,w} = (e_1,e_4)_\# \pi = \pi_0= (e_0,e_\infty)_\# \PPP_0.\qedhere
\end{split}
\end{equation*}
\end{proof}

%\begin{lemma}[Concatenation with given coupling and through a finite number of Dirac deltas]\label{lemma:conc-multiple}
%Let $x_1,..., x_N$ be distinct points in $X$.  For every $i=1,..., N$, let $\pi_i \in \M(X\times X)$ be a transport plan between $\mu^{-,i}$ and $\mu^{+,i}$, let $\PPP^i_1 \in TP(\mu^{-,i}, \delta_{x_i})$ and $\PPP^i_2 \in TP(\delta_{x_i},\mu^{+ ,i})$.
%
%Then there exists $\PPP \in TP\big(\sum_{i=1}^N \pi_i\big)$ a concatenation between $\sum_{i=1}^N \mu^{-,i}$ and $\sum_{i=1}^N \mu^{+,i}$.
%\end{lemma}
%We remark that the traffic plan $\PPP$ of Lemma~\ref{lemma:all-you-want-on-conc}(4) is in particular a traffic plan with coupling $\sum_{i=1}^N \pi_i$ and such that
%$$\E(\PPP) \leq \E\Big(\sum_{i=1}^N\PPP^i_1\Big) +\E\Big(\sum_{i=1}^N\PPP^i_2\Big).
%$$
%
%(FIGURA)
%\begin{proof}
%Define $\PPP_i \in TP(\pi_i)$ be a concatenation of $\PPP^i_1$ and $\PPP^i_2$ given by Lemma~\ref{lemma:conc}.
%By Lemma~\ref{lemma:all-you-want-on-conc}(5) the sum of two concatenations is still a concatenation and we deduce that 
%$$\PPP:= \sum_{i=1}^N \PPP_i \in TP\Big(\sum_{i=1}^N \pi_i\Big)$$
%is a concatenation between $\sum_{i=1}^N \mu^{-,i}$ and $\sum_{i=1}^N \mu^{+,i}$.
%\end{proof}
%

\section{Stability for the mailing problem}
%Per la regolarita' dovrebbe bastarci questa versione (che implica anche banalmente la stabilita')

In this section we prove a slightly more refined version of the stability Theorem~\ref{thm:main}, which is more suited to the application presented in Section~\ref{s:stab}.
We deduce Theorem~\ref{thm:main} as a corollary.
\begin{theorem}\label{thm:good-version}
	Let $\alpha>1-\frac{1}{d}$. Let $\pi_\infty \in \PP(X\times X)$ with marginals $\mu^-$ and $\mu^+$ and $\{ \pi_n\}_{n\in\N}\subseteq \PP(X\times X)$ be a sequence with marginals $\mu^-_n$, $\mu^+_n$ such that
	\begin{equation}\label{hp:supp-n-convergence1}
\pi _n \weak \pi_\infty \qquad \mbox{weakly$^*$ in the sense of measures}.
	\end{equation}
	Then for every $\e>0$ there exists $n_0\in \N$ such that for every sequence $\{\PPP_i \in \TP(\pi_i)\}_{i\in\N\cup\{\infty\}}$ and for every $n,m\geq n_0$ (with $m= \infty$ possibly), there exist two traffic plans $\PPP^1_{n,m} \in \TP(\mu^-_n, \mu^-_m)$, $\PPP^2_{n,m}\in \TP(\mu^+_m, \mu^+_n)$ such that
	$$ \E (\PPP^1_{n,m}) \leq \e, \qquad \E (\PPP^2_{n,m}) \leq \e$$
and there exists a traffic plan obtained as a concatenation between $\PPP_m$ and $\PPP^2_{n,m}$, and a further concatenation $\tilde\PPP_{n,m}$ between $\PPP^1_{n,m}$ and the latter concatenation such that $\tilde\PPP_{n,m}\in\TP(\pi_n)$.

\end{theorem}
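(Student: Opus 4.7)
The plan is to build $\PPP^1_{n,m}$ and $\PPP^2_{n,m}$ via a quantization argument based on a finite partition of $X$, exploiting the supercritical condition $\alpha > 1-1/d$ to make intra-cell transport cheap, and then to glue everything with $\PPP_m$ using the freedom to prescribe couplings through Dirac intermediate marginals, as in Lemma \ref{lemma:finiamola}.

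Fix $\e > 0$ and small parameters $\ell, \delta > 0$ to be determined. Take a finite Borel partition $\{Q_i\}_{i=1}^N$ of $X$ with $\mathrm{diam}(Q_i) \leq \ell$ and $\partial Q_i$ null for the marginals of $\pi_\infty$; pick $x_i \in Q_i$. Since the partition is finite and $\pi_n \weak \pi_\infty$, we have $\pi_n(Q_i \times Q_j) \to \pi_\infty(Q_i \times Q_j)$ for every pair $(i,j)$, so for $n_0$ large and $n, m \geq n_0$,
\[
\sum_{i,j} |\pi_n(Q_i \times Q_j) - \pi_m(Q_i \times Q_j)| < \delta.
\]
To construct $\PPP^1_{n,m}$, inside each $Q_i$ split $\mu^-_n|_{Q_i}$ and $\mu^-_m|_{Q_i}$ into a common piece of mass $c_i := \mu^-_n(Q_i) \wedge \mu^-_m(Q_i)$ and an excess. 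Transport the common pieces one into the other by first collapsing to $\delta_{x_i}$ and then re-expanding, applying Corollary \ref{cor:small-balls-en-con-assegn} inside a set of diameter $\ell$; the cost per cell is at most $C_{\alpha,d}\, \ell \, c_i^\alpha$. By Hölder and $\sum_i c_i \leq 1$,
\[
\sum_{i=1}^N C_{\alpha,d}\, \ell\, c_i^\alpha \leq C_{\alpha,d}\, \ell\, N^{1-\alpha} \leq C\, \ell^{1 - d(1-\alpha)},
\]
which vanishes as $\ell \to 0$ precisely because $\alpha > 1-1/d$. The residual excess pieces, of total mass at most $\delta$, are transported into one another by an auxiliary plan of energy at most $C_{\alpha,d}\,\mathrm{diam}(X)\, \delta^\alpha$ (again Corollary \ref{cor:small-balls-en-con-assegn}). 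Choosing first $\ell$ small, then $\delta$ small, yields $\E(\PPP^1_{n,m}) \leq \e$; the construction of $\PPP^2_{n,m}$ between $\mu^+_m$ and $\mu^+_n$, with analogous collapse points $y_j \in Q_j$, is identical.

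It remains to tune the couplings of $\PPP^1_{n,m}$ and $\PPP^2_{n,m}$ so that the three-fold concatenation $\tilde\PPP_{n,m} := \PPP^1_{n,m} \cdot \PPP_m \cdot \PPP^2_{n,m}$ lies in $\TP(\pi_n)$. The key observation is that \emph{inside} $\PPP^1_{n,m}$ each cell $Q_i$ contributes a Dirac intermediate marginal at $x_i$, and inside $\PPP^2_{n,m}$ each cell $Q_j$ contributes one at $y_j$; by construction, the common mass routed through the pair $(x_i, y_j)$ equals $\pi_n(Q_i \times Q_j) \wedge \pi_m(Q_i \times Q_j)$. Cell-pair by cell-pair we then apply Lemma \ref{lemma:finiamola} in the generalization announced right after its statement (any coupling can be prescribed as soon as some intermediate marginal is a Dirac): this lets us fix the $(e_0, e_\infty)$-coupling of the common-mass contribution on $Q_i \times Q_j$ to be the restriction of $\pi_n$ to that cell-pair (renormalized to the common mass). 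Summing using stability of concatenations under addition (Lemma \ref{lemma:all-you-want-on-conc}(5)), and handling the small excess pieces (of total mass $< \delta$) by a further concatenation whose coupling is freely chosen through its own Dirac collapse, produces $\tilde\PPP_{n,m} \in \TP(\pi_n)$.

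The main technical obstacle is precisely this coupling-matching step: the intermediate marginals $\mu^\pm_m$ of the global chain are not Diracs, so Lemma \ref{lemma:finiamola} does not apply to the full three-fold concatenation directly. The cell-wise construction circumvents this by inserting Diracs at $x_i$ and $y_j$ \emph{within} $\PPP^1_{n,m}$ and $\PPP^2_{n,m}$, giving just enough freedom to realize $\pi_n$ cell-pair by cell-pair; energy estimates then aggregate via the sub-additivity in Lemma \ref{lemma:all-you-want-on-conc}(3).
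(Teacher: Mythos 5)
Your proof is correct and follows the same overall architecture as the paper's: decompose $\PPP_n$ and $\PPP_m$ into cell-pair blocks according to the cells containing the initial and final points, connect the initial (resp.\ final) marginals cell by cell through a Dirac collapse of cost $O(\ell\, c_i^\alpha)$ via Corollary~\ref{cor:small-balls-en-con-assegn}, realize the coupling $\pi_n$ cell-pair by cell-pair with Lemma~\ref{lemma:finiamola} (the Dirac intermediate marginal providing exactly the needed freedom), sum everything with Lemma~\ref{lemma:all-you-want-on-conc}(5), and dispose of a residual of mass $O(\delta)$ by a global collapse. The one genuinely different ingredient is how you obtain the smallness of the connection energy: the paper proves a measure-theoretic covering lemma (Lemma~\ref{lemma:meas-cov}), selecting via Besicovitch density points and the Vitali covering theorem balls satisfying $r\,\mu(B(x,r))^\alpha\leq\e\,\mu(B(x,r))$, whereas you take a uniform partition of mesh $\ell$ and use the elementary H\"older bound $\sum_{i=1}^N c_i^\alpha\leq N^{1-\alpha}\leq C\ell^{-d(1-\alpha)}$, valid for an \emph{arbitrary} probability measure, so that the total connection cost is $C\ell^{1-d(1-\alpha)}\to 0$ precisely when $\alpha>1-\tfrac1d$. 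This is simpler than Lemma~\ref{lemma:meas-cov} and shows the adaptive covering is not needed for this theorem (the paper's remark after that lemma records the analogous simplification only for $L^\infty$ densities). Two bookkeeping points to tighten: (i) your first paragraph splits the mass per cell on each marginal separately (via $c_i=\mu^-_n(Q_i)\wedge\mu^-_m(Q_i)$), while the coupling-matching step requires the finer per-cell-pair splitting with common mass $\pi_n(Q_i\times Q_j)\wedge\pi_m(Q_i\times Q_j)$ (the paper's rescalings $\alpha^{i,j}_{m,n}$); the energy estimate survives, since the mass routed through $x_i$ only decreases, but the construction should be phrased in the cell-pair form from the start so that the pieces being concatenated actually match. (ii) You do not need the unproved ``two-Dirac'' generalization of Lemma~\ref{lemma:finiamola}: a Dirac at $y_j$ in the final connection alone already lets the single-Dirac version proved in the paper prescribe the coupling of each cell-pair block, which is exactly how the paper's Step~6 proceeds.
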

\begin{proof}[Proof of Theorem~\ref{thm:main}] We apply Theorem~\ref{thm:good-version} with $m= \infty$, being $\PPP_\infty\in\OTP(\pi_\infty)$, to find a competitor $\tilde \PPP_{n,\infty} \in \TP(\pi_n)$ given by a concatenation of $\PPP_\infty$ and  $\PPP^2_{n,\infty}$ and a further concatenation of $\PPP^1_{n,\infty}$ and the latter concatenation; hence with the energy estimate
$$\E(\tilde \PPP_{n,\infty}) \leq \E(\PPP^1_{n,\infty})+ \E(\PPP_\infty) + \E(\PPP_{n,\infty}^2).$$
Moreover, for every $\e>0$, for $n$ large enough, the ``connections'' have small energy $\E(\PPP^1_{n,\infty}) , \E(\PPP^2_{n,\infty}) \leq \e$.
%We may assume that $\tilde \PPP_n$ is supported on curves that are parametrized by arc-length, so that the previous bound on the energy implies that $\tilde \PPP_n \in \TP_C(X)$ for some $C>0$ uniform in $n$. 
Letting $n \to \infty$, by the lower semicontinuity of the energy (see Lemma \ref{TPC}) and since $\PPP_n\in\OTP(\pi_n)$, we have
$$\E(\PPP) \leq \liminf_{n\to \infty}\E( \PPP_n) \leq \liminf_{n\to \infty}\E(\tilde \PPP_{n,\infty}) \leq \E(\PPP_\infty) + 2\e.
$$
Since the previous formula holds for every $\e>0$, we deduce that $\PPP\in\OTP(\pi)$.

To conclude the proof of Theorem~\ref{thm:main}, we need to show that $\E( \PPP_n)\to \E( \PPP)$. To this aim, we apply Theorem~\ref{thm:good-version} again with $m= \infty$, but $\PPP_\infty$ will be now chosen equal to $\PPP$ (that we already know is an optimal traffic plan). We deduce that for every $\e>0$ and $n$ large enough there exists a competitor $\tilde \PPP_{n,\infty} \in \TP(\pi_n)$ given by a concatenation of $\PPP$ and $\PPP^2_{n,\infty}$ and a further concatenation between $\PPP^1_{n,\infty}$ and the latter concatenation; hence with the energy estimate
$$\E(\tilde \PPP_{n,\infty}) \leq \E(\PPP^1_{n,\infty})+ \E(\PPP) + \E(\PPP_{n,\infty}^2).$$
Moreover, the connections have small energy $\E(\PPP^1_{n,\infty}) , \E(\PPP^2_{n,\infty}) \leq \e$.
%We may assume that $\tilde \PPP_n$ is supported on curves that are parametrized by arc-length, so that the previous bound on the energy implies that $\tilde \PPP_n \in \TP_C(X)$ for some $C>0$ uniform in $n$. 
Letting $n \to \infty$, as above we have
$$\E(\PPP) \leq \liminf_{n\to \infty}\E( \PPP_n) \leq\limsup_{n\to \infty}\E( \PPP_n) \leq \limsup_{n\to \infty}\E(\tilde \PPP_{n,\infty}) \leq \E(\PPP) + 2\e.
$$
Since the previous inequality holds for every $\e>0$, we deduce that $\E( \PPP_n)\to \E( \PPP)$. 
\end{proof}

The proof of theorem \ref{thm:good-version} relies on the following lemma, which employs in an essential way the condition $\alpha>1-\frac{1}{d}$. It builds a suitable covering of the support of a given measure, with balls satisfying the smallness condition in \eqref{eqn:cov3} below.

\begin{lemma}\label{lemma:meas-cov}
Let $\alpha>1-\frac{1}{d}$, $\e>0$ and let $\mu \in \PP(X)$. Then there exists a countable family of balls $\{B_i = B(x_i,r_i)\}_{i\in \N}$ such that
\begin{equation}
\label{eqn:cov1}
\mu\Big(\R^d \setminus \bigcup_{i=1}^\infty B_i \Big) = 0,
\end{equation}
\begin{equation}
\label{eqn:cov2}
\mu(\partial B_i) = 0 \qquad \mbox{for every }i\in \N,
\end{equation}
%and
\begin{equation}
\label{eqn:cov3}
\sum_{i=1}^\infty r_i \mu(B_i)^\alpha <\e.
\end{equation}
%s $c:= c(n,k)$ and 
\end{lemma}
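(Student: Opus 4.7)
The condition $\alpha > 1 - \tfrac{1}{d}$ is equivalent to $\beta := 1 - d(1-\alpha) > 0$, and the whole argument exploits precisely this positivity in an elementary scaling estimate. My plan is: cover $X$ by the cubes of side $r$ of the standard grid on $\R^d$ at a single small scale $r$, inflate each cube to the ball circumscribing it, bound the sum in \eqref{eqn:cov3} by H\"older's inequality, then choose $r$ small. A final perturbation of the radii takes care of \eqref{eqn:cov2}.

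Concretely, for $r>0$ let $\{Q_i\}_{i=1}^N$ be the half-open cubes of side $r$ of the standard grid on $\R^d$ that meet $X$, so $N \leq C_X r^{-d}$ and $\sum_i \mu(Q_i) = \mu(X) = 1$. Let $x_i$ be the center of $Q_i$ and set $\rho := \tfrac{\sqrt{d}}{2}r$, so that $Q_i \subset \tilde B_i := B(x_i, \rho)$ and $\{\tilde B_i\}$ covers $X$. Since each $\tilde B_i$ intersects at most a dimensional constant $K_d$ of the cubes of the grid, $\sum_i \mu(\tilde B_i) \leq K_d$. H\"older's inequality with exponents $1/\alpha$ and $1/(1-\alpha)$ then yields
\begin{equation*}
\sum_{i=1}^N \rho\, \mu(\tilde B_i)^\alpha \,\leq\, \rho\, N^{1-\alpha} \Big(\sum_i \mu(\tilde B_i)\Big)^\alpha \,\leq\, c_d\, r \cdot (C_X r^{-d})^{1-\alpha} K_d^\alpha \,=\, C_{X,d}\, r^\beta,
\end{equation*}
and the right-hand side vanishes as $r\to 0$ because $\beta>0$; fix $r$ small enough so that it is less than $\e/2$.

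For \eqref{eqn:cov2}, for each $i$ the map $r' \mapsto \mu(\partial B(x_i, r'))$ is positive on an at most countable subset of $[\rho, 2\rho]$, because the spheres $\partial B(x_i, r')$ are disjoint for distinct $r'$ and $\mu$ is finite; pick $r_i \in [\rho, 2\rho]$ with $\mu(\partial B(x_i, r_i)) = 0$ and set $B_i := B(x_i, r_i)$. The enlarged balls $B_i \supseteq \tilde B_i$ still cover $X$, so $\mu(\R^d \setminus \bigcup_i B_i) = 0$ (this gives \eqref{eqn:cov1} since $\mu$ is supported in $X$), and they still have overlap bounded by a dimensional constant. The same H\"older estimate as above therefore gives $\sum_i r_i \mu(B_i)^\alpha \leq 2 C'_{X,d}\, r^\beta < \e$, yielding \eqref{eqn:cov3}. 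The countable family required in the statement is obtained by setting $B_i := \emptyset$ for $i > N$.

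I do not foresee any substantial obstacle: no Vitali- or Besicovitch-type covering theorem is required, since a single scale $r$ suffices, and the perturbation of the radii is routine thanks to the finiteness of $\mu$. The only genuine ingredient is the strict positivity of the critical exponent $\beta$, which is exactly the supercritical hypothesis $\alpha > 1 - 1/d$.
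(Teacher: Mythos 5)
Your proof is correct, and it takes a genuinely different and more elementary route than the paper. The paper works pointwise: it invokes the Besicovitch derivation theorem to restrict to the set where $\frac{d\mu}{d\Leb^d}$ exists and is positive, shows that at each such point $r\,\mu(B(x,r))^\alpha\leq\e\,\mu(B(x,r))$ for all small $r$ (rewriting this as $\e^{-d-\delta}r^\delta\leq \mu(B(x,r))/r^d$ with $\delta=\frac{1}{1-\alpha}-d>0$), and then extracts a \emph{disjoint} covering via the Vitali--Besicovitch covering theorem, so that summing the pointwise inequality gives \eqref{eqn:cov3} at once. You instead work at a single scale: a grid covering with $N\lesssim_X r^{-d}$ balls of comparable radius and bounded overlap, combined with the concavity bound $\sum_i\mu(B_i)^\alpha\leq N^{1-\alpha}\bigl(\sum_i\mu(B_i)\bigr)^\alpha$, which yields the global estimate $C_{X,d}\,r^{\beta}$ with $\beta=1-d(1-\alpha)>0$. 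Both arguments use the supercritical hypothesis in the same scaling form, but yours avoids all covering/differentiation theorems, produces a finite family covering all of $X$ (not merely $\mu$-a.e.\ point), and in effect upgrades the paper's own remark (which handles only $\mu=f\Leb^d$ with $f\in L^\infty$) to arbitrary probability measures via H\"older. Your treatment of \eqref{eqn:cov2} by perturbing each radius within $[\rho,2\rho]$ matches the paper's observation that $\mu(\partial B(x,r'))>0$ for at most countably many $r'$. The only cosmetic point is that the smallness of $r$ should be fixed against the final constant (the one valid for the enlarged balls of radius up to $2\rho$) rather than in two stages, and the padding of the finite family to a countable one is best done with $r_i=0$ or by simply noting that a finite family is countable; neither affects the argument.
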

\begin{remark}When $\mu = f \Leb^d \trace X$ for some $f\in L^1 \cap L^\infty(X)$, the previous lemma is trivial. Indeed, for any $r_0>0$, we can find a countable covering of $X$ made by balls $\{B(x_i,r_i)\}_{i \in \N}$ with $r_i\leq r_0$ and
$\sum_{i=1}^\infty r_i^{d} \leq \Leb^d(X)+1$. We estimate
\begin{equation*}
\begin{split}
\sum_{i=1}^\infty r_i \mu(B_i)^\alpha  &\leq 
\sum_{i=1}^\infty r_i^{1+\alpha d} \|f \|_{L^\infty}^\alpha
\leq 
r_0^{1+\alpha d - d}\sum_{i=1}^\infty r_i^{d} \|f \|_{L^\infty(\R^d)}^\alpha
\leq 
r_0^{1+\alpha d - d} ( \Leb^d(X)+1) \|f \|_{L^\infty}^\alpha.
\end{split}
\end{equation*}
By a suitable choice of $r_0$ and since $1+\alpha d - d>0$, this quantity can be made arbitrarily small.
%can be seen as a corollary of the Calderon-Zygmund decomposition of $f$. ...
%
%Indeed, assume that $\mu  = f \Leb^d$ with $f \in L^1(X)$ and take a Calderon-Zygmund decomposition of $f$ for given $\e>0$. In other words, we consider a (finite or countable) family of cubes $\{Q_i\}_{i\in \N}$ such that 
%$$
%\frac{1}{\Leb^d(Q_i)} \int_{Q_i} |f| \, dx 
%$$ 
%and
%$$f \leq \e \qquad \mbox{on } X \setminus \bigcup_{i\in \N} Q_i.$$
% 
\end{remark}
\begin{proof}[Proof of Lemma~\ref{lemma:meas-cov}]
Let us consider 
$$\delta:= \frac 1 {1-\alpha}-d$$
and notice that $\delta >0$ since $\alpha>1-\frac{1}{d}$.
Let $D \subseteq X$ be the set of points such that the Radon-Nikodym density of $\mu$ with respect to $\Leb^d$ exists and is strictly positive (and possibly infinite)
$$D:=\Big \{ x\in X: \mbox{ there exists } \frac{d\mu}{d \Leb^d}(x):=\lim_{r \to \infty} \frac{\mu(B_r(x))}{\omega_d r^d} \in (0,\infty]
\Big\}.$$
By the Besicovich derivation theorem (see \cite[Theorem 2.22]{AFP}) this set supports the measure $\mu$; we aim to find a covering of $\mu$-a.e. point of $D$ made by balls that satisfy \eqref{eqn:cov2} and \eqref{eqn:cov3}.

%Let $c_d$ be the constant (depending only on the dimension $d$) of the .
For any point $x\in D$, we claim that for every $r$ small enough (depending on $x$)
\begin{equation}
\label{eqn:exit-time}
\mu(B(x,r))^\alpha r \leq {\e} \mu(B(x,r)).
\end{equation}
Indeed, this condition \eqref{eqn:exit-time} can be rewritten as
\begin{equation}
\label{eqn:exit-time-equiv}
\Big(\frac{1}{\e}\Big)^{d+\delta} r^\delta \leq \frac{\mu(B(x,r))}{r^d}.
\end{equation}
Since the point $x$ has positive Radon-Nikodym density $\frac{d\mu}{d \Leb^d}(x)$, the right-hand side in \eqref{eqn:exit-time-equiv} is greater than $\frac{\omega_d}{2}\frac{d\mu}{d \Leb^d}(x)>0$ for $r$ small enough. On the other hand, the left-hand side of \eqref{eqn:exit-time-equiv} converges to $0$ as $r \to 0$. Hence the claim \eqref{eqn:exit-time} holds.

For every $x\in D$, we also observe that for every $r$ apart at most countably many
\begin{equation}
\label{eqn:no-meas-bound}
\mu(\partial B(x,r)) = 0.
\end{equation}

The set $D$, because of the previous observation, has a fine covering made by all balls $B(x,r)$ centered at any point in $D$ and satisfying  \eqref{eqn:exit-time} and \eqref{eqn:no-meas-bound}.
%of balls $\{B(x,r(x))\}_{x\in D}$ is a Besicovich covering of $D$. 
Hence, by the Vitali-Besicovich covering lemma (see 
\cite[Theorem 2.19]{AFP}), we can find a countable family of pairwise disjoint balls $\{B_i = B(x_i,r_i)\}_{i\in \N}$ satisfying the properties \eqref{eqn:exit-time} and \eqref{eqn:no-meas-bound} that cover $D$. 
%In particular, we call $\{B_i = B(x_i,r_i)\}_{i\in \N}$ the union of these families and we notice that each point $x\in \R^d$ belongs
%$$1_D \leq \sum_{i=1}^\infty 1_{B_i} \leq c_d.$$
Hence, we estimate the left-hand side of \eqref{eqn:cov3} thanks to \eqref{eqn:exit-time} 
\begin{equation*}
\begin{split}
\sum_{i=1}^\infty r_i \mu(B_i)^\alpha 
&\leq \e \sum_{i=1}^\infty \mu(B_i) = \e.\qedhere
\end{split}
\end{equation*}
\end{proof}

%\begin{lemma}[Vitali-Besicovich covering lemma]\label{lemma:besi}
%\end{lemma}

\begin{proof}[Proof of Theorem~\ref{thm:good-version}]We develop the proof in several steps, briefly described here.

In the following, we consider two countable families of balls $B_i^\pm$ as in Lemma~\ref{lemma:meas-cov}.
We select two finite subfamilies, indexed by $1,..., N^\pm$, which cover a big portion of the measures $\mu^\pm$ (Step 1 and 2).

In Step 3, we decompose the traffic plans $\PPP_n$, $\PPP_m$ in a sum of smaller traffic plans  $\PPP_n^{i,j}$ and  $\PPP_m^{i,j}$, according to the fact that each curve begins in a certain $B_i^- \setminus \big(\cup_{k=1}^{i-1}B_k^- \big)$, for some $i=1,...,N^-$ and ends in $B_j^+ \setminus \big(\cup_{k=1}^{j-1}B_k^+ \big)$ for some $j=1,...,N^+$.

We define the residuum as the part of the traffic plans $\PPP_n$ and $\PPP_m$ that either begins outside $\cup_{k=1}^{N^-}B_k^-$ or ends outside $\cup_{k=1}^{N^+}B_k^+$; the construction which takes care of the residuum is carried out in Step 7 and is based on the smallness of the total mass carried by it.

In Step 4, we connect the initial points of each $\PPP_m^{i,j}$ with the initial points of $\PPP_n^{i,j}$ by means of a traffic plan; thanks to the choice of the balls given by Lemma~\ref{lemma:meas-cov}, the sum of these traffic plans as $i$ and $j$ vary (denoted in the following by $\PPP^I$), can be built with arbitrarily small cost.

A similar construction works for the final points (Step 5) and we will denote the corresponding plan as $\PPP^{IV}$; in addition, doing this construction more carefully, one can ensure that the traffic plan connecting the final points of each $\PPP_n^{ij}$ with the final points of $\PPP_m^{ij}$ passes through the center of the ball ${B_j^+}$.

This last observation and Lemma~\ref{lemma:finiamola} ensure that we can find a concatenation of the traffic plans $\PPP^I$, $\PPP_m- \PPP_m^{res}$ and $\PPP^{IV}$ which has the same marginals of $\PPP_n-\PPP_n^{res}$, as shown in Step 6.

%Let $\PPP_{opt} \in \OTP(\pi)$. We aim to prove that, given any $\Delta>0$, for every $n \geq n_0:= n_0(\e)$ there exists a $\PPP_n^{comp} \in TP(\pi_n)$ such that
%$$\E(\PPP^{comp}_n) \leq \E(\PPP_{opt})+ \e.$$
%
%Once this is proved, by the lower semicontinuity of the energy we find 
%$$\E(\PPP) \leq \liminf_{n\to \infty} \E(\PPP^{comp}_n) \leq \E(\PPP_{opt}),$$
%which proves the optimality of $\PPP$.
%	Let us assume by contradiction that $\PPP$ is not optimal; then there exists $\PPP_{opt} \in \OTP(\pi)$ and such that
%	$$\Delta:= \mathbb{E}^\alpha(\PPP) - \mathbb{E}^\alpha(\PPP_{opt}) >0.$$

{\it Step 1: construction of a covering of $\supp(\mu^\pm)$}. Let $\bar\e>0$ to be chosen later (in terms only of $d,\alpha, {\rm diam}(X),\e$).
Applying Lemma~\ref{lemma:meas-cov}, we get that there exists a finite number of balls $\{B_i^\pm = B(x^\pm_i,r^\pm_i)\}_{i\in \N}$ covering $\supp (\mu^\pm) $ and satisfying the following properties:
\begin{equation}\label{eqn:cov-limit-1}
\mu^\pm\Big(\R^d \setminus \bigcup_{i=1}^\infty B_i^\pm \Big) = 0,
\end{equation}
\begin{equation}\label{eqn:cov-limit-2}
\sum_{i=1}^\infty r_i^\pm \mu(B^\pm_i)^\alpha <\bar\e^\alpha,
\end{equation}
\begin{equation}\label{eqn:cov-limit-3}
%\mu^\pm_n(\partial B_i) =0, \qquad 
\mu^\pm(\partial B_i^\pm) =0 \qquad \mbox{for every }i\in \N.
\end{equation}

\smallskip
{\it Step 2: choice of $N^\pm$ and of $n_0$}. We choose $N^\pm$ satisfying
$$\mu^\pm \Big( \bigcup_{i=1}^{N^\pm} {B_i^\pm}\Big) > 1-\bar\e.
$$
%Let $\e_3>0$ to be chosen later.
 For every $i\in \{ 1,..., N^\pm\}$ we define 
$$ C_{i}^\pm:=B_i^\pm \setminus \Big(\cup_{k=1}^{i-1}B_k^\pm \Big).$$ 

Without loss of generality, the coverings can be assumed not redundant, that is
\begin{equation}\label{nonzero}
\mu^-(C_i^-)>0, \qquad \mu^+(C_i^+)>0 \qquad \mbox{for every $i\in \N$}.
\end{equation}
We can fix $n_0$ large enough so that the following properties hold: 
\begin{equation}\label{eqn:choice-n-3}
\mu_n^\pm(C_i^\pm )\leq 2 \mu^\pm(C_i^\pm ) \qquad \mbox{$\forall n \geq n_0$ and $i \in \{1,..., N^\pm\}$,} 
\end{equation}
\begin{equation}\label{eqn:choice-n-2}
\mu^\pm_n \Big( \Big( \bigcup_{i=1}^{N^\pm} {B_i^\pm}\Big)^c\Big) \leq 2\bar\e,
\end{equation}
\begin{equation}\label{eqn:choice-n-1}
\pi_n(C_i^-\times C_j^+) - \pi_m(C_i^- \times C_j^+) \leq \frac{\bar\e}{N^- N^+} \quad \mbox{ $ \forall m,n \geq n_0$, $ (i,j) \in \{1,..., N^-\}\times \{1,..., N^+\}$}.
\end{equation}
%\begin{equation}\label{eqn:choice-n-4}
%\E(\PPP_n) \geq \E(\PPP)- \frac{\Delta}{4}
%\end{equation}
Indeed, \eqref{eqn:choice-n-1} 
follows from assumption \eqref{hp:supp-n-convergence1} and \eqref{eqn:cov-limit-3}. Moreover, since \eqref{hp:supp-n-convergence1} implies that $\mu_n^\pm$ weakly converges to $\mu^\pm$, then \eqref{eqn:choice-n-3} follows from \eqref{nonzero} and \eqref{eqn:choice-n-2} follows from \eqref{eqn:cov-limit-1}.

%\begin{equation}\label{eqn:choice-n-1}
%
%\end{equation}

\smallskip
{\it Step 3: decomposition of the traffic plans according to initial and final points of each curve.}
For every $n\in\N$ we define $\PPP^{i,j}_n$ to be the part of traffic plan $\PPP_n$ made by curves that start in $C_i^-$ and end in $C_j^+$, namely
$$\PPP^{i,j}_n := \PPP_n  \trace \{ \gamma: \gamma(0) \in C_i^-, \, \gamma(\infty)  \in C_j^+ \},$$
%and we consider its marginals FORSE DEFINIRLE SOLO CON TILDE
observing that the coupling induced by $\PPP^{i,j}_n$ is
$$(e_0, e_\infty)_\# \PPP^{i,j}_n = \pi_n \trace (C_i^-\times C_j^+).$$
%as the first marginal of $\PPP^{i,j}_n$.
%X FARE UNO SKETCH, ASSUMERE CHE IN 2.13 FA 0. NON DELINEARE IL MODO IN CUI SI FA LA RI-ASSEGNAZIONE, MA DIRE SOLO CHE USA UN LEMMA E QUELLO SULLA MISURA.
%We denote
For any  $i \in \{1,..., N^-\}$ and $j\in \{1,..., N^+\}$, we consider 
$$\mu^{i,j}_{n,\pm} := \mu^\pm_{\PPP^{i,j}_n},$$ 
$$
\alpha_{m,n}^{i,j}:= \min\Big\{1, \frac{\PPP_m^{i,j}(\Lip)}{\PPP_n^{i,j}(\Lip)} \Big\},
\qquad
\alpha_{n,m}^{i,j}:= \min\Big\{1, \frac{\PPP_n^{i,j}(\Lip)}{\PPP_m^{i,j}(\Lip)} \Big\}
$$
(with the agreement that the fractions are infinite if the denominator is $0$).
Let us consider rescalings of $ \PPP^{i,j}_n $ and $ \PPP^{i,j}_m $, namely
$$\tilde \PPP^{i,j}_n := \alpha_{m,n}^{i,j}\PPP^{i,j}_n, \qquad \tilde \PPP^{i,j}_m := \alpha_{n,m}^{i,j}\PPP^{i,j}_m
$$
and we consider their marginals
$$\tilde \mu^{\pm,i,j}_{n} := \mu^\pm_{\tilde \PPP^{i,j}_n}\quad\mbox{and}\quad \tilde \mu^{\pm,i,j}_{m} := \mu^\pm_{\tilde \PPP^{i,j}_m}.$$
With this definition, for every  $(i,j) \in \{1,..., N^-\}\times \{1,..., N^+\}$ we have $ \tilde \PPP^{i,j}_n \leq  \PPP^{i,j}_n$, $ \tilde \PPP^{i,j}_m \leq  \PPP^{i,j}_m$, 
$$\tilde \PPP^{i,j}_n (\Lip) =  \tilde \PPP^{i,j}_m (\Lip), \qquad \tilde \mu_n^{-,i,j}(\R^d) = \tilde \mu^{-,i,j}_m(\R^d)=\tilde\mu_n^{+,i,j}(\R^d) = \tilde \mu^{+,i,j}_m(\R^d).$$
Moreover, it holds that $\tilde \PPP_n \leq \PPP_n$, defining 
$$\tilde \PPP_n:=\sum_{i=1}^{N^-}\sum_{j=1}^{N^+}\tilde \PPP^{i,j}_n.$$
%$$\tilde \PPP := \sum_{i=1,j} \tilde \PPP^{i,j}$$
Next we introduce the residuum, namely $\PPP_n -  \tilde \PPP_n$, and we split it into two pieces as follows
$$\PPP_n  = \tilde \PPP_n+ \PPP^{res}_n = \tilde \PPP_n+ \PPP^{1,res}_n+ \PPP^{2,res}_n,$$
where
$$\PPP^{1,res}_n= \sum_{i=1}^{N^-}\sum_{j=1}^{N^+} (1-\alpha_{m,n}^{i,j}) \PPP_n^{i,j}, \qquad \PPP^{2,res}_n= \PPP_n-\sum_{i=1}^{N^-}\sum_{j=1}^{N^+} \PPP_n^{i,j}.$$
We show that $\PPP^{res}_n$ has small mass, namely
\begin{equation}
\label{eqn:piccolo-res}
\PPP^{res}_n(\Lip) \leq 5\bar\e.
\end{equation}
Indeed, since $\PPP_n^{i,j}(\Lip)=\PPP_n  ( \{ \gamma: \gamma(0) \in C_i^-, \, e_\infty(\gamma)  \in C_j^+ \} )= \pi_n(C_i^-\times C_j^+)$ and similarly for $\PPP_m^{i,j}(\Lip)$, and by  \eqref{eqn:choice-n-1}, we have
\begin{equation}
\begin{split}
\PPP^{1,res}_n (\Lip) &=\sum_{i=1}^{N^-}\sum_{j=1}^{N^+}  (1-\alpha_{m,n}^{i,j}) \PPP_n^{i,j}(\Lip)
\leq\sum_{i=1}^{N^-}\sum_{j=1}^{N^+}  \max\Big\{ 0, \PPP_n^{i,j}(\Lip)-  \PPP_m^{i,j}(\Lip) \Big\} 
\\&\leq 
\sum_{i=1}^{N^-}\sum_{j=1}^{N^+} |\pi_n(C_i^-\times C_j^+) - \pi_m(C_i^- \times C_j^+)|
 \leq \sum_{i=1}^{N^-}\sum_{j=1}^{N^+} \frac{\bar\e}{N^- N^+} = \bar\e.
 \end{split}
\end{equation}
As regards $\PPP^{2,res}_n$, it is supported on curves which either begin outside any $C_i^-$, $i=1,..., N^-$, or end outside any $C_j^+$, $j=1,..., N^+$, and therefore has small mass thanks to \eqref{eqn:choice-n-2}
\begin{equation}
\begin{split}
\PPP^{2,res}_n(\Lip) &= 1- \PPP_n \big(
 \{ \gamma: \gamma(0) \in \bigcup_{i=1}^{N^-} C_i^-, \, e_\infty(\gamma)  \in \bigcup_{j=1}^{N^+}C_j^+ \} \big)
 \\&\leq \PPP_n  \Big(
  \Big\{ \gamma: \gamma(0) \notin \bigcup_{i=1}^{N^-} C_i^- \Big\} \Big) +
 \PPP_n \Big(
 \Big\{ \gamma: e_\infty(\gamma)  \notin \bigcup_{j=1}^{N^+}C_j^+ \Big\} \Big)
 \\&\leq \mu_n^- \Big(\Big(
   \bigcup_{i=1}^{N^-} C_i^- \Big)^c \Big) +\mu_n^+ \Big(\Big(
   \bigcup_{j=1}^{N^+} C_j^+ \Big)^c \Big) 
 \leq 4 \bar\e.
\end{split}
\end{equation}

{\it Step 4: initial connection.}
Let us apply the Corollary~\ref{cor:small-balls-en-con-assegn} to the coupling
$$\pi^{I,i}=  \sum_{j=1}^{N^+} \tilde \mu_n^{-,i,j} \times \tilde \mu_m^{-,i,j}.
$$
We find a traffic plan $\PPP^{I,i} \in \TP (\pi^{I,i})$ 
with energy controlled by
$$\E(\PPP^{I,i}) \leq c(d,\alpha) \pi^{I,i}(\R^d\times \R^d)^\alpha r_i \leq c(d,\alpha)  \Big(\sum_{j=1}^{N^+} \tilde\mu_n^{-,i,j}(\R^d)\Big)^\alpha r_i \leq c(d,\alpha) \sum_{j=1}^{N^+} \mu_n^{-,i,j}(C_i^-)^\alpha r_i.
$$
Finally, recalling that $ \sum_{j=1}^{N^+} \mu_n^{-,i,j} \leq \mu_n$ and  \eqref{eqn:choice-n-3}, we obtain
$$\E(\PPP^{I,i}) \leq c(d,\alpha)\mu_n^{-}(C_i^-)^\alpha r_i  \leq 2 c(d,\alpha)\mu^{-}(C_i^-)^\alpha r_i.
$$
By means of Lemma~\ref{lemma:somma-plan}, we can write $\PPP^{I,i}$ as the following sum
$$\PPP^{I,i}:= \sum_{j=1}^{N^+} \PPP^{I,i,j} \qquad \mbox{where }\PPP^{I,i,j} \in \TP(\tilde \mu_n^{-,i,j} \times \tilde \mu^{-,i,j}_m).$$
Setting 
$$\PPP^{I}:= \sum_{i=1}^{N^-} \PPP^{I,i}$$
we obtain that its energy is small thanks to \eqref{eqn:cov-limit-2}
\begin{equation}
\label{eqn:enP1}
\E(\PPP^{I}) \leq \sum_{i=1}^{N^-} \E(\PPP^{I,i}) \leq 2c(d,\alpha)\sum_{i=1}^{N^-} \mu^{-}(C_i^-)^\alpha r_i \leq 2c(d,\alpha) \bar\e^\alpha.
\end{equation}

{\it Step 5: final connection: some useful traffic plans.}
Let us finally consider
$\PPP^{II, j} \in \TP \big(\sum_{i=1}^{N^-} \tilde \mu_m^{+,i,j}, |\sum_{i=1}^{N^-} \tilde \mu_m^{+,i,j}|\delta_{x_j} \big)$ (where we recall that $| \mu|$ denotes the total mass of any nonnegative measure $\mu$)
with
$$\E(\PPP^{II, j} ) \leq  c(d,\alpha)  \Big(\sum_{i=1}^{N^-} \tilde\mu_m^{+,i,j}(\R^d) \Big)^\alpha r_j %\leq c(d,\alpha) \Big(\sum_{i=1}^{N^-} \mu_n^{+,i,j}(C_j^+) \Big)^\alpha r_j 
\leq c(d,\alpha)\mu_m^{+}(C_j^+)^\alpha r_j  \leq 2 c(d,\alpha)\mu^{+}(C_j^+)^\alpha r_j.
$$
Setting 
$$\PPP^{II} := \sum_{j=1}^{N^+} \PPP^{II,j}$$
we obtain that its energy is small thanks to \eqref{eqn:cov-limit-2}
\begin{equation}\label{eqn:eP2}
\E(\PPP^{II}) \leq \sum_{j=1}^{N^+} \E(\PPP^{II,j}) \leq 2c(d,\alpha)\sum_{j=1}^{N^+} \mu^{+}(C_j^+)^\alpha r_j \leq 2c(d,\alpha) \bar \e^\alpha.
\end{equation}
By Lemma~\ref{lemma:somma-plan}, we can write
$$\PPP^{II,j}:= \sum_{i=1}^{N^-} \PPP^{II,i,j} \qquad \mbox{where }\PPP^{II,i,j} \in \TP\big( \tilde \mu_m^{+,i,j} 
\times  (|\tilde\mu_m^{+,i,j}|\delta_{x_j}) \big)$$
In a completely analogous way, we define
$$\PPP^{III}=\sum_{i=1}^{N^-}\sum_{j=1}^{N^+} \PPP^{III,i,j} \qquad \mbox{where }\PPP^{III,i,j} \in \TP\big( 
  (|\tilde \mu_n^{+,i,j}|\delta_{x_j})\times \tilde \mu^{+,i,j}_n\big)$$
with small energy
\begin{equation}\label{eqn:eP3}
\E(\PPP^{III})  \leq 2c(d,\alpha) \bar \e ^\alpha.
\end{equation}

{\it Step 6: Concatenation with correct coupling for fixed $i,j$}. For any $i \in \{1,..., N^-\}$ and $j\in \{1,..., N^+\}$, 
we apply Lemma~\ref{lemma:finiamola} with $\PPP_0 =  \tilde \PPP_n^{i,j}$, $\PPP_1$ a concatenation between $\PPP^{I,i,j}$ and $\tilde \PPP^{i,j}_m$, $\PPP_2= \PPP^{II,i,j}$, $\PPP_3 = \PPP^{III,i,j}$
to find a concatenation $\PPP^{IV,i,j}$ between $ \PPP^{II,i,j}$ and $\PPP^{III,i,j}$ and a further concatenation $ \PPP^{V,i,j}$ between $\PPP_1$ and $\PPP^{IV,i,j}$ with the same coupling as $\tilde \PPP_n^{i,j}$ %{\color{red} sistemare}
\begin{equation}
\label{eqn:marginalsP-Vij}
(e_0,e_\infty)_\# \PPP^{V,i,j} = (e_0,e_{\infty})_\# \tilde \PPP_n^{i,j}.
\end{equation}
%We consider a concatenation of $\PPP^{I,i,j}$, $\tilde \PPP^{i,j}_m$, $\PPP^{II,j}$.
We next set
$$\PPP^{IV}:= \sum_{i=1}^{N^-} \sum_{j=1}^{N^+} \PPP^{IV,i,j}, \qquad \PPP^{V}:= \sum_{i=1}^{N^-} \sum_{j=1}^{N^+} \PPP^{V,i,j}$$
and we notice that by Lemma~\ref{lemma:all-you-want-on-conc}(5) $\PPP^{IV}$ is a concatenation between $\PPP^{II}$ and $\PPP^{III}$. Hence, by Lemma~\ref{lemma:all-you-want-on-conc}(3) together with the smallness of the energies of $\PPP^{II}$ and $\PPP^{III}$ we have
\begin{equation}\label{eqn:eP4}
\E(\PPP^{IV})\leq \E(\PPP^{II})+\E(\PPP^{III}) \leq 4c(d,\alpha) \varepsilon_1.
\end{equation}
Thanks to the computation of the marginals of $ \PPP^{V,i,j}$ in \eqref{eqn:marginalsP-Vij}, we deduce that
\begin{equation}
\label{eqn:marginalsP-V}
(e_0,e_\infty)_\# \PPP^{V} = (e_0,e_{\infty})_\# \tilde \PPP_n
\end{equation}
%Moreover, thanks to the computation of the marginals of $ P^{IV,i,j}$ in \eqref{eqn:marginalsP-IV}
%\begin{equation}
%\label{eqn:marginalsP-IV}
%(e_0,e_\infty)_\# P^{V,i,j} = (e_0,e_{\infty})_\# \tilde \PPP_n^{i,j}.
%\end{equation}
%Define the concatenation of $\PPP^-$ and $\tilde \PPP$ made preserving the couplings at the level of sets. 
%In other words, define $\PPP^{part}$ as the concatenation given by Lemma .... NON SCRITTO

\bigskip

{\it Step 7: residuals.} Let $\PPP^{I,res}$ be an optimal traffic plan with coupling $(e_0)_\# \PPP_n^{res} \times (e_0)_\# \PPP_m^{res}$. By Corollary~\ref{cor:small-balls-en-con-assegn} and \eqref{eqn:piccolo-res} we have the energy bound
\begin{equation}
\label{en:Pres-1}
\E(\PPP^{I,res}) \leq  c(d,\alpha,{\rm diam}(X)) \PPP_n^{res}(\Lip)^\alpha \leq  5c(d,\alpha,{\rm diam}(X)) \bar\e^\alpha.
\end{equation}
Let $\PPP^{II,res}$ and $\PPP^{III, res}$ be optimal traffic plans with coupling $(e_\infty)_\# \PPP_m^{res} \times \delta_0$ and $\delta_0 \times (e_\infty)_\# \PPP_n^{res}$ respectively. By  Corollary~\ref{cor:small-balls-en-con-assegn} we have the energy bound
\begin{equation}\label{eqn:eP4res}
\E(\PPP^{II,res}) +\E(\PPP^{III,res})\leq  2c(d,\alpha,{\rm diam}(X)) \PPP_n^{res}(\Lip)^\alpha \leq  10c(d,\alpha,{\rm diam}(X)) \bar\e^\alpha.
\end{equation}
We apply Lemma~\ref{lemma:finiamola} with $\PPP_0 =  \PPP_n^{res}$, $\PPP_1$ a concatenation between $\PPP^{I,res}$ and $\PPP_m^{res}$, $\PPP_2= \PPP^{II,res}$, $\PPP_3 = \PPP^{III,res}$
to find a concatenation $\PPP^{IV,res}$ between $\PPP^{II,res}$ and $\PPP^{III,res}$ and a further concatenation $\PPP^{V,res}$ between $\PPP_1$ and $\PPP^{IV,res}$ with the same coupling as $\PPP^{res}_n$
\begin{equation}
\label{eqn:p5res-marg}
(e_0,e_\infty)_\# \PPP^{V,res}= (e_0,e_{\infty})_\# \PPP^{res}_n.
\end{equation}
%Moreover $P^{res}_5$ has small energy thanks to Lemma~\ref{lemma:all-you-want-on-conc}
%(3) and ... SERVE?
%$$\E(\PPP^{res}_5) \leq  c(d,\alpha,{\rm diam}(X)) P_n^{res}(\Lip)^\alpha \leq  c(d,\alpha,{\rm diam}(X)) (\e_2+\e_3)^\alpha.
%$$

{\it Step 8: Conclusion}. We set
$$\PPP^1_{n,m}: = \PPP^{I,res}+ \PPP^{I}, \qquad \PPP^2_{n,m} := \PPP^{IV,res}+ \PPP^{IV}, \qquad \PPP^5_{n,m} := \PPP^{V,res}+ \PPP^{V}$$
and we claim that they satisfy the properties in the statement of the theorem.

Indeed, the energy of $\PPP^1_{n,m}$ is estimated using \eqref{en:Pres-1} and \eqref{eqn:enP1} by
$$\E(\PPP^1_{n,m})\leq \E(\PPP^{I,res})+ \E(\PPP^{I})\leq 7c(d,\alpha,{\rm diam}(X)) \bar\e^\alpha\leq \e,$$
and the energy of $\PPP^2_{n,m}$ is estimated using \eqref{eqn:eP2}, \eqref{eqn:eP3}, and \eqref{eqn:eP4res} by
$$\E(\PPP^2_{n,m})\leq \E(\PPP^{IV,res})+ \E(\PPP^{IV})\leq 14c(d,\alpha,{\rm diam}(X)) \bar\e^\alpha\leq \e$$
where the last inequality in both lines follows by choosing $\bar\e$ suitably small in terms of $\e$.

By definition, $\PPP^V$ is a concatenation of $\PPP^I$ with a concatenation between $\tilde \PPP_m$ and $\PPP^{IV}$. Hence, adding term by term and employing Lemma~\ref{lemma:all-you-want-on-conc}(5), $\PPP^5_{n,m}$ is a concatenation of $\PPP^1_{n,m} = \PPP^{I,res}+ \PPP^{I}$ with a concatenation between $\PPP_m =\PPP^{res}_m +\tilde \PPP_m$ and $\PPP^2_{n,m} = \PPP^{IV,res}+ \PPP^{IV}$. This implies in particular that $\PPP^1_{n,m}$ has the same right-marginal as the initial marginal of $\PPP_m$, that is 
$\PPP^1_{n,m} \in \TP(\mu^-_n, \mu^-_m)$
Analogously, $\PPP^2_{n,m}\in \TP(\mu^+_n, \mu^+_m)$.

 Finally, the coupling identified by $\PPP^5_{n,m}$ is the same as the coupling of $\PPP_n$: indeed, we have already computed in \eqref{eqn:p5res-marg} and \eqref{eqn:marginalsP-V} the couplings of $\PPP^{V,res}$ and $\PPP^{V}$.
$$(e_0,e_\infty)_\# \PPP^{V} = (e_0,e_{\infty})_\# \PPP^{V,res}+ (e_0,e_\infty)_\# \PPP^{V} = (e_0,e_{\infty})_\# \PPP^{res}_n+ (e_0,e_{\infty})_\# \tilde \PPP_n = (e_0,e_{\infty})_\# \PPP_n.$$

%$\e_1+\e_2+\e_3$ ANDREBBERO CHIAMATI TUTTI $\bar \e$ TANTO GIOCANO LO STESSO RUOLO NELLA DIM.

%SISTEMARE $\PPP - \tilde \PPP$ con $\PPP_n - \tilde \PPP_n$ con $\PPP^{res}$ by means of Lemma~\ref{cor:small-balls-en-con-assegn} applied with $d= diam(X)$ and $\pi = ...$?

\end{proof}

\section{Finiteness of the number of connected components}\label{s:stab}
We recall the definition of connected components of a traffic plan. This requires some preliminary notation. 
Given $t_1\leq t_2\in[0,\infty)$ we define the map $res_{t_1,t_2}:\Lip\to\Lip$ as
\begin{equation*}
[res_{t_1,t_2}(\gamma)](t):=
\begin{cases}
\gamma(t_1) \qquad &\mbox{if } t\leq t_1,
\\
\gamma(t) \qquad &\mbox{if }  t_1<t< t_2,
\\
\gamma(t_2) \qquad &\mbox{if } t\geq t_2.
\end{cases}
\end{equation*}

We say that a curve $\gamma\in\Lip$ is \emph{simple} if there exists no triple of times $t_1<t_2<t_3$ such that $\gamma(t_1)=\gamma(t_3)\neq\gamma(t_2)$. For every $x,y\in X$,
\begin{enumerate}
\item{} we consider the set $A_{x,y} \subset \Lip$ 
$$A_{x,y}:=\{\gamma\in\Lip: \gamma \mbox{ is simple and }\exists t_1, t_2 \in \R^+ : t_1\leq t_2, \, \gamma(t_1)=x, \, \gamma(t_2)=y\};$$
\item we denote $t_p$ the first time for which $\gamma(t_p)=p$ and we observe that ${res_{t_x,t_y}}$ is well defined on $A_{x,y}$; moreover, fixing an $x_0 \in X$, we can extend ${res_{t_x,t_y}}$ to be the constant curve in $x_0$;
\item{} we define the traffic plan $\PPP_{x,y}:=({res_{t_x,t_y}})_{\#}(\PPP\trace A_{x,y})$;
\item{} we denote 
$$\Gamma_{x,y}:=\{p:|p|_{\PPP_{x,y}}>0\}\cup\{p:|p|_{\PPP_{y,x}}>0\}.$$
\end{enumerate}

We say that a traffic plan $\PPP$ is \emph{loop free} if $\PPP$ is supported on simple curves. Optimal traffic plans for the mailing problem are loop free (see Proposition 4.9 \cite{BCM}), and we recall that they enjoy the single path property (see \cite[Proposition 7.4]{BCM}). Hence if $\PPP\in\OTP(\pi_\PPP)$, then for every $x,y$ such that $\Gamma_{x,y}\neq\emptyset$, we have that $\PPP_{x,y}$ is supported on a single, simple curve $\gamma$ (up to reparametrizations of the curve) and $\PPP_{y,x}$ is supported on the same curve (parametrized backwards in time). In particular $\Gamma_{x,y}$ is the image of such curve $\gamma$.

For a traffic plan $\PPP$ we define $\Gamma_\PPP:=\{x:|x|_\PPP>0\}$. We say that a (finite or countable) family of traffic plans $\{\PPP_n\}_{n\in \N}$ is \emph{disjoint} if the measures $\Haus^1 \trace \Gamma_{\PPP_n}$ are mutually singular. 

%\begin{definition}
%Let $\PPP$ be a loop-free traffic plan. Let $x,y \in \{z: |z|_{\PPP}>0\}$. Consider the subset $A_{x,y} \subset \Lip$ of curves passing first through $x$ and then through $y$. We define the map $\varphi: \Lip \mapsto \Lip$, that is the identity in $\Lip \setminus A_{x,y}$ and restricts a curve $\gamma \in A_{x,y}$ from the time in which $\gamma$ passes through $x$ to the time in which $\gamma$ passes through $y$ (the loop-free property of $\PPP$ guarantees the uniqueness of these times for $\PPP$-a.e. $\gamma$).  We can define $\PPP_{xy}:=\varphi_{\#}(\PPP\trace A_{x,y})$ to be the traffic plan made of all pieces of fibers of $\PPP$ joining x to y. We denote its support by $\Gamma_{x,y}$.
%\end{definition}
\begin{definition}[(Connected components)]
Let $\PPP$ be an optimal traffic plan for the mailing problem, i.e. $\PPP \in \OTP(\pi_\PPP)$. Given an open set $U \subset \R^d$, two points $x, y$ are said to be connected in $U$ if there is a chain $x_1 = x, ..., x_l = y$ such that $\emptyset\neq\Gamma_{x_i,x_{i+1}}\subset U$. One can see that connectedness is an equivalence relation on $\Gamma_\PPP$.

We say that $F\subset\Gamma_\PPP$ is a connected component of $\PPP$ in $U$ if $F$ is an equivalence class with respect to such equivalence relation.
\end{definition}

The present section is devoted to show the finiteness of the number of connected components of any optimal traffic plan.
\begin{theorem}\label{rego2}
Let $\alpha>1-\frac{1}{d}$ and $\PPP$ be an optimal traffic plan for the mailing problem, i.e. $\PPP \in \OTP(\pi_\PPP)$, and assume that there exists $C>0$ such that
\begin{equation}
\label{eqn:long curves}
 \int_0^\infty |\dot \gamma| \, dt  \geq C\quad\mbox{ for $\PPP$-a.e. $\gamma$.}
\end{equation}
Then 
\begin{enumerate}
\item
for every $x \in \R^d \setminus (\supp(\mu^-_{\PPP})\cup\supp(\mu^+_{\PPP}))$, $\PPP$ has a finite number of connected components in $\R^d \setminus \{x\}$;
\item the traffic plan $\PPP$ has a finite number of connected components in $\R^d$.
\end{enumerate}
\end{theorem}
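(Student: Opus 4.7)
The strategy is a contradiction argument combining the stability Theorem~\ref{thm:main} with the upper bound for the mailing energy of Corollary~\ref{cor:small-balls-en-con-assegn}.

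I focus on part~(2); part~(1) will follow by placing the hub below at $x$ itself. Indeed, once (2) is established, any connected component of $\PPP$ in $\R^d\setminus\{x\}$ whose closure misses $x$ is already a connected component of $\PPP$ in $\R^d$, hence there can be only finitely many such; so if (1) were to fail, the infinitely many ``new'' components produced by removing $x$ would have to accumulate at $x$, and the construction below would apply (the hypothesis $x\notin\supp(\mu^-_\PPP)\cup\supp(\mu^+_\PPP)$ being exactly what makes $x$ a legitimate hub).

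Suppose then that $\PPP$ has infinitely many connected components $\{F_n\}_{n\in\N}$ in $\R^d$. Defining $\PPP_n:=\PPP\trace\{\gamma:\gamma([0,\infty))\subset F_n\}$ and $m_n:=\PPP_n(\Lip)$, the $\Gamma_{\PPP_n}$ are pairwise disjoint, so $\E(\PPP)=\sum_n\E(\PPP_n)$ and $\sum_n m_n=1$; from $|\cdot|_{\PPP_n}\leq m_n$ and \eqref{eqn:long curves} one gets $\E(\PPP_n)\geq Cm_n^\alpha$. By compactness of $X$, $\bigcup_n F_n$ admits an accumulation point $x^\ast$. For $\delta>0$ and a tail $T\subset\N$ such that $F_n\cap B(x^\ast,\delta)\neq\emptyset$ for every $n\in T$, set $\pi^T:=\sum_{n\in T}\pi_{\PPP_n}$. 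Corollary~\ref{cor:small-balls-en-con-assegn} provides traffic plans $\QQQ^-\in\TP((e_1)_\sharp\pi^T,\delta_{x^\ast})$ and $\QQQ^+\in\TP(\delta_{x^\ast},(e_2)_\sharp\pi^T)$ with
\[
\E(\QQQ^\pm)\leq c(d,\alpha)\Bigl(\sum_{n\in T}m_n\Bigr)^{\!\alpha}\operatorname{diam}(X),
\]
and Lemma~\ref{lemma:conc} concatenates them through $\delta_{x^\ast}$ into $\tilde\PPP^T\in\TP(\pi^T)$; glued with $\sum_{n\notin T}\PPP_n$ this yields $\tilde\PPP\in\TP(\pi_\PPP)$ with, by Lemma~\ref{lemma:all-you-want-on-conc}(3),
\[
\E(\tilde\PPP)-\E(\PPP)\leq 2c(d,\alpha)\Bigl(\sum_{n\in T}m_n\Bigr)^{\!\alpha}\operatorname{diam}(X)-C\sum_{n\in T}m_n^\alpha.
\]
A suitable choice of $T$ (grouping the $m_n$ into dyadic bands $[2^{-k-1},2^{-k}]$ and selecting a band containing enough comparable elements, which is possible since $\sum_n m_n=1$ and $m_n\to 0$) makes the right-hand side strictly negative via the subadditivity of $t\mapsto t^\alpha$, contradicting optimality.

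The role of Theorem~\ref{thm:main} is to control the limit procedure in the above construction, since the ``clean'' hub argument is naturally available only when $\pi^T$ is supported near $\{x^\ast\}\times X\cup X\times\{x^\ast\}$: for general $\pi^T$ one approximates it by such couplings, runs the hub construction on the optimal competitors, and invokes stability to transfer the bound to the limit. The main obstacle is making the above energy gain \emph{strict}: the favorable algebraic inequality $\sum_{n\in T}m_n^\alpha\gg(\sum_{n\in T}m_n)^\alpha$ must be combined both with the geometric constraint that the detour to $x^\ast$ is affordable --- which is exactly where the condition $\alpha>1-\frac{1}{d}$, through Corollary~\ref{cor:small-balls-en-con-assegn}, is used --- and with the requirement that the concatenation preserves the prescribed coupling $\pi_\PPP$, which forces the use of Lemmas~\ref{lemma:conc} and~\ref{lemma:finiamola}.
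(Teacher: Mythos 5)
There is a genuine gap at the heart of your argument: the claimed strict negativity of
\[
2c(d,\alpha)\Bigl(\sum_{n\in T}m_n\Bigr)^{\alpha}\operatorname{diam}(X)-C\sum_{n\in T}m_n^{\alpha}
\]
cannot in general be achieved by any choice of $T$. Your selection principle (``grouping the $m_n$ into dyadic bands and selecting a band containing enough comparable elements, which is possible since $\sum_n m_n=1$'') is false: if, say, $m_n=c\,2^{-n}$, every dyadic band contains one element, and for \emph{any} subset $T$ with largest element $m_{n_1}$ one has $\sum_{n\in T}m_n^{\alpha}\le (1-2^{-\alpha})^{-1}m_{n_1}^{\alpha}\le (1-2^{-\alpha})^{-1}\bigl(\sum_{n\in T}m_n\bigr)^{\alpha}$. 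So the ratio $\sum_{n\in T}m_n^{\alpha}/\bigl(\sum_{n\in T}m_n\bigr)^{\alpha}$ is bounded by a constant independent of $T$, and since the length lower bound $C$ in \eqref{eqn:long curves} can be arbitrarily small compared to $\operatorname{diam}(X)$, the hub detour is never affordable. Subadditivity alone gives an unbounded gain only when some band contains arbitrarily many comparable masses, which $\sum m_n=1$ does not guarantee. Your closing remark that stability ``controls the limit procedure'' does not repair this; as written, stability plays no quantitative role in your energy comparison.

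This is exactly the difficulty the paper's proof is built to circumvent. Instead of sending a cluster of small components to a point hub (cost $\sim\operatorname{diam}(X)\cdot(\text{mass})^{\alpha}$, with a fixed constant), it normalizes each component to unit mass, extracts a weak$^*$ limit, and uses Theorem~\ref{thm:good-version} twice: first to produce connections between the marginals of $\PPP_n'$ and $\PPP_m'$ of energy $\le\e$ for the \emph{normalized} plans --- hence $\le\e|\PPP_n|^{\alpha}$ after rescaling, with $\e$ arbitrarily small, which a hub at a fixed distance cannot deliver --- and second (via Theorem~\ref{thm:main}) to show that $\E(\PPP_n)/|\PPP_n|^{\alpha}$ is Cauchy. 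The competitor reroutes one small component $\PPP_n$ through a rescaled copy of a fixed larger component $\PPP_m$, and the strict gain comes from $\alpha t<t^{\alpha}$ for $t=|\PPP_n|/|\PPP_m|$ small, not from subadditivity over a cluster. Two further points you should not gloss over: the decomposition into component plans must be done by restricting each curve to its maximal time interval inside the component (Lemma~\ref{Lemma plan canonico}), since for $U=\R^d\setminus\{x\}$ a single curve may visit several components, so $\PPP\trace\{\gamma:\gamma([0,\infty))\subset F_n\}$ is not the right object and your reduction of (1) to (2) does not go through as stated; and one must first prove that each component plan, and each pairwise sum, is itself optimal for its own coupling (Lemma~\ref{primo}) before the comparison argument can contradict the optimality of $\PPP$.
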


\begin{remark}
The hypothesis \eqref{eqn:long curves} is clearly implied by the stronger assumption, recurrent in the literature, that
\begin{equation}
\label{eqn:distant-supports}
d(\supp(\mu^-_{\PPP}),\supp(\mu^+_{\PPP}))>0.
\end{equation}

%and for $\PPP$-a.e. $\gamma$ one of the following happens
%$$\begin{cases} \text{either} & \text{$\gamma(0) \in A$ and $\gamma(\infty) \in B$} \\
%\text{or} & \text{$\gamma(0) \in B$ and $\gamma(\infty) \in A$.}\end{cases}$$
%The proof below will cover this case as well with minor changes.
%
%there exist two sets $A,B \subset \R^d$ sucht that 
%\begin{equation}\label{0}
%d(A,B)>0, 
%\end{equation}
%and for $\PPP$-a.e. $\gamma$ one of the following happens
%$$\begin{cases} \text{either} & \text{$\gamma(0) \in A$ and $\gamma(\infty) \in B$} \\
%\text{or} & \text{$\gamma(0) \in B$ and $\gamma(\infty) \in A$.}\end{cases}$$
%The proof below will cover this case as well with minor changes.
\end{remark}

A similar situation to the one presented in Theorem~\ref{rego2} happens for the irrigation problem (when the coupling between the initial and final measure is not assigned; see \cite[Lemma 8.10 and 8.11]{BCM}) and the general strategies of proof are related in the two cases. On the other hand, in the irrigation problem one can push this result further and show that the traffic plan has the finite tree structure far from the support of the initial and target measures. For the mailing problem, this result is not true, indeed one could build examples where the optimal traffic plan contains cycles (far from the support of the initial and target measures) even when the marginals are given by a finite number of Dirac deltas; this introduces some additional difficulties to the arguments performed below.

%On the other side, to carry out our strategy there are some significant distances. 

In this section we will use the following 
\begin{lemma}\label{lemma_rect_mult} For every $\gamma\in\Lip$ let $M_\gamma$ be a subset of $Im(\gamma)$ (chosen in a measurable fashion). Let $M$ be a $1$-rectifiable set and $\theta\in L^1(\Haus^1\trace M)$ and consider the measure $\nu:=\theta\Haus^1\trace M$. Assume that $\nu$ can be written in the form $\nu=\int\nu_\gamma d\PPP(\gamma)$, where $\nu_\gamma:=\Haus^1\trace M_\gamma$ and $\PPP\in\M(\Lip)$. Then for $\nu$-a.e. $x$ it holds
$$\theta(x)=\PPP(\{\gamma:x\in M_\gamma\}).$$
\end{lemma}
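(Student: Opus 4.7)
The plan is to identify the two expressions for $\nu$ tested against arbitrary Borel sets and then use the uniqueness of Radon--Nikodym densities. First I would fix a Borel set $A\subset M$ and compute $\nu(A)$ in two ways. On the one hand, by the assumption $\nu=\theta\Haus^1\trace M$, we have
$$\nu(A)=\int_A\theta(x)\,d\Haus^1(x).$$
On the other hand, by the assumption $\nu=\int\nu_\gamma\,d\PPP(\gamma)$, we have
$$\nu(A)=\int\nu_\gamma(A)\,d\PPP(\gamma)=\int\Haus^1(A\cap M_\gamma)\,d\PPP(\gamma).$$

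The key step is an application of Fubini--Tonelli to the last integral. Since $\{M_\gamma\}_{\gamma}$ is chosen in a measurable fashion, the set $E:=\{(x,\gamma)\in M\times\Lip:x\in M_\gamma\}$ is measurable with respect to $\Haus^1\trace M\otimes\PPP$, so for each $\gamma$,
$$\Haus^1(A\cap M_\gamma)=\int_A\mathbf{1}_{M_\gamma}(x)\,d\Haus^1(x),$$
and exchanging the order of integration we obtain
$$\int\Haus^1(A\cap M_\gamma)\,d\PPP(\gamma)=\int_A\Big(\int\mathbf{1}_{M_\gamma}(x)\,d\PPP(\gamma)\Big)\,d\Haus^1(x)=\int_A f(x)\,d\Haus^1(x),$$
where we set $f(x):=\PPP(\{\gamma:x\in M_\gamma\})$. (To justify Fubini I would check that $f\in L^1(\Haus^1\trace M)$, which is immediate from the total mass of $\nu$ being finite, taking $A=M$.)

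Combining the two expressions for $\nu(A)$ yields
$$\int_A\theta(x)\,d\Haus^1(x)=\int_A f(x)\,d\Haus^1(x)\quad\text{for every Borel $A\subset M$,}$$
and by the arbitrariness of $A$, together with the $\sigma$-finiteness of $\Haus^1\trace M$ coming from $\theta\in L^1(\Haus^1\trace M)$ and $1$-rectifiability of $M$, we conclude $\theta(x)=f(x)$ for $\Haus^1$-a.e. $x\in M$, i.e. for $\nu$-a.e. $x$.

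The only real subtlety I expect is the measurability of $E$, which is why the statement includes the assumption that $\gamma\mapsto M_\gamma$ is chosen in a measurable way; everything else is a routine application of Fubini and uniqueness of densities.
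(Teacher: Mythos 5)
Your proof is correct and follows essentially the same route as the paper's: compute $\nu(E)$ for an arbitrary Borel $E\subset M$ in two ways, apply Fubini--Tonelli to swap the order of integration, and conclude by uniqueness of Radon--Nikodym densities. The paper's version is terser (it does not spell out the measurability of $\{(x,\gamma):x\in M_\gamma\}$ or the integrability check), but the argument is the same.
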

\begin{proof}
We can easily compute for every $E\subset M$
\begin{equation*}
\begin{split}
\int_{E}\theta d\Haus^1&=\nu(E)=\int_{\Lip}\nu_\gamma(E) d\PPP(\gamma)=\int_{\Lip} \int_{M_\gamma\cap E}d\Haus^1 d\PPP(\gamma)\\
&=\int_{E}\int_{\{\gamma:x\in M_\gamma\}} d\PPP(\gamma)d\Haus^1(x)=\int_{E}\PPP(\{\gamma:x\in M_\gamma\})d\Haus^1(x).\qedhere
\end{split}
\end{equation*}  
\end{proof}
The following lemma  associates a canonical traffic plan to every connected component of a given traffic plan. 

\begin{lemma}\label{Lemma plan canonico} Let $\PPP \in \OTP(\pi_\PPP)$ be an optimal traffic plan for the mailing problem, $U$ an open set and let $F$ be a connected component of $\PPP$ in $U$. Then there exists a Borel function $\phi:\gamma \in \Lip\to (\phi_1(\gamma),\phi_2(\gamma)) \in [0,\infty)^2$ such that, denoting $\QQQ:=({\rm{res_{\phi_1(\cdot),\phi_2(\cdot)}}})_\sharp \PPP$, it holds 
\begin{equation}\label{uau}
 \mbox{$|x|_\QQQ=|x|_{\PPP}$ for $\Haus^1$-a.e. $x\in F$ \quad and \quad $|x|_\QQQ=0$ for $\Haus^1$-a.e. $x\not\in F$.}
 \end{equation} 
\end{lemma}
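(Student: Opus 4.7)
The plan is to define, for each $\gamma\in\Lip$,
$$\phi_1(\gamma):=\inf\gamma^{-1}(F),\qquad \phi_2(\gamma):=\sup\gamma^{-1}(F),$$
whenever $\gamma^{-1}(F)\neq\emptyset$, and to extend $\phi_1,\phi_2$ by a fixed measurable default value (for instance $\phi_1(\gamma)=\phi_2(\gamma)=T(\gamma)+1$, so that $res_{\phi_1,\phi_2}(\gamma)$ is a constant curve at $e_\infty(\gamma)$) on the remaining curves. Borel measurability of these two functions is a routine check: $F$ is Borel, being obtained by countable chain operations on the closed sets $\Gamma_{x,y}$, and inf/sup of Borel subsets of $\R^+$ depend Borel-measurably on $\gamma$ via the map $\gamma\mapsto\gamma^{-1}(F)$ (using the topology of uniform convergence on compact subsets of $\R^+$).

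For the first identity in \eqref{uau}, since $\PPP$ is loop free, $\PPP$-a.e. $\gamma$ is simple. If such a $\gamma$ passes through some $x\in F$, the unique time $t_x$ with $\gamma(t_x)=x$ lies in $\gamma^{-1}(F)\subseteq[\phi_1(\gamma),\phi_2(\gamma)]$, so $x$ belongs to the image of $res_{\phi_1,\phi_2}(\gamma)$. Conversely, this image is contained in $\gamma([0,\infty))$, so the two events ``$x$ is visited by $\gamma$'' and ``$x$ is visited by $res_{\phi_1,\phi_2}(\gamma)$'' coincide $\PPP$-almost surely, yielding $|x|_\QQQ=|x|_\PPP$.

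The delicate part is the second identity, i.e. $|x|_\QQQ=0$ for $\Haus^1$-a.e. $x\notin F$. Setting $M_\gamma:=\gamma([\phi_1(\gamma),\phi_2(\gamma)])$ and applying Lemma \ref{lemma_rect_mult}, this reduces to proving
$$\int_{\Lip}\Haus^1\bigl(M_\gamma\setminus F\bigr)\,d\PPP(\gamma)=0.$$
Any $x\in M_\gamma\setminus F$ lies on a ``detour'' of $\gamma$: the curve visits $F$ at times $t^-<t_x<t^+$ (with $\gamma(t_x)=x$), and by the single-path property $\gamma|_{[t^-,t^+]}$ is the unique simple network path joining $\gamma(t^-)$ to $\gamma(t^+)$. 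Since $x\notin F$, this path cannot be entirely contained in $U$, otherwise it would provide a one-step chain in $U$ forcing $x\in F$; hence every detour must exit $U$. The main obstacle is upgrading this purely structural observation into the null-measure estimate above. The natural strategy is to combine it with the optimality of $\PPP$: the two endpoints $\gamma(t^\pm)$ of a detour lie in $F$ and are therefore connected in $U$ by an alternative chain that does not pass through the detour, so a detour of positive $\Haus^1$-weighted contribution could in principle be rerouted along that alternative chain (using the concatenation machinery developed earlier) to produce a competitor with strictly smaller $\alpha$-energy, contradicting optimality. The technical crux is performing this rerouting in a Borel-measurable way preserving the prescribed coupling, exploiting the chain structure of $F$ and the disintegration tools already developed.
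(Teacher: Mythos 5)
Your choice of $\phi_1=\inf\gamma^{-1}(F)$, $\phi_2=\sup\gamma^{-1}(F)$ and your proof of the first identity in \eqref{uau} are fine, but the second identity — which is the entire content of the lemma — is left as a strategy sketch, and the strategy you sketch does not work. You propose to kill a detour (a portion of $\gamma$ between two visits to $F$ that leaves $F$) by rerouting it along an alternative chain inside $U$ and claiming the competitor has \emph{strictly smaller} $\alpha$-energy, contradicting optimality. There is no reason for a strict decrease: the alternative chain has positive length and the rerouted mass increases the energy along it, so the comparison is a genuine two-sided estimate, not a free contradiction. Worse, the premise that ``exiting $U$ is suboptimal'' is false: $U$ plays no role in the variational problem, and an optimal plan can perfectly well send curves through $U^c$ (think of $U=\R^d\setminus\{x_0\}$ with the network branching at $x_0$). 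Delicate energy comparisons of exactly this rerouting type are what the paper needs for the \emph{later} Lemma \ref{primo}, and even there they only yield optimality of each component's plan, not the structural fact you need here.

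The paper's proof of the second identity is structural, not variational, and this is the idea you are missing. Define the bad set $B_F(\gamma)$ of time pairs $(s,t)$ with $\gamma(s),\gamma(t)\in F$ but $\gamma(r)\notin F$ for some intermediate $r$. If the set of curves with $\Leb^2(B_F(\gamma))>0$ had positive $\PPP$-measure, a Fubini--Besicovitch density argument (Lemma \ref{lemma_rect_mult} applied to the measures \eqref{beee}--\eqref{muquattro}) extracts \emph{two fixed points} $x,y\in F$ and a positive-$\PPP$-measure set of curves all passing through $x$ and then $y$ with a detour in between. The single path property then says all these fibers traverse one and the same arc $\gamma_0$ from $x$ to $y$, so $\Gamma_{x,y}=\mathrm{Im}(\gamma_0)$, and the definition of connected component forces every point of $\Gamma_{x,y}$ to lie in $F$ — contradicting the existence of a point of the detour outside $F$. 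Hence $\PPP(B_F)=0$, i.e.\ for $\PPP$-a.e.\ $\gamma$ the set $\gamma^{-1}(F)$ is an interval up to a Lebesgue-null set of times; since $\gamma$ is $1$-Lipschitz this null set has $\Haus^1$-null image, and the second identity follows from Lemma \ref{lemma_rect_mult}. In short: the correct conclusion is that detours outside $F$ do not exist (mod null), not that they exist and can be rerouted away.
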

\begin{proof} We observe that we can reduce our analysis to the case when $\PPP$ is supported on (simple) curves parametrized by arc-length. Indeed, to any traffic plan $\PPP$ we can associate another traffic plan $\tilde \PPP$, where each curve in its support is reparametrized by arc-length; $\tilde \PPP$ is also optimal since the $\alpha$-energy is invariant under reparametrization. Assuming that the statement holds for $\tilde \PPP$, giving a map $\tilde \phi$, we obtain $\phi$ simply setting, for $i=1,2$ 
$$\phi_i(\gamma):=\gamma^{-1}(\tilde\gamma(\tilde\phi_i(\tilde\gamma))),$$
where $\tilde\gamma$ is the arch-length parametrization of $\gamma$ and we denoted 
$$\gamma^{-1}(x):=\inf\{t\geq 0:\gamma(s)\neq x, \forall s\leq t\}.$$
 
{\it Step 1.} 
 For every $\gamma\in\Lip$, we define the "bad set" of $\gamma$ as
$$B_F(\gamma):=\{(s,t)\in[0,T(\gamma)]^2:\gamma(s)\in F, \gamma(t)\in F, {\rm{ and }}\; \exists r\in(s,t):\gamma(r)\not\in F\}.$$

We denote the set of \rm{bad curves} 
$$B_F:=\{\gamma\in\Lip:\Leb^2(B_F(\gamma))>0\}.$$
We claim that, if $\PPP(B_F)>0$, then there exist two points $x,y\in F$ and a set $B\subset B_F$ with $\PPP(B)>0$, such that for every $\gamma\in B$ there exist $(t_1,t_2)\in B_F(\gamma)$ with $\gamma(t_1)=x$, $\gamma(t_2)=y$ (note that $t_1$ and $t_2$ may depend on $\gamma$).

%By assumption, there exist $\varepsilon>0$ and a set $K\subset B_F$ such that $P(K)>\varepsilon$ and for every $\gamma\in K$ it holds
%$$\Leb^1(\pi_i(B_F))>\varepsilon, \quad\mbox{ for $i=1,2.$}$$
%
%Consider for every $\gamma\in K$ the measure 
%$$\mu_\gamma:=\Haus^1\trace\gamma(\pi_1(B_F(\gamma))\cup \pi_2(B_F(\gamma))).$$
%By assumption this is a non-trivial measure
To show this claim, assume that $\PPP(B_F)>0$ and let $\mu$ and $\nu$ be the positive measures% respectively defined by
\begin{equation}\label{beee}
\mu:=\int_{\Lip} \Haus^1\trace(\rm{Im}(\gamma)\cap F)d\PPP(\gamma),
\end{equation}
\begin{equation}\label{mutre}
\nu(E):=\int_{B_F} \Haus^1(E\cap\{\gamma(t):t\in\eta_1(B_F(\gamma))\})d\PPP(\gamma), \quad\mbox{ for every $E$ Borel},
\end{equation}
where we denoted with $\eta_1$ the projection from $\R\times \R$ onto the first component. 
Since $B_F$ has positive measure, then by Fubini's theorem $\nu(\R^d)>0$. By Theorem \ref{Theoret}, $\mu$ is absolutely continuous with respect to $\Haus^1$ and is supported on a rectifiable set. Since clearly $\nu\leq\mu$, then $\nu$ is a rectifiable measure, i.e.
$$\nu=\theta \Haus^1\trace M, \qquad \mbox{where $\theta>0$ and $M$ is $1$-rectifiable}.$$ By Lemma \ref{lemma_rect_mult}, for $\Haus^1$-a.e. point $x\in M$, there exists a subset $B'$ of $B_F$ of positive $\PPP$-measure such that for every curve $\gamma\in B'$ it holds $x\in\{\gamma(t):t\in\eta_1(B_F(\gamma))\}$. In particular $x\in F$.  
Let $\rho$ be the positive measure defined by
\begin{equation}\label{muquattro}
\rho(E):=\int_{B'} \Haus^1(E\cap\{\gamma(t):t\in\eta_2(B_F(\gamma))\})d\PPP(\gamma), \quad\mbox{ for every $E$ Borel},
\end{equation}
where $\eta_2$ is the projection from $\R\times \R$ onto the second component. 
With a similar argument we prove that there exist a point $y\neq x$, $y\in F$ of positive multiplicity for $\rho$, and a subset $B\subset B'$ of positive measure such that $y\in\{\gamma(t):t\in\eta_2(B_F(\gamma))\}$, for every $\gamma\in B$. 

{\it{Step 2}.}We prove that $\PPP(B_F)=0$ and that for $\PPP$-a.e. $\gamma\in\Lip$
\begin{equation}
\label{eqn:represent}
\{t\in[0,T(\gamma)]:\gamma(t)\in F\}=I(\gamma)\cup E(\gamma),
\end{equation}
where $I(\gamma)$ is an open interval (possibly trivial) and $E(\gamma)$ has measure zero.

By contradiction, assume that  $\PPP(B_F)>0$ and let $x,y$ be two points as in the claim proved in the previous step. By the single path property of optimal traffic plans, almost all fibers coincide between $x$ and $y$ with a unique curve $\gamma_0$ (or with the same curve $\gamma_0$ parametrized backwards in time). In particular there exists a point $z\in {\rm{Im}}(\gamma_0)\setminus F$. On the other hand, by definition of connected component, $z$ belongs to the same connected component of $x$ and $y$, hence $z\in F$, which is a contradiction.

As a consequence of the fact that $\PPP(B_F)=0$, we deduce \eqref{eqn:represent} from the definition of $B_F(\gamma)$.

{\it Step 3}. We conclude the proof. Consider the function 
$$\phi(\gamma)=
\begin{cases}
(\phi_1(\gamma),\phi_2(\gamma)), &{\mbox{ if $I(\gamma)=(\phi_1(\gamma),\phi_2(\gamma))$ is defined and non trivial}},\\
(0,0), &\mbox{ otherwise}.
\end{cases}$$
By Step 2, it holds
\begin{equation}\label{muuu}
\mu=\int_{\Lip} \Haus^1\trace(\rm{Im}({\rm{res}}_{\phi(\gamma)}(\gamma)))d\PPP=\int_{\Lip} \Haus^1\trace({\rm{Im}}(\gamma))d\QQQ(\gamma).
\end{equation}
By Lemma \ref{lemma_rect_mult} and the definition of multiplicity, the conclusion of Lemma \ref{Lemma plan canonico} follows.\end{proof}

\begin{remark}\label{disj}
By Lemma \ref{Lemma plan canonico}, we know that the multiplicity of different connected components of an optimal traffic plan $\PPP$ with respect to an open set $U$ is nonzero on disjoint sets, namely $|x|_{\PPP'} > 0$ implies that $|x|_{\PPP''} = 0$ for all $\PPP'$ and $\PPP''$ traffic plans associated to different connected components. Hence, any countable subfamily of connected components % This automatically implies that the traffic plans $\{\PPP_n\}_n$ are 
is made by disjoint traffic plans. 
\end{remark}
%\begin{remark}\label{energpos} Given an optimal traffic plan $\PPP$, we observe that, under assumption \eqref{eqn:long curves}, for every $x\in \R^d\setminus(\supp(\mu^-_{\PPP})\cup\supp(\mu^+_{\PPP}))$ the traffic plan canonically associated to each connected component of $\PPP$ either in $\R^d$ or in $\R^d\setminus \{x\}$ has strictly positive energy. Indeed, by Lemma \ref{Lemma plan canonico}, the traffic plan associated to any connected component with positive length has positive $\alpha$-mass, and, since the connected components of a traffic plan are arcwise connected, the only connected components which can carry zero energy are the singletons. But under assumption \eqref{eqn:long curves}, the connected components of $\PPP$ either in $\R^d$ or in $\R^d\setminus \{x\}$ cannot be singletons. This last statement can be proved similarly to the last step of Lemma \ref{Lemma plan canonico}.
%From the finiteness of the energy of $\PPP$ and the fact that all connected components are disjoint (as observed in Remark \ref{energpos}) and carry strictly positive energy, we deduce that there can only be at most countably many connected components.
%\end{remark}

%Indeed, by Lemma \ref{Lemma plan canonico}, the traffic plan associated to any connected component with positive length has positive $\alpha$-mass, and, since the connected components of a traffic plan are arcwise connected, the only connected components which can carry zero energy are the singletons. But under assumption \eqref{eqn:long curves}, the connected components of $\PPP$ in $U$ cannot be singletons. 
\begin{remark}(Optimal plans have at most countably many connected components.)\label{energpos} Given an optimal traffic plan $\PPP$ and $U\subset \R^d$ open, we claim that, under assumption \eqref{eqn:long curves}, the traffic plan canonically associated to each connected component of $\PPP$ in $U$ has strictly positive energy. To prove it, we consider a connected component $F$ of $\PPP$ and a point $x_0 \in F$. We define the traffic plan
$$\PPP':= \PPP \trace \{ \gamma \in \Lip : x_0 \in \text{Im(}\gamma\text{)} \}.$$
We construct a new traffic plan $\PPP''$ restricting each simple curve passing through $x_0$ to the maximal interval of times containing $t_{x_0}$ where the image of the curve is in $U$. By \eqref{eqn:long curves} and \eqref{energy}, we have that 
$$\E(\PPP'') \geq \min\{C, \text{dist}(x_0, \partial U) \}.$$
Since $\PPP'$ is simple path, by Remark \eqref{simple} we deduce that $\MM(\PPP'')>0$. This implies the existence of a point $ y\neq x_0$ such that $|y|_{\PPP''}>0$. By the single path property, we conclude that $y \in F$ and since the connected components are arc-wise connected by definition, this implies the claim.
From the finiteness of the energy of $\PPP$ and the fact that all connected components are disjoint (as observed in Remark \ref{disj}) and carry strictly positive energy, we deduce that there can only be at most countably many connected components.
\end{remark}

\begin{remark}Let $\QQQ$ be the traffic plan canonically associated to a connected component $F$ of an optimal traffic plan $\PPP$ through Lemma \ref{Lemma plan canonico}, i.e. $\QQQ:=({\rm{res_{\phi_1(\cdot),\phi_2(\cdot)}}})_\sharp \PPP$. We can compute
\begin{equation*}%\label{spezza}
\begin{split}
\int_{\Lip}\int_{\gamma(t) \in F} &|\gamma(t)|^{\alpha-1}  |\dot \gamma(t)|d\PPP =\int_{\Lip}\int_{\gamma(t) \in F} |[res_{\phi_1(\cdot),\phi_2(\cdot)}(\gamma)](t)|^{\alpha-1}  |[\frac{d}{dt} [res_{\phi_1(\cdot),\phi_2(\cdot)}(\gamma)](t)|d\PPP\\
&=\int_{\Lip}\int_0^\infty |\gamma(t)|^{\alpha-1}  |\dot \gamma(t)|d\QQQ=\E(\QQQ)
\end{split}
\end{equation*} 
\end{remark}

%{\color{blue} IMPORTANTE: Se non vogliamo mettere ipotesi sul complementare di U servira' questo lemma, invece per come e' scritto ora con l'ipotesi su compl di U questo lemma non serve \begin{lemma}\label{Lemma plan canonico 2} 
%Let $\PPP$ be an optimal traffic plan $\PPP \in \OTP(\pi_\PPP)$ satifying \eqref{eqn:long curves}, $U\subset \R^d$ be an open set {\color{blue} and  $\{F_n\}_{n \geq 1}$ be the (finite or countable) family of connected components of $\PPP$ in $U$}. Then there exists a traffic plan $\PPP_0$ such that
%$$ \mbox{$|x|_{\PPP_0}=|x|_{\PPP}$ for $\Haus^1$-a.e. $x\not\in \cup_{n\geq 1}F_n$ \quad and \quad $|x|_{\PPP_0}=0$ for $\Haus^1$-a.e. $x\in \cup_{n\geq 1}F_n$.}$$ 
%\end{lemma}
%MARIA: IL LEMMA PRECEDENTE SI ELIMINA? Si se siete entrambi d'accordo con l'ipotesi aggiuntiva che ho fatto su U.}

\begin{lemma}\label{primo}
Let $\PPP \in \OTP(\pi_\PPP)$ be an optimal traffic plan for the mailing problem, satisfying \eqref{eqn:long curves}, $U\subset \R^d$ be an open set such that $\Haus^1(U^c)=0$  and  $\{F_n\}_n$ be the (finite or countable) family of connected components of $\PPP$ in $U$. Then the traffic plans $\{\PPP_n\}_n$ canonically associated to $\{F_n\}_n$ through Lemma \ref{Lemma plan canonico} are optimal, disjoint traffic plans. Moreover for every couple $(n,m) \in \N^2$, the traffic plan $\PPP_n+\PPP_m$ is optimal.
\end{lemma}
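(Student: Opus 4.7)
The disjointness of the family $\{\PPP_n\}_n$ is essentially immediate: by Lemma \ref{Lemma plan canonico}, the multiplicity $|x|_{\PPP_n}$ is positive on $F_n$ (up to $\Haus^1$-null sets) and vanishes elsewhere, so Remark \ref{disj} gives that the measures $\Haus^1\trace\Gamma_{\PPP_n}$ are mutually singular. This takes care of the ``disjoint'' part of the statement.

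The backbone of the optimality arguments is an $\alpha$-mass decomposition. Since $\PPP\in\OTP(\pi_\PPP)$ is loop-free (by Proposition 4.9 of \cite{BCM}), each $\PPP_n$ and each sum $\PPP_n+\PPP_m$ inherits the simple path property, so by Proposition \ref{massaenergia} and Remark \ref{simple}, $\E(\PPP_n)=\MM(\PPP_n)$ and $\E(\PPP_n+\PPP_m)=\MM(\PPP_n)+\MM(\PPP_m)$ (the last equality via disjointness). Combining Lemma \ref{Lemma plan canonico} with the hypothesis $\Haus^1(U^c)=0$ gives $\MM(\PPP)=\sum_k\MM(\PPP_k)$, and splitting the energy integral according to whether the curve lies inside or outside $F_n\cup F_m$ yields
\begin{equation*}
\E(\PPP)=\E(\PPP_n)+\E(\PPP_m)+R,
\end{equation*}
where $R$ collects the contribution from the portions of $\PPP$-curves lying outside $F_n\cup F_m$.

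The optimality of each $\PPP_n$ (and of $\PPP_n+\PPP_m$) will be proved by contradiction through a replacement argument. Given a candidate competitor $\tilde\QQQ\in\TP(\pi_{\PPP_n+\PPP_m})$ with $\E(\tilde\QQQ)<\E(\PPP_n+\PPP_m)$, first apply Lemma \ref{lemma:somma-plan} to decompose $\tilde\QQQ=\tilde\QQQ_n+\tilde\QQQ_m$ with $\tilde\QQQ_k\in\TP(\pi_{\PPP_k})$. Then construct $\tilde\PPP\in\TP(\pi_\PPP)$ by disintegrating $\tilde\QQQ_k$ with respect to its coupling and, for each curve $\gamma$ of $\PPP$ passing through $F_k$, replacing the piece of $\gamma$ inside $F_k$ (identified by the map $\phi$ of Lemma \ref{Lemma plan canonico}) by a randomly drawn curve from $\tilde\QQQ_k^{x,y}$ with matching entry-exit $(x,y)$, using the concatenation machinery of Lemma \ref{lemma:all-you-want-on-conc}. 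Since initial and final points are preserved, $\tilde\PPP$ has coupling $\pi_\PPP$. The crucial energy estimate uses that $\alpha-1<0$: from $|z|_{\tilde\PPP}\geq|z|_\PPP$ on the unchanged before/between/after portions and $|z|_{\tilde\PPP}\geq|z|_{\tilde\QQQ_k}$ along the replacement pieces (both inequalities holding because each new $\tilde\gamma$ contains the corresponding sub-pieces), the integrands $|z|^{\alpha-1}$ satisfy the reverse inequalities, yielding
\begin{equation*}
\E(\tilde\PPP)\leq R+\E(\tilde\QQQ_n)+\E(\tilde\QQQ_m).
\end{equation*}
Combined with $\E(\PPP)\leq\E(\tilde\PPP)$ (optimality of $\PPP$), this gives $\E(\PPP_n)+\E(\PPP_m)\leq\E(\tilde\QQQ_n)+\E(\tilde\QQQ_m)$ for every such decomposition. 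Specializing $\tilde\QQQ_m=\PPP_m$ (or $\tilde\QQQ_n=\PPP_n$) yields the optimality of each $\PPP_n$ individually, and then $\E(\pi_n+\pi_m)\geq\E(\pi_n)+\E(\pi_m)=\E(\PPP_n+\PPP_m)$ gives the optimality of the sum.

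The main obstacle is ensuring that the replacement construction genuinely produces a traffic plan in $\TP(\pi_\PPP)$ and that the multiplicity bounds go through even when the competitor $\tilde\QQQ_k$ routes some curves outside $\overline{F_k}$. The concatenation for curves passing through both $F_n$ and $F_m$ (which must carry two separate replacement pieces, one from each of $\tilde\QQQ_n$ and $\tilde\QQQ_m$) requires the full strength of Lemma \ref{lemma:all-you-want-on-conc}~(5) to match up couplings through intermediate measures, and the sign of $\alpha-1$ is what ultimately allows the extra multiplicity created by overlapping pieces to shrink rather than inflate the energy integrand.
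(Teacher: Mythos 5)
Your overall strategy -- sew a better competitor for one component back into $\PPP$ and contradict the optimality of $\PPP$ -- is the same as the paper's, and the disjointness and energy-decomposition steps are fine. But there are two problems, one of them fatal to the last step. The fatal one: you deduce optimality of $\PPP_n+\PPP_m$ from the inequality $\E(\pi_{\PPP_n}+\pi_{\PPP_m})\geq \E(\pi_{\PPP_n})+\E(\pi_{\PPP_m})$. This is superadditivity of the optimal cost, which is false in branched transportation -- the cost is \emph{sub}additive (grouping mass is cheaper; concretely, for any decomposition $\tilde\QQQ=\tilde\QQQ_n+\tilde\QQQ_m$ one has $|z|_{\tilde\QQQ}\geq\max\{|z|_{\tilde\QQQ_n},|z|_{\tilde\QQQ_m}\}$, hence $\E(\tilde\QQQ)\leq\E(\tilde\QQQ_n)+\E(\tilde\QQQ_m)$, so your derived bound $\E(\tilde\QQQ_n)+\E(\tilde\QQQ_m)\geq\E(\PPP_n)+\E(\PPP_m)$ says nothing about $\E(\tilde\QQQ)$ itself). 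The correct route, and what the paper does, is to run the replacement argument \emph{once more directly for the pair}: take a single optimal competitor $\tilde\QQQ\in\OTP(\pi_{\PPP_n}+\pi_{\PPP_m})$, do not split it, and on the replaced portions estimate the integrand with $|z|_{\tilde\PPP}\geq|z|_{\tilde\QQQ}$, which yields $\E(\tilde\PPP)\leq R+\E(\tilde\QQQ)$ and hence $\E(\PPP_n+\PPP_m)\leq\E(\tilde\QQQ)$ from the optimality of $\PPP$.

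The second, repairable, issue is your ``randomly drawn curve from $\tilde\QQQ_k^{x,y}$''. Lemma \ref{lemma:all-you-want-on-conc} controls only the marginals of a concatenation, not its coupling, so gluing a random middle piece to the two tails of $\gamma$ does not automatically return a plan in $\TP(\pi_\PPP)$: you would need a gluing in the spirit of Lemma \ref{lemma:finiamola}, conditioning simultaneously on $(e_0(\gamma),\gamma(\phi_1(\gamma)),\gamma(\phi_2(\gamma)),e_\infty(\gamma))$, and then re-verify the multiplicity bound $|z|_{\tilde\PPP}\geq|z|_{\tilde\QQQ_k}$ through property (2) of Definition \ref{defn:set-of-conc}. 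The paper sidesteps all of this by choosing the competitor \emph{optimal}, hence enjoying the single path property: the replacement piece between $x$ and $y$ is then a uniquely determined curve $\tilde\gamma^{x,y}$, the whole surgery becomes a deterministic map $\varphi$, and $\tilde\PPP=\varphi_\#\PPP$ has coupling $\pi_\PPP$ for free. Since in the contradiction argument you may always assume the competitor is optimal, you should do the same.
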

The idea of the proof goes as follows. Assume that the traffic plan $\PPP_1$ associated to a connected component $F$ of $\PPP$ is not optimal and let $\tilde \PPP_1$ be an optimizer with the same coupling as $\PPP_1$. Remember that $\PPP$-a.e. curve $\gamma$ intersects $F_1$ for an open interval of times $(\phi_1(\gamma),\phi_2(\gamma))$, plus a set of measure zero. We construct a competitor for $\PPP$ by ``sewing'' the two pieces of each curve $\gamma$ corresponding to the complementary of $(\phi_1(\gamma),\phi_2(\gamma))$ with a new curve having the same end-points as the restriction of $\gamma$ to $(\phi_1(\gamma),\phi_2(\gamma))$. This new curve is the unique connection between $\gamma(\phi_1(\gamma))$ and $\gamma(\phi_2(\gamma))$ in $\tilde \PPP_1$.
\begin{proof}
The traffic plans $\{\PPP_n\}_n$ are disjoint, thanks to Remark \ref{disj}. Since $\{\PPP_n\}_n$ and $\PPP$ are simple path and $\Haus^1(U^c)=0$,
\begin{equation}\label{wow}
\begin{split}
\sum_n\E(\PPP_n)&=\sum_n\MM(\PPP_n)=\sum_n\int_{F_n}|x|_{\PPP_n}^\alpha d\Haus^1(x)\\
&=\sum_n\int_{F_n}|x|_{\PPP}^\alpha d\Haus^1(x)=\int_{U^c \cup (\cup_nF_n)}|x|_{\PPP}^\alpha d\Haus^1(x)=\MM(\PPP)=\E(\PPP).
\end{split}
\end{equation}
We need to show that they are optimal with respect to their coupling.
Assume by contradiction that there exists one of them which is not optimal for the associated coupling. Up to reorder the sequence, we can assume $\PPP_1$ is not optimal. 
%{\color{blue} We assume for the moment that the number of connected components is finite. It is consequently well defined  }
%$\PPP':=\sum_{n>1}\PPP_n.$

We consider the coupling $\pi:=(e_0,e_\infty)_\#\PPP_1$ and we disintegrate $\PPP_1$ with respect to $\pi$:
$$\PPP_1=\pi(x,y)\otimes \PPP_1^{x,y}.$$
We observe that, since $\PPP$ is optimal, then it satisfies the simple path property and the single path property.
This implies that also $\PPP_1$  satisfies the simple path property and the single path property  (even if it is not optimal) and in particular $\E(\PPP_1)=\MM(\PPP_1)$  (see Remark \ref{simple}). From the single path property we also deduce that for $\pi$-a.e. $(x,y)$ there exists $\gamma^{xy} \in \Lip$ such that (up to reparametrization of the curves) 
$ \PPP_1^{x,y}= \delta_{\gamma^{xy}}$.

Since by contradiction $\PPP_1 \not \in \OTP (\pi)$, then there exists $\tilde \PPP_1 \in \OTP (\pi)$ (which without loss of generality can be assumed supported on curves parametrized by arc-length) and by Proposition \ref{massaenergia} we deduce
\begin{equation}\label{ugual}
\MM(\tilde \PPP_1)=\E(\tilde \PPP_1)<\E(\PPP_1)=\MM(\PPP_1).
\end{equation}

Since $\tilde\PPP_1$ is optimal, it satisfies the single path property. Hence, for $\pi$-a.e. $(x,y)$, there exists  $\tilde\gamma^{xy} \in \Lip$ such that $\tilde\gamma^{xx}$ is the curve constantly equals to $x$ and the disintegration of $\tilde\PPP_1$ with respect to $(e_0,e_\infty)$ reads %$ \tilde\PPP_1^{x,y}$ is supported on the curve $$\tilde\gamma^{xy}$
\begin{equation}\label{un}
\tilde\PPP_1=\pi(x,y)\otimes \delta_{\tilde\gamma^{xy}}.
\end{equation}
Let $(\phi_1,\phi_2)$ the map associated to $\PPP_1$ through Lemma \ref{Lemma plan canonico}.  
We consider the following map $\xi: \gamma \in \Lip \to \tilde\gamma^{\gamma(\phi_1(\gamma)),\gamma(\phi_2(\gamma))} \in \Lip$, which is well defined for $\PPP$-a.e. $\gamma$.
Testing \eqref{un} with a general test function and since $\xi({\rm{res_{\phi_1(\cdot),\phi_2(\cdot)}}}(\cdot))= \xi(\cdot)$, we deduce that
\begin{equation}\label{ciaoamici}
\tilde\PPP_1=\xi_{\#}\PPP = \xi_{\#}\PPP_1.
\end{equation}
We build now the following map $\varphi:\Lip \to \Lip$
%if there exists a set $A \subset \rm{Im}(\gamma)\cap F_1$ with $\Haus^1(A)>0$ such that $|x|_{\PPP_1}>0$ for every $x \in A$, then thanks to Lemma \ref{Lemma plan canonico}, we can associate to $\gamma$ the non trivial maximal interval $I(\gamma)=(\phi_1(\gamma),\phi_2(\gamma))$ with respect to $F_1$  and we set $\varphi: \Lip \to \Lip$

\begin{equation*}
[\varphi(\gamma)](t):=
\begin{cases}
\gamma(t) \qquad &\mbox{if } t\leq \phi_1(\gamma),
\\
[\xi(\gamma)](t-\phi_1(\gamma)) \qquad &\mbox{if }  \phi_1(\gamma)<t<T\left(\xi(\gamma)\right)+ \phi_1(\gamma),
\\
\gamma(t+\phi_2(\gamma)- T\left(\xi(\gamma)\right)- \phi_1(\gamma)) \qquad &\mbox{if } t\geq T\left(\xi(\gamma)\right)+ \phi_1(\gamma).
\end{cases}
\end{equation*}
%$$\varphi(\gamma)(t)=\begin{cases}\gamma(t) & \forall t \in \R\setminus I(\gamma)\\
%\tilde\gamma^{\gamma(\phi_1(\gamma)),\gamma(\phi_2(\gamma))}\left (\frac{T\left(\tilde\gamma^{\gamma(\phi_1(\gamma)),\gamma(\phi_2(\gamma))}\right)}{\phi_2(\gamma)-\phi_1(\gamma)}(t-\phi_1(\gamma))\right ) & \forall t \in  I(\gamma)\end{cases}.$$
%%{\color{blue}BISOGNA METTERE LA DEFINIZIONE COMMENTATA PERCHE' QUESTA CURVA CURVA NON STA IN $\Lip$ fatto}
Since $e_0(\gamma)=e_0(\varphi(\gamma))$ and $e_\infty(\gamma)=e_\infty(\varphi(\gamma))$ for every $\gamma \in \Lip$, we deduce that $\tilde \PPP:=\varphi_\#\PPP\in \TP(\pi_{\PPP})$. 
{
%\begin{equation}\label{pippo}
%\pi_{\PPP_1}=(e_0,e_\infty)_{\#}\PPP_1=(e_0,e_\infty)_{\#}(({\rm{res_{\phi_1(\cdot),\phi_2(\cdot)}}})_\sharp \PPP)=(\phi_1(\cdot),\phi_2(\cdot))_\sharp \PPP.
%\end{equation}
 Consequently we can compute for $\Haus^1$-a.e. $z \in \{|z|_{\tilde \PPP_1}>0\}$
\begin{equation}\label{:)eccoci}
\begin{split}
|z|_{\tilde \PPP_1}&=\tilde \PPP_1(\{\eta \in \Lip | z \in Im (\eta)\})= \tilde \PPP_1(\{ \xi(\gamma)\in \Lip : z \in Im ( \xi(\gamma))  \})\\
%&\leq \pi_{\tilde \PPP_1}((e_0,e_\infty)\{ \xi(\gamma)\in \Lip : z \in Im ( \xi(\gamma))  \})\\
%&= \pi_{\PPP_1}(\{(\gamma(\phi_1(\gamma)),\gamma(\phi_2(\gamma))) \in X \times X | z \in Im (\xi(\gamma)) \})\\
%&= \PPP(((\phi_1(\cdot),\phi_2(\cdot)))^{-1}(\{(x,y) \in X \times X |   \exists  \gamma \in \Lip : x=\gamma(\phi_1(\gamma)),  y=\gamma(\phi_2(\gamma)), z \in Im (\xi(\gamma)) \}))\\
&= \PPP_1(\{({\rm{res_{\phi_1(\gamma),\phi_2(\gamma)}}})(\gamma) \in \Lip | z \in Im (\xi(\gamma)) \})\\
&= \PPP({\rm{res_{\phi_1(\cdot),\phi_2(\cdot)}}}^{-1}(\{({\rm{res_{\phi_1(\gamma),\phi_2(\gamma)}}})(\gamma) \in \Lip | z \in Im (\xi(\gamma)) \}))\\
&= \PPP(\{\gamma \in \Lip :   z \in Im (\xi(\gamma)) \}))) \leq \tilde \PPP(\varphi(\{\gamma \in \Lip :   z \in Im (\xi(\gamma)) \}))\\
&\leq \tilde \PPP(\{\varphi(\gamma) \in \Lip | z \in Im (\varphi(\gamma)) \})=\tilde \PPP(\{\eta \in \Lip |  z \in Im (\eta) \})=|z|_{\tilde \PPP},
\end{split}
\end{equation}
where line one is by definition of multiplicity and by the fact that $\tilde \PPP_1$ is supported on $\xi(\Lip)$; line two is by \eqref{ciaoamici} and  $\xi({\rm{res_{\phi_1(\cdot),\phi_2(\cdot)}}}(\cdot))= \xi(\cdot)$; 
line three is due to $\PPP_1:=({\rm{res_{\phi_1(\cdot),\phi_2(\cdot)}}})_\sharp \PPP$;  the inequality in line four is a consequence of $\tilde \PPP= \varphi_\# \PPP$; line five is by the inclusion $Im (\xi(\gamma)) \subseteq Im (\varphi(\gamma)) $.}

Given $n \in \N$, if we denote for every $\gamma \in \Lip$ the maximal interval with respect to $F_n$ as $(\psi_1(\gamma),\psi_2(\gamma))$, we can consequently compute
%\begin{equation}\label{:)arieccoci}
%\begin{split}
%|z|_{ \PPP'}&=\PPP'(\{\gamma \in \Lip | z \in Im (\gamma)\})=\sum_{n>1}\PPP_n(\{\gamma \in \Lip | z \in Im (\gamma)\})\\
%&=\PPP_n(\{\gamma \in \Lip | z \in Im (\gamma)\})\\
%&=  \PPP(\{\eta \in \Lip |  \exists \gamma \in \Lip : \gamma=res_{\psi_1(\eta),\psi_2(\eta)}(\eta), z \in Im (\gamma)  \})\\
%&\leq  \PPP(\{\eta \in \Lip |  \exists \gamma \in \Lip : \gamma=res_{\psi_1(\eta),\psi_2(\eta)}(\eta), z \in Im (\eta)  \})\\
%&\leq  \PPP(\{\eta \in \Lip |   z \in Im (\eta)  \})\\
%&=  \tilde \PPP(\{\eta \in \Lip |   \exists  \gamma \in \Lip : \eta=\varphi(\gamma),  z \in Im (\eta) \})\\
%&=\tilde \PPP(\{\eta \in \Lip |  z \in Im (\eta) \})=|z|_{\tilde \PPP},
%\end{split}
%\end{equation}
\begin{equation}\label{:)arieccoci}
\begin{split}
|z|_{ \PPP_n}&=\PPP_n(\{\gamma \in \Lip | z \in Im (\gamma)\})=  \PPP(res_{\psi_1(\cdot),\psi_2(\cdot)}^{-1}(\{\gamma \in \Lip | z \in Im (\gamma)\}))\\
&\leq  \tilde \PPP(\varphi (res_{\psi_1(\cdot),\psi_2(\cdot)}^{-1}(\{\gamma \in \Lip | z \in Im (\gamma)\})))\leq\tilde \PPP(\{\eta \in \Lip |  z \in Im (\eta) \})=|z|_{\tilde \PPP},
\end{split}
\end{equation}
where the second equality follows by Lemma \ref{Lemma plan canonico} (construction of $\PPP_n=(res_{\psi_1(\cdot),\psi_2(\cdot)})_{\#}(\PPP)$), the first inequality is by definition of $\tilde \PPP=\varphi_\# \PPP$ and the second inequality is by set inclusion, since $\varphi (res_{\psi_1(\cdot),\psi_2(\cdot)}^{-1}(\{\gamma \in \Lip | z \in Im (\gamma)\})) \subset \{\eta \in \Lip |  z \in Im (\eta) \}$.
By Remark \ref{disj}, for $\Haus^1$-a.e. $z$ there exists at most one $n>1$ such that $|z|_{ \PPP_n}>0$. Summarizing this information with \eqref{:)eccoci}, \eqref{:)arieccoci}, we get 
\begin{equation*}
\max\{ |z|_{\tilde \PPP_1}, \sum_{n>1}|z|_{\PPP_n}\} = \max\{ |z|_{\tilde \PPP_1}, \max_{n>1}|z|_{\PPP_n}\} \leq |z|_{\tilde \PPP} , 
\end{equation*}
from which we compute for $\PPP$-a.e. curve $\gamma$
\begin{equation*}
\begin{split}
&\int_0^\infty {|\varphi(\gamma)(t)|_{\tilde \PPP}^{\alpha-1}} \Big|\frac{d}{dt} \varphi(\gamma)(t) \Big| \, dt\\
&=\int_0^\infty {|[\xi(\gamma)](t)|_{\tilde \PPP}^{\alpha-1}}\Big|\frac{d}{dt}[\xi(\gamma)](t) \Big|  \, dt + \int_0^{\phi_1(\gamma)} {|\gamma(t) |_{\tilde \PPP}^{\alpha-1}} |\dot \gamma(t) | \, dt
+
\int_{\phi_2(\gamma)}^\infty {|\gamma(t) |_{\tilde \PPP}^{\alpha-1}} |\dot \gamma(t) | \, dt
\\&{\leq}\int_0^\infty {|[\xi(\gamma)](t)|_{\tilde \PPP_1}^{\alpha-1}} \Big|\frac{d}{dt}[\xi(\gamma)](t) \Big| \, dt
 +
\sum_{n>1}\left(\int_0^{\phi_1(\gamma)} {|\gamma(t) |_{ \PPP_n}^{\alpha-1}} |\dot \gamma(t) | \, dt
+
\int_{\phi_2(\gamma)}^\infty {|\gamma(t) |_{\PPP_n}^{\alpha-1}} |\dot \gamma(t) | \, dt\right)
\\&{\leq}\int_0^\infty {|[\xi(\gamma)](t)|_{\tilde \PPP_1}^{\alpha-1}} \Big|\frac{d}{dt}[\xi(\gamma)](t) \Big| \, dt  +
\sum_{n>1} \int_{\gamma(t) \in F_n} {|\gamma(t) |_{ \PPP_n}^{\alpha-1}} |\dot \gamma(t) | \, dt.
\end{split}
\end{equation*}
%\begin{equation*}
%\begin{split}
%\int_0^\infty {|\varphi(\gamma)(t)|_{\tilde \PPP}^{\alpha-1}} |\frac{d}{dt} \varphi(\gamma)(t) | \, dt
%&\leq\int_0^\infty {|\gamma(t)|_{\tilde \PPP}^{\alpha-1}} |\dot\gamma(t) | \, dt
%+
%\int_0^\infty {|\tilde\gamma(t) |_{\tilde \PPP}^{\alpha-1}} |\frac{d}{dt} \tilde\gamma(t) | \, dt
%\\&\overset{\eqref{ma}}{\leq}\int_0^\infty {|\gamma(t)|_{\tilde \PPP_1}^{\alpha-1}} |\dot\gamma(t) | \, dt
%+
%\int_0^\infty {|\tilde\gamma(t)|_{\PPP'}^{\alpha-1}} |\frac{d}{dt} \tilde\gamma(t) | \, dt
%.
%\end{split}
%\end{equation*}
%{\color{blue} ATTENZIONE CHE IL PRIMO TERMINE DEL RHS PUO' FARE INFINITO: fatto, vedi sopra}

Integrating with respect to $\PPP$, we get that
\begin{equation*}
\begin{split}
\E(\tilde \PPP)  & \, \, \, \,   =\int \int_0^\infty {|\gamma(t)|_{\tilde \PPP}^{\alpha-1}} |\frac{d}{dt}\gamma(t) | \, dtd\tilde \PPP(\gamma)= \int \int_0^\infty {|\varphi(\gamma)(t)|_{\tilde \PPP}^{\alpha-1}} |\frac{d}{dt} \varphi(\gamma)(t) | \, dtd\PPP(\gamma)
\\& \, \, \, \, \, {\leq}\int\int_0^\infty {|[\xi(\gamma)](t)|_{\tilde \PPP_1}^{\alpha-1}} \Big|\frac{d}{dt}[\xi(\gamma)](t) \Big| \, dtd\PPP(\gamma)
+ \sum_{n>1} \int \int_{\gamma(t) \in F_n} {|\gamma(t) |_{ \PPP_n}^{\alpha-1}} |\dot \gamma(t) | \, dtd\PPP(\gamma)
\\
&\overset{\eqref{ciaoamici}}{=} \E(\tilde \PPP_1)+
\sum_{n>1} \E(\PPP_n) < \E(\PPP_1)+ \sum_{n>1} \E(\PPP_n)\overset{\eqref{wow}}{=}\E(\PPP),
\end{split}
\end{equation*}
%\begin{equation}
%\begin{split}\label{eqn:finiamola}
%\E(\tilde \PPP) = &\int \int_0^\infty {|\gamma(t)|_{\tilde \PPP}^{\alpha-1}} |\frac{d}{dt}\gamma(t) | \, dtd\tilde \PPP(\gamma)\\
%=& \int \int_0^\infty {|\varphi(\gamma)(t)|_{\tilde \PPP}^{\alpha-1}} |\frac{d}{dt} \varphi(\gamma)(t) | \, dtd\PPP(\gamma)\\
%\leq 
%& \int \int_0^\infty {|\gamma(t)|_{\tilde \PPP_1}^{\alpha-1}} |\dot\gamma(t) | \, dt d \PPP(\gamma)+
%\int \int_0^\infty {|\tilde\gamma(t)|_{\PPP'}^{\alpha-1}} |\frac{d}{dt} \tilde\gamma(t) | \, dtd\PPP(\gamma)
%\\
%= 
%& \int \int_0^\infty {|\gamma(t)|_{\tilde \PPP_1}^{\alpha-1}} |\dot\gamma(t) | \, dt d\tilde \PPP_1(\gamma)+
%\int \int_0^\infty {|\tilde\gamma(t)|_{\PPP'}^{\alpha-1}} |\frac{d}{dt} \tilde\gamma(t) | \, dtd\PPP'(\gamma)
%\\=&
%\E(\tilde \PPP_1)+
%\E(\PPP') <\E(\PPP_1)+
%\E(\PPP')=\MM(\PPP_1)+
%\MM(\PPP')=\MM(\PPP)=\E(\PPP),
%\end{split}
%\end{equation}
% where the equality in the fourth line is justified by the construction of the traffic plans $\{\PPP_n\}_n$ canonically associated to the connected components $\{F_n\}_n$ through Lemma \ref{Lemma plan canonico}.
The last inequality  contradicts the optimality of $\PPP\in \OTP(\pi_{\PPP})$.
With the same proof, namely replacing $\PPP_1$ with $\PPP_n+ \PPP_m$ in the argument above, one can also show that, for every couple $(n,m) \in \N^2$, the traffic plan $\PPP_n+\PPP_m$ is optimal.
\end{proof}
%Actually, with the  one can prove even more
%\begin{lemma}
%Let $\{\PPP_n\}_n$ be a family of disjoint traffic plans such that $\sum_n |\PPP_n|<+\infty$ and $\PPP=\sum_n \PPP_n$ is optimal. Then for any $J \subset \N$ the sub-traffic plan $\sum_{n \in J} \PPP_n$ is also optimal.
%\end{lemma}
\begin{lemma}\label{secondo}
Let $\alpha>1-\frac{1}{d}$ and $(\PPP_n)_{n=1}^\infty$ a sequence of disjoint traffic plans such that $\sum_{n\in \N}|\PPP_n|<+\infty$. If 
\eqref{eqn:long curves} is in force for some $C>0$,
then there exists $n\in \N$ or a couple $(n,m)\in \N^2$ such that either $\PPP_n$ or $\PPP_n+ \PPP_m$ is not optimal, i.e. either $\PPP_n \not \in \OTP(\pi_{\PPP_n})$ or $\PPP_n+ \PPP_m \not \in \OTP(\pi_{\PPP_n+ \PPP_m})$. \end{lemma}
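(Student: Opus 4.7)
The plan is to argue by contradiction: suppose that every $\PPP_n$ and every pair $\PPP_n+\PPP_m$ is optimal for its coupling, and derive a contradiction from the interplay of disjointness, the long-curve lower bound \eqref{eqn:long curves}, and the mailing-irrigability estimate of Corollary~\ref{cor:small-balls-en-con-assegn}.

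The first move is to combine disjointness with the three optimality assumptions into the additivity identity
\[
\E(\PPP_n+\PPP_m)=\E(\PPP_n)+\E(\PPP_m).
\]
Indeed, Proposition~\ref{massaenergia} gives $\E(\PPP) = \Mass^\alpha(\PPP)$ on optimal plans, and if $\Haus^1 \trace \Gamma_{\PPP_n}$ and $\Haus^1\trace \Gamma_{\PPP_m}$ are mutually singular then $\Mass^\alpha(\PPP_n+\PPP_m) = \Mass^\alpha(\PPP_n) + \Mass^\alpha(\PPP_m)$ follows directly from the definition \eqref{mass}. The second, quantitative move is the lower bound $\E(\PPP_n) \geq C|\PPP_n|^\alpha$, obtained by inserting $|\gamma(t)|_{\PPP_n}\leq|\PPP_n|$ (and using $\alpha-1<0$) in \eqref{energy} and invoking \eqref{eqn:long curves} to bound $\int_0^\infty|\dot\gamma|\,dt \geq C$ for $\PPP_n$-a.e.\ $\gamma$.

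For the contradiction, I would use Corollary~\ref{cor:small-balls-en-con-assegn} to construct an explicit competitor for some pair $\PPP_n+\PPP_m$ whose energy is strictly smaller than $\E(\PPP_n)+\E(\PPP_m)$, contradicting pair optimality. The engine is the strict subadditivity $(|\PPP_n|+|\PPP_m|)^\alpha < |\PPP_n|^\alpha + |\PPP_m|^\alpha$: routing both marginals through a common point $x_0 \in X$ (via Lemma~\ref{lemma:conc}, as already used in the proof of Corollary~\ref{cor:small-balls-en-con-assegn}) lets the combined multiplicity share the ``backbone'' cost, which is cheaper than paying for each separately. To make this cheaper than the sum $\E(\PPP_n)+\E(\PPP_m)$, the Corollary has to be applied to a coupling $\pi_{\PPP_n}+\pi_{\PPP_m}$ whose \emph{support in $X\times X$ has small diameter}. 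This is where the infinite summability $\sum|\PPP_n|<\infty$ and the compactness of $X$ enter: since $|\PPP_n|\to 0$ but infinitely many $\PPP_n$ survive in the compact set $X$, their source and target marginals $\mu^\pm_{\PPP_n}$ must accumulate, so by a covering of $X$ (in the spirit of Lemma~\ref{lemma:meas-cov}) and a pigeonhole argument one extracts a pair $(n,m)$ whose initial \emph{and} final marginals all lie in a common ball $B(x_0,r)$ with $r$ as small as wanted.

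The hardest step is precisely this geometric selection: the diameter of $\supp(\pi_{\PPP_n}+\pi_{\PPP_m})\subset X\times X$ is controlled by the joint spread of both the initial and the final marginals, not merely by the closeness of the trajectories $\Gamma_{\PPP_n}$ and $\Gamma_{\PPP_m}$, so one must ensure simultaneous accumulation on both sides. Once such a pair is singled out, the Corollary provides a traffic plan with energy at most $c(d,\alpha)(|\PPP_n|+|\PPP_m|)^\alpha r$, which by choosing $r$ sufficiently small becomes strictly less than $C(|\PPP_n|^\alpha+|\PPP_m|^\alpha) \leq \E(\PPP_n)+\E(\PPP_m)$, yielding the desired contradiction with pair optimality.
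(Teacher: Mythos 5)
Your two preliminary steps are fine and agree with the paper: disjointness plus Proposition~\ref{massaenergia} gives $\E(\PPP_n+\PPP_m)\geq\MM(\PPP_n)+\MM(\PPP_m)=\E(\PPP_n)+\E(\PPP_m)$, and \eqref{eqn:long curves} gives $\E(\PPP_n)\geq C|\PPP_n|^\alpha$ exactly as in the paper's inequality \eqref{1}. The gap is in the competitor. Your geometric selection step does not work: the hypotheses give $\sum_n|\PPP_n|<\infty$ and mutual singularity of the measures $\Haus^1\trace\Gamma_{\PPP_n}$, but no control whatsoever on $\mathrm{diam}\big(\supp(\pi_{\PPP_n})\big)$. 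The marginals $\mu^\pm_{\PPP_n}$ are measures, and even though their total masses tend to $0$, each support can be spread over all of $X$ (in the intended application the $\PPP_n$ are connected components of an optimal plan, which are typically macroscopic sets). A pigeonhole on a covering of $X$ can only produce a pair whose supports \emph{meet} a common small ball, not a pair whose couplings are \emph{supported in} one; so the hypothesis of Corollary~\ref{cor:small-balls-en-con-assegn} with small $L$ is simply not available, and the ``from scratch'' competitor of energy $c(d,\alpha)(|\PPP_n|+|\PPP_m|)^\alpha r$ cannot be built.

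The missing idea is the one this whole lemma is designed to showcase: the stability theorem. The paper normalizes $\PPP_n':=\PPP_n/|\PPP_n|$, extracts a weak$^*$ limit, and applies Theorem~\ref{thm:good-version} to the normalized couplings $\pi_{\PPP_n'}$ to produce, for $n,m$ large, connections $\PPP^1_{n,m},\PPP^2_{n,m}$ of energy $\leq\e$ and a concatenation realizing $\pi_{\PPP_n'}$ through the \emph{existing} plan $\PPP_m'$. The competitor for $\PPP_n+\PPP_m$ is then $\PPP''|\PPP_n|+\PPP_m'|\PPP_m|$: the small plan's mass is re-routed along the backbone of $\PPP_m$, so by concavity its marginal cost is about $\alpha\frac{|\PPP_n|}{|\PPP_m|}\E(\PPP_m)+2\e|\PPP_n|^\alpha$, whereas (using Theorem~\ref{thm:main} to see that $\E(\PPP_n')$ is Cauchy) the actual cost of $\PPP_n$ is about $\frac{|\PPP_n|^\alpha}{|\PPP_m|^\alpha}\E(\PPP_m)$; since $t\ll t^\alpha$ as $t\to0$, taking $n\gg m$ gives the strict improvement. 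Note that this mechanism beats $\E(\PPP_n)+\E(\PPP_m)$ only by exploiting that one of the two plans is asymptotically negligible relative to the other --- no construction bounded solely in terms of total masses and a diameter could do so for two comparable plans. So the proposal is not repairable along the lines you sketch; you need the stability theorem.
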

\begin{proof} Without loss of generality, we can assume that all traffic plans are supported on curves parametrized by arc-length. {Indeed, assume we are able to prove Lemma \ref{secondo} in case the elements of the sequence are supported on curves parametrized by arc-length. If $(\PPP_n)_{n=1}^\infty$ does not satisfy this assumption, since \eqref{eqn:long curves} is invariant under reparametrization, we can apply Lemma \ref{secondo} to a sequence $(\tilde \PPP_n)_{n=1}^\infty$, obtained reparametrizing by arc-length the curves on which the elements of $(\PPP_n)_{n=1}^\infty$ are supported. Then there exists $n\in \N$ or a couple $(n,m)\in \N^2$ such that either $\tilde \PPP_n \not \in \OTP(\pi_{\tilde \PPP_n})$ or $\tilde \PPP_n+ \tilde \PPP_m \not \in \OTP(\pi_{\tilde \PPP_n+ \tilde \PPP_m})$. But, since 
$$\E(\tilde \PPP_n)=\E( \PPP_n), \quad \E(\tilde \PPP_n+ \tilde \PPP_m)=\E( \PPP_n + \PPP_m), \quad \pi_{\tilde \PPP_n}=\pi_{\PPP_n}, \quad \pi_{\tilde \PPP_n+ \tilde \PPP_m}=\pi_{\PPP_n+ \PPP_m},$$
then  either $\PPP_n \not \in \OTP(\pi_{\PPP_n})$ or $\PPP_n+ \PPP_m \not \in \OTP(\pi_{\PPP_n+ \PPP_m})$.}

Now we prove Lemma \ref{secondo} in case the elements of $(\PPP_n)_{n=1}^\infty$ are supported on curves parametrized by arc-length. If there exists $n\in \N$ such that  $\PPP_n$ is not optimal, the statement is true; we can consequently assume that all $\PPP_n$ are optimal.
We first renormalize every plan $\PPP_n$, defining
\begin{equation}\label{defren}
\PPP_n':=\frac{\PPP_n}{|\PPP_n|},
\end{equation}
which is an optimal traffic plan of unit mass supported on curves parametrized by arc-length. Moreover, by Remark \ref{semtpc}, there exists $C>0$ such that $\PPP_n' \in \TP_C$ for every $n \in \N$. %, just reparametrizing the curves by arc length (this does not change the mass or the energy of $\PPP_n'$). 
%Indeed, thank to the arc length parametrization, we can compute
%$$\int_{\Lip} T(\gamma)\, d\PPP=\int_{\Lip} l(\gamma)\, d\PPP\leq \E(\PPP) \leq  c(d,\alpha) |\PPP_n|^\alpha L$$ \ref{cor:small-balls-en-con-assegn}
Up to extracting a subsequence, there exists $\PPP'\in \PP(\Lip)$ such that
$$\PPP_n'\rightharpoonup \PPP'.$$
Taking the push-forward through the maps $(e_0,e_\infty)$, $e_0$ and $e_\infty$, we deduce respectively that
$\pi_{\PPP_n'}=\frac{\pi_{\PPP_n}}{|\PPP_n|}\rightharpoonup \pi_{\PPP'}$ and $\mu^\pm_{\PPP_n'}=\frac{\mu^\pm_{\PPP_n}}{|\PPP_n|}\rightharpoonup \mu^\pm_{\PPP'}$.
We can apply Theorem \ref{thm:good-version} to deduce that, for every $\e>0$, there exists $n_0\in \N$ such that for every $n,m\geq n_0$, there exist two traffic plans $\PPP^1_{n,m} \in \TP(\mu^-_{\PPP_n'}, \mu^-_{\PPP_m'})$, $\PPP^2_{n,m}\in \TP(\mu^+_{\PPP_m'}, \mu^+_{\PPP_n'})$ such that
\begin{equation}\label{111}	
	 \E (\PPP^1_{n,m}) \leq \e, \qquad \E (\PPP^2_{n,m}) \leq \e
	 \end{equation}
	and there exists a traffic plan obtained as a concatenation between $\PPP_m'$ and $\PPP^2_{n,m}$, and a further concatenation $\PPP''$ between $\PPP^1_{n,m}$ and the latter concatenation such that $\PPP'' \in \TP(\pi_{\PPP_n'})$.
	
We claim that for $n$ large enough, the following estimates hold
\begin{equation}\label{1}
\E(\PPP_n)\geq C|\PPP_n|^\alpha
\end{equation}
and for every $\e>0$ to be chosen later, up to choose $n$ and $m$ even larger (depending on $\e$)
\begin{equation}\label{2}
\E(\PPP_n)\geq \frac{|\PPP_n|^\alpha}{|\PPP_m|^\alpha}\E(\PPP_m)-\e|\PPP_n|^\alpha.
\end{equation}
%	Inequality \eqref{1} easily follows by Theorem \ref{thm:main}.
Indeed, since by assumption the traffic plans $\PPP_n$ are optimal, then $\PPP_n'\in \OTP(\pi_{\PPP_n'})$. Moreover, thanks to \eqref{eqn:long curves} for every $n \in \N$ we have that $\PPP_n$-a.e. (and hence $\PPP'_n$-a.e.) $\gamma$ has length uniformly bounded from below by $C$. 
	%where the last inequality is due to the hypothesis {\color{blue} \eqref{eqn:long curves}. Indeed, %as observed in Remark \ref{energpos},
	 %This property is invariant by the rescaling \eqref{defren}, and consequently for every $n \in \N$ we have that $\PPP_n'$-a.e. $\gamma$ has length uniformly bounded from below by $C$.
%Passing to the limit for $n \to \infty$ we deduce that $\PPP'$-a.e. $\gamma$ has length uniformly bounded from below by $C$. Consequently $\E(\PPP')>0$.}
We compute
$$ \E(\PPP_n')=\int_{\Lip} \int_{\R^+} |\gamma( t)|_{\PPP_n'}^{\alpha-1}|\dot \gamma(t)| dt\, d\PPP_n'(\gamma)\geq \liminf_{n\to \infty}\int_{\Lip} T(\gamma)\, d\PPP_n'(\gamma) \geq C
, \qquad \forall n \in \N,$$
 and renormalizing the traffic plans we deduce $\E(\PPP_n)=\E(\PPP_n')|\PPP_n|^\alpha \geq C|\PPP_n|^\alpha$, namely \eqref{1}.
In order to prove \eqref{2}, we observe that Theorem \ref{thm:main} implies that the limit $\PPP'$ of the optimal sequence $(\PPP_n')_{n=1}^\infty$ is optimal and that $ \E(\PPP') = \lim_{n\to \infty}\E(\PPP_n')$. In particular $\E(\PPP_n')$ is a Cauchy sequence, that is for every $\e>0$ we have 
\begin{equation}\label{3}
\e> |\E(\PPP_n') - \E(\PPP_m')| = \left| \frac{\E(\PPP_n)}{|\PPP_n|^\alpha}-\frac{\E(\PPP_m)}{|\PPP_m|^\alpha}\right|,
\end{equation}
for every $m, n$ sufficiently large (the latter equality follows by the definition \eqref{defren} of $\PPP_n'$). % , then \eqref{3} implies that 
%$$\left| \E(\PPP_n)-\frac{|\PPP_n|^\alpha}{|\PPP_m|^\alpha}\E(\PPP_m)\right|=\e_0(n, m)|\PPP_n|^\alpha,$$
This gives in particular \eqref{2}.

Now we prove that $\PPP_n+ \PPP_m$ is not optimal, because we can construct a better competitor% in $\TP(\pi_{\PPP_n+ \PPP_m})$, that is
$$\PPP''|\PPP_n|+\PPP'_m|\PPP_m| \in \TP(\pi_{\PPP_n+ \PPP_m}).$$
Indeed, by Lemma \ref{lemma:all-you-want-on-conc} (we think the traffic plan as a concatenation between $\PPP^1_{n,m}$, $(|\PPP_n|+|\PPP_m|)\PPP'_m$ and $\PPP^2_{n,m}$), we can estimate its energy as follows:
\begin{equation}\label{prima}
\begin{split}
\E(\PPP''|\PPP_n|+\PPP'_m|\PPP_m|)&\leq \E(\PPP^1_{n,m}|\PPP_n|)+\E(\PPP^2_{n,m}|\PPP_n|)+\E((|\PPP_n|+|\PPP_m|)\PPP'_m)\\
&\overset{\eqref{111}}{\leq} 2\e|\PPP_n|^\alpha+\E\left (\left (1+\frac{|\PPP_n|}{|\PPP_m|}\right )\PPP_m\right )\\
%&=2\e|\PPP_n|^\alpha+\E(\PPP_m)\frac{(|\PPP_n|+|\PPP_m|)^\alpha}{|\PPP_m|^\alpha}\\
&= 2\e|\PPP_n|^\alpha+\E(\PPP_m)\left (1+\frac{|\PPP_n|}{|\PPP_m|}\right )^\alpha\\
&\leq 2\e|\PPP_n|^\alpha+\E(\PPP_m)\left (1+\alpha \frac{|\PPP_n|}{|\PPP_m|}\right ).
\end{split}
\end{equation}
On the other hand, since $\PPP_n$ and $\PPP_m$ are disjoint and optimal traffic plans, by Proposition \ref{massaenergia} we compute 
\begin{equation}\label{seconda}
\begin{split}
\E(\PPP_n+\PPP_m)&\geq \MM(\PPP_n+\PPP_m)= \MM(\PPP_n)+\MM(\PPP_m)=\E(\PPP_n)+\E(\PPP_m)\\
&\overset{\eqref{2}}{\geq} \E(\PPP_m)+\frac{|\PPP_n|^\alpha}{|\PPP_m|^\alpha}\E(\PPP_m)-\e|\PPP_n|^\alpha.
\end{split}
\end{equation}
Combining the previous inequalities \eqref{prima} and \eqref{seconda} we have
\begin{equation}\label{final-ole}
\begin{split}
\E(\PPP''&|\PPP_n|+\PPP'_m|\PPP_m|)-\E(\PPP_n+\PPP_m)\leq  \left(\alpha \frac{|\PPP_n|}{|\PPP_m|}-\frac{|\PPP_n|^\alpha}{|\PPP_m|^\alpha}\right )\E(\PPP_m)+3\e|\PPP_n|^\alpha\\
&\overset{\eqref{1}}{\leq} \left(\alpha \frac{|\PPP_n|}{|\PPP_m|}-\frac{|\PPP_n|^\alpha}{|\PPP_m|^\alpha}\right)\E(\PPP_m)+\frac 3C\e\frac{|\PPP_n|^\alpha}{|\PPP_m|^\alpha}\E(\PPP_m)\\
&\leq \left(\alpha \frac{|\PPP_n|}{|\PPP_m|}-\left (1-\frac 3C\e \right)\frac{|\PPP_n|^\alpha}{|\PPP_m|^\alpha}\right)\E(\PPP_m)<0,
\end{split}
\end{equation}
where in the last inequality we chose $\varepsilon$ small enough to get $1-\frac 3C\e>\alpha$ and $n$ bigger enough than $m$ in order to guarantee that $\frac{|\PPP_n|}{|\PPP_m|}<1$ and consequently $\frac{|\PPP_n|}{|\PPP_m|}<\frac{|\PPP_n|^\alpha}{|\PPP_m|^\alpha}$.
We deduce from inequality \eqref{final-ole} that $\PPP_n+ \PPP_m$ is not optimal.
\end{proof}

\begin{proof}[Proof of Theorem~\ref{rego2}]
Assume by contradiction that  the sequence $(\PPP_n)_{n=1}^\infty$ of the traffic plans canonically associated to the connected components of $\PPP$ in $\R^d \setminus \{x\}$ (or in $\R^d$) as shown in Lemma \ref{Lemma plan canonico} is infinite. Since \eqref{eqn:long curves} is in force, we are in the hypothesis to apply Lemma \ref{secondo}, hence there exists $n\in \N$ or a couple $(n,m)\in \N^2$ such that $\PPP_n$ or $\PPP_n+ \PPP_m$ is not optimal. On the other hand, by Lemma \ref{primo}, we know that this is not possible. 
\end{proof}

\bibliography{references3}
\bibliographystyle{is-alpha}

%%%%%%%%%%%%%%%%%%%%%%%%%%%%%%%%%%%%%%%%%%%%%%%%%%%%%%%%%%%%%%%%%
%
%	AFFILIATIONS
%
%%%%%%%%%%%%%%%%%%%%%%%%%%%%%%%%%%%%%%%%%%%%%%%%%%%%%%%%%%%%%%%%%

\vskip .5 cm

{\parindent = 0 pt\begin{footnotesize}

M.C. 
\\
Institute for Theoretical Studies, ETH Z\"urich, Clausiusstrasse 47, CH-8092 Z\"urich, Switzerland
\\
e-mail M.C.: {\tt maria.colombo@eth-its.ethz.ch}
\\
~
\\
A.D.R.
\\
Courant Institute of Mathematical Sciences, New York University, New York, NY, USA
\\
e-mail A.D.R.: {\tt derosa@cims.nyu.edu}
\\
~
\\
A.M.
\\
Institut f\"ur Mathematik, Universit\"at Z\"urich, Z\"urich, Switzerland
\\
e-mail A.M.: {\tt andrea.marchese@math.uzh.ch}
\end{footnotesize}
}

\end{document}